\newtheorem{theorem}{Theorem}[section]
\newtheorem{lemma}[theorem]{Lemma}
\newtheorem{proposition}[theorem]{Proposition}
\newtheorem{corollary}[theorem]{Corollary}
\theoremstyle{definition}
\newtheorem{definition}[theorem]{Definition}
\newtheorem{example}[theorem]{Example}
\theoremstyle{remark}
\newtheorem{remark}[theorem]{Remark}
\numberwithin{equation}{section}
\begin{document}
	\setcounter{page}{1}
	\title[Controlled continuous $\ast$-$g$-Frames in Hilbert $C^{\ast}$-Modules]{Controlled continuous $\ast$-$g$-Frames in Hilbert $C^{\ast}$-Modules}
	
	\author[ M. Ghiati, M. Rossafi, M. Mouniane, H. Labrigui, A. Touri]{M'hamed Ghiati$^{1*}$, Mohamed Rossafi$^{2}$,  Mohammed Mouniane$^{3}$, Hatim Labrigui$^{4}$ and Abdeslam Touri$^{5}$}
	
	\address{$^{1}$	Laboratory of Analysis, Geometry and Application - LAGA, Department of Mathematics, Faculty of Sciences, University of Ibn Tofail, P. O.  Box 133 Kenitra, Morocco}
	\email{mhamed.ghiati@uit.ac.ma}

	\address{$^{2}$LaSMA Laboratory, Department of Mathematics, Faculty of Sciences Dhar El Mahraz, University Sidi Mohamed Ben Abdellah, P. O.  Box 1796 Fez Atlas, Morocco}
	\email{rossafimohamed@gmail.com}
	
	\address{$^{3}$	Laboratory of Analysis, Geometry and Application - LAGA, Department of Mathematics, Faculty of Sciences, University of Ibn Tofail, P. O.  Box 133 Kenitra, Morocco}
	\email{mouniane.mohammed@uit.ac.ma}

	\address{$^{4}$	 Laboratory Partial Differential Equations, Spectral Algebra and Geometry, Department of Mathematics, Faculty of Sciences, University of Ibn Tofail, P. O.  Box 133 Kenitra, Morocco}
	\email{hlabrigui75@gmail.com}

	\address{$^{5}$	 Laboratory Partial Differential Equations, Spectral Algebra and Geometry, Department of Mathematics, Faculty of Sciences, University of Ibn Tofail, P. O.  Box 133 Kenitra, Morocco}
	\email{touri.abdo68@gmail.com}
%
%
%
%
%
%

	\subjclass[2010]{41A58,  42C15, 46L05.}
	
	\keywords{Continuous frames; continuous $\ast$-$g$-frames; Controlled continuous $\ast$-$g$-frames; $C^{\ast}$-algebra; Hilbert $C^{\ast}$-modules.\\$^*$Corresponding author: M'hamed Ghiati (mhamed.ghiati@uit.ac.ma).}

	\begin{abstract}
	The frame theory is dynamic and exciting with various pure and applied mathematics applications. In this paper, we introduce and study the concept of Controlled  Continuous $\ast$-$g$-Frames in Hilbert $C^{\ast}$-Modules, which is a generalization of discrete controlled $\ast$-$g$-Frames in Hilbert $C^{\ast}$-Modules. Also, we give some properties.
	\end{abstract} 
\maketitle
	
	\baselineskip=15pt
	\section{Introduction and preliminaries}\label{section1}
	Duffin and Schaeffer introduced the concept of frames in Hilbert spaces  \cite{Duf} in 1952 to study some severe problems in the nonharmonic Fourier series. After the fundamental paper \cite{13} by Daubechies, Grossman and Meyer, frames theory began to be widely used, particularly in the more specialized context of wavelet frames and Gabor frames \cite{Gab}.\\
	Hilbert $C^{\ast}$-module arose as generalizations of the notion of Hilbert space. The basic idea was to consider modules over $C^{\ast}$-algebras instead of linear spaces and to allow the inner product to take values in the $C^{\ast}$-algebras (See  \cite{Lance1995,Pas}).\\
	Continuous frames are defined by Ali, Antoine, and Gazeau \cite{STAJP}. Gabardo and Han in \cite{14} called these kinds of frames, frames associated with measurable spaces.\\
	The theory of frames has been extended from Hilbert spaces to Hilbert  $C^{\ast}$-modules. For more details see   \cite{Ghiati2022, Frank,Ghiati2021, mjpaaa,MR2,ARAN,MR1,r20}.\\
	In the following, ${U}$ is  Hilbert $C^{*}$-module, $End^{*}_{\mathcal{A}}({U},{V})$ is the set of all adjointable  operators from ${U}$ into ${V}$ and $End^{*}_{\mathcal{A}}({U},{U})$ is abbreviated to $End^{*}_{\mathcal{A}}({U})$ ,  $\mathcal{GL}(U)$ is the set of all bounded linear operators which have bounded inverses and $\mathcal{GL^{+}}(U)$ is the set of all positive operators in $\mathcal{GL}(U)$. 
	The operators $\mathcal{C,C'}\in\mathcal{GL^{+}}(U)$,  and $ \Lambda:=\{  \Lambda_{w}\in{End^{*}_{\mathcal{A}}({U},{V}_{w})} , w\in \Omega \}$ is a sequence of bounded operators.\\
	We introduce the notion of Controlled Continuous $\ast$-$g$-Frame in Hilbert $C^{\ast}$-Modules, which is a generalization of discrete controlled $\ast$-$g$-Frames in Hilbert $C^{\ast}$-Modules given by Zahra Ahmadi Moosavi and Akbar Nazari \cite{zahra}, and we establish some new results.\\
	The paper is organized as follows; we continue this introductory section by  recalling briefly the definitions and basic properties of $C^{\ast}$-algebra, Hilbert $C^{\ast}$-modules. Our reference for $C^{\ast}$-algebras is \cite{Dav,Con}.\\
	In Section \ref{section2}, we introduce some properties of continuous ${\ast}$-$g$-frame. In Section \ref{section3}, we discuss the controlled continuous ${\ast}$-$g$-frame in Hilbert $C^{\ast}$-module. The Duality of continuous ${\ast}$-$g$-frame is considered in Section \ref{section4}. In Section \ref{section5}, the stability problem for continuous ${\ast}$-$g$-frame in Hilbert $C^{\ast}$-module is treated. The last section is consecrated for some properties of $(C,C^{'})$-controlled continuous $\ast$-$g$-frames.
	\begin{definition}\cite{Con}.	
Let $ \mathcal{A} $ be a unital $C^{\ast}$-algebra and ${U}$ be a left $ \mathcal{A} $-module, such that the linear structures of $\mathcal{A}$ and $ {U} $ are compatible. ${U}$ is a pre-Hilbert $\mathcal{A}$-module if ${U}$ is equipped with an $\mathcal{A}$-valued inner product $\langle.,.\rangle_{\mathcal{A}} :{U}\times{U}\rightarrow\mathcal{A}$, such that is sesquilinear, positive definite and respects the module action. In other words,
		\begin{itemize}
			\item [(i)] $ \langle x,x\rangle_{\mathcal{A}}\geq0 $ for all $ x\in{U} $ and $ \langle x,x\rangle_{\mathcal{A}}=0$ if and only if $x=0$.
			\item [(ii)] $\langle ax+y,z\rangle_{\mathcal{A}}=a\langle x,z\rangle_{\mathcal{A}}+\langle y,z\rangle_{\mathcal{A}}$ for all $a\in\mathcal{A}$ and $x,y,z\in{U}$.
			\item[(iii)] $ \langle x,y\rangle_{\mathcal{A}}=\langle y,x\rangle_{\mathcal{A}}^{\ast} $ for all $x,y\in{U}$.
		\end{itemize}	 
		For $x\in{U}, $ we define $||x||=||\langle x,x\rangle_{\mathcal{A}}||^{\frac{1}{2}}$. If ${U}$ is complete with $||.||$, it is called a Hilbert $\mathcal{A}$-module or a Hilbert $C^{\ast}$-module over $\mathcal{A}$. For every $a$ in $C^{\ast}$-algebra $\mathcal{A}$, we have $|a|=(a^{\ast}a)^{\frac{1}{2}}$ and the $\mathcal{A}$-valued norm on ${U}$ is defined by $|x|=\langle x, x\rangle_{\mathcal{A}}^{\frac{1}{2}}$ for all $x\in{U}$.\\

		Let ${U}$ and ${V}$ be two Hilbert $\mathcal{A}$-modules, A map $T:{U}\rightarrow{V}$ is said to be adjointable if there exists a map $T^{\ast}:{V}\rightarrow{U}$ such that $\langle Tx,y\rangle_{\mathcal{A}}=\langle x,T^{\ast}y\rangle_{\mathcal{A}}$ for all $x\in{U}$ and $y\in{V}$.
		
		We reserve the notation $End_{\mathcal{A}}^{\ast}({U},{V})$ for the set of all adjointable operators from ${U}$ to ${V}$ and $End_{\mathcal{A}}^{\ast}({U},{U})$ is abbreviated to $End_{\mathcal{A}}^{\ast}({U})$.	
	\end{definition}
	The following lemmas will be used to prove our main results.
	\begin{lemma} \label{l1} \cite{Pas}.
		Let  ${U}$ be Hilbert $\mathcal{A}$-module. If $T\in End_{\mathcal{A}}^{\ast}({U})$, then $$\langle Tx,Tx\rangle\leq\|T\|^{2}\langle x,x\rangle \qquad \forall x\in{U}.$$
	\end{lemma}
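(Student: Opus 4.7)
The plan is to reduce the pointwise inequality to a positivity statement in the $C^{\ast}$-algebra $End^{\ast}_{\mathcal{A}}(U)$ and then pass from positivity of an operator to positivity of its associated $\mathcal{A}$-valued quadratic form.

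First, I would rewrite the left-hand side using the adjoint: since $T\in End^{\ast}_{\mathcal{A}}(U)$, we have
\[
\langle Tx, Tx\rangle_{\mathcal{A}} = \langle T^{\ast}Tx, x\rangle_{\mathcal{A}} \qquad \forall x\in U.
\]
Second, I would invoke the fact that $End^{\ast}_{\mathcal{A}}(U)$ is a unital $C^{\ast}$-algebra and that $T^{\ast}T$ is a positive element, with $\|T^{\ast}T\|=\|T\|^{2}$ by the $C^{\ast}$-identity. For any positive element $S$ of a unital $C^{\ast}$-algebra one has $\sigma(S)\subseteq[0,\|S\|]$, and hence $\|S\|\,\mathrm{id}_{U}-S\geq 0$ in the $C^{\ast}$-algebra. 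Applying this with $S=T^{\ast}T$ gives that $\|T\|^{2}\,\mathrm{id}_{U}-T^{\ast}T$ is a positive adjointable operator.

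Third, I would convert this operator-level positivity into the inner-product inequality. Because a positive element of $End^{\ast}_{\mathcal{A}}(U)$ factors as $R^{\ast}R$ for some $R\in End^{\ast}_{\mathcal{A}}(U)$, one obtains
\[
\big\langle (\|T\|^{2}\,\mathrm{id}_{U}-T^{\ast}T)x,\, x\big\rangle_{\mathcal{A}} = \langle Rx,Rx\rangle_{\mathcal{A}}\geq 0,
\]
that is, $\langle T^{\ast}Tx,x\rangle_{\mathcal{A}}\leq \|T\|^{2}\langle x,x\rangle_{\mathcal{A}}$. Combining with the first step yields the claim.

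The main obstacle here is purely conceptual: one must be sure to distinguish positivity of an operator in the $C^{\ast}$-algebra $End^{\ast}_{\mathcal{A}}(U)$ from positivity of the $\mathcal{A}$-valued expression $\langle Sx,x\rangle_{\mathcal{A}}$ in $\mathcal{A}$, and to justify the passage from the former to the latter via the factorization $S=R^{\ast}R$. Once this standard $C^{\ast}$-module fact is in hand, the rest of the argument is a short, formal computation and there is no analytic difficulty.
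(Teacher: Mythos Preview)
Your proof is correct and follows the standard route: rewrite $\langle Tx,Tx\rangle$ as $\langle T^{\ast}Tx,x\rangle$, use that $\|T\|^{2}\,\mathrm{id}_{U}-T^{\ast}T\geq 0$ in the $C^{\ast}$-algebra $End^{\ast}_{\mathcal{A}}(U)$, and then pass to the $\mathcal{A}$-valued quadratic form via a square-root factorization. Note, however, that the paper does not supply its own proof of this lemma; it simply quotes the result from Paschke \cite{Pas}, so there is nothing in the paper to compare your argument against beyond confirming that your reasoning is the classical one.
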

	
	\begin{lemma} \label{l2} \cite{Ara}.
		Let ${U}$ and ${V}$ two Hilbert $\mathcal{A}$-modules and $T\in End^{\ast}({U},{V})$. Then the following statements are equivalent
		\begin{itemize}
			\item [(i)] $T$ is surjective.
			\item [(ii)] $T^{\ast}$ is bounded below with respect to norm, i.e., there is $m>0$ such that $ m\|x\| \leq \|T^{\ast}x\|$ for all $x\in{V}$.
			\item [(iii)] $T^{\ast}$ is bounded below with respect to the inner product, i.e., there is $m'>0$ such that $m'\langle x,x\rangle \leq \langle T^{\ast}x,T^{\ast}x\rangle $ for all $x\in{V}$.
		\end{itemize}
	\end{lemma}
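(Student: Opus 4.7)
The plan is to establish the cyclic chain (iii) $\Rightarrow$ (ii) $\Rightarrow$ (i) $\Rightarrow$ (iii). The first two implications are short; the last carries the main structural content.

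For (iii) $\Rightarrow$ (ii), I would apply the $C^{*}$-norm to both sides of the inequality between the positive elements $m'\langle x,x\rangle$ and $\langle T^{*}x,T^{*}x\rangle$ of $\mathcal{A}$. Since the norm is monotone on the positive cone, this yields $m'\|x\|^{2}\leq\|T^{*}x\|^{2}$, giving (ii) with constant $m=\sqrt{m'}$.

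For (ii) $\Rightarrow$ (i), the hypothesis forces $T^{*}$ to be injective with closed range: any Cauchy sequence in the range pulls back to a Cauchy sequence in $V$ by the lower bound, and the bound rules out a nontrivial kernel. Invoking the closed-range theorem for adjointable operators on Hilbert $C^{*}$-modules, $T$ also has closed range, and its image is orthogonally complemented with $V=\mathrm{Ran}(T)\oplus\ker(T^{*})$. Since $\ker(T^{*})=\{0\}$, this gives $\mathrm{Ran}(T)=V$.

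For (i) $\Rightarrow$ (iii), I would show that $TT^{*}\in End^{*}_{\mathcal{A}}(V)$ is invertible. Surjectivity of $T$ gives $\ker(T^{*})=\mathrm{Ran}(T)^{\perp}=\{0\}$ and (via the closed-range theorem again) closed range for $T^{*}$, whence $U=\mathrm{Ran}(T^{*})\oplus\ker(T)$; from this decomposition a short chase shows $TT^{*}$ is surjective, while $\langle TT^{*}y,y\rangle=\langle T^{*}y,T^{*}y\rangle=0$ forces $T^{*}y=0$ and then $y=0$, so $TT^{*}$ is also injective. Being bijective and adjointable, $TT^{*}$ is therefore invertible in $End^{*}_{\mathcal{A}}(V)$. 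Since $TT^{*}$ is positive and invertible in the unital $C^{*}$-algebra $End^{*}_{\mathcal{A}}(V)$, its spectrum lies in $[m',\infty)$ with $m'=\|(TT^{*})^{-1}\|^{-1}>0$, and continuous functional calculus yields $TT^{*}\geq m'\cdot I_{V}$. Hence
\[
\langle T^{*}x,T^{*}x\rangle=\langle TT^{*}x,x\rangle\geq m'\langle x,x\rangle,
\]
which is (iii). The principal obstacle is the invertibility of $TT^{*}$ in (i) $\Rightarrow$ (iii); this rests on the Hilbert $C^{*}$-module versions of the closed-range theorem and the orthogonal-complement decomposition for adjointable operators with closed range, which are more delicate than their Hilbert-space counterparts but well established in the Lance--Paschke framework.
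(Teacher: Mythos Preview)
The paper does not prove this lemma; it is quoted as a preliminary result from Aramba\v{s}i\'c \cite{Ara} and used as a black box throughout. So there is no in-paper proof to compare against.

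Your argument is sound. The implications (iii) $\Rightarrow$ (ii) and (ii) $\Rightarrow$ (i) are handled correctly, the latter resting on the closed-range theorem for adjointable operators (ranges of $T$ and $T^{*}$ are simultaneously closed, and closed range implies the orthogonal decomposition $V=\mathrm{Ran}(T)\oplus\ker(T^{*})$). For (i) $\Rightarrow$ (iii) your route through the invertibility of $TT^{*}$ is the standard one and matches the spirit of Lemma~\ref{3} in the paper; the only point worth making explicit is that a bijective adjointable operator on a Hilbert $C^{*}$-module has an adjointable inverse (so that $TT^{*}$ is genuinely invertible in $End^{*}_{\mathcal{A}}(V)$ and the spectral lower bound $TT^{*}\geq \|(TT^{*})^{-1}\|^{-1}I_{V}$ is available). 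With that caveat noted, the chain closes and the proof is complete.
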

	\begin{lemma} \label{3} \cite{Deh}.
		Let ${U}$ and ${V}$ two Hilbert $\mathcal{A}$-modules and $T\in End^{\ast}({U},{V})$. Then
		\begin{itemize}
			\item [(i)] If $T$ is injective and $T$ has closed range, then the adjointable map $T^{\ast}T$ is invertible and $$\|(T^{\ast}T)^{-1}\|^{-1}\leq T^{\ast}T\leq\|T\|^{2}.$$
			\item  [(ii)]	If $T$ is surjective, then the adjointable map $TT^{\ast}$ is invertible and $$\|(TT^{\ast})^{-1}\|^{-1}\leq TT^{\ast}\leq\|T\|^{2}.$$
		\end{itemize}	
	\end{lemma}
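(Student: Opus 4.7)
The plan is to deduce both parts from Lemmas \ref{l1} and \ref{l2}, handling the upper bounds and the invertibility/lower bounds separately.

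For the upper bound, Lemma \ref{l1} gives at once that for every $x\in U$,
$$\langle T^{*}Tx,x\rangle = \langle Tx,Tx\rangle \leq \|T\|^{2}\langle x,x\rangle,$$
so $T^{*}T \leq \|T\|^{2}$ as positive operators, and the identical computation (using $\|T^{*}\|=\|T\|$) yields $TT^{*}\leq \|T\|^{2}$.

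Part (ii) then follows quickly from Lemma \ref{l2}(iii): surjectivity of $T$ produces some $m'>0$ with $m'\langle y,y\rangle \leq \langle T^{*}y,T^{*}y\rangle = \langle TT^{*}y,y\rangle$ for all $y\in V$, so $TT^{*}\geq m'I$, which forces $TT^{*}$ to be invertible. To sharpen the lower bound to $\|(TT^{*})^{-1}\|^{-1}$, I would observe that $(TT^{*})^{-1}$ is itself a positive element, so $(TT^{*})^{-1} \leq \|(TT^{*})^{-1}\|\,I$; inverting this estimate via the operator-decreasing nature of $S\mapsto S^{-1}$ on positive invertibles gives $TT^{*} \geq \|(TT^{*})^{-1}\|^{-1}I$, as required.

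For part (i), I would reduce to the setting of (ii) by swapping $T$ with $T^{*}$. Since $T$ is an adjointable map with closed range in a Hilbert $C^{*}$-module, so is $T^{*}$, and one has the orthogonal decomposition $U = \ker(T) \oplus \operatorname{Ran}(T^{*})$; combined with $\ker(T) = \{0\}$ this forces $T^{*}$ to be surjective onto $U$. Lemma \ref{l2}(iii) applied to $T^{*}$ then yields $m'>0$ with $m'\langle x,x\rangle \leq \langle Tx,Tx\rangle = \langle T^{*}Tx,x\rangle$ for every $x\in U$, so $T^{*}T\geq m'I$ is invertible, and the same operator-monotonicity step as above closes the estimate. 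The main obstacle will be justifying this orthogonal decomposition for a closed-range adjointable map, which is not automatic in the Hilbert $C^{*}$-module setting but relies on the nontrivial structural fact that closed-range adjointable maps have orthogonally complemented kernel and range; once this is invoked, both parts become parallel.
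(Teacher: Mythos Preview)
The paper does not supply its own proof of this lemma: it is quoted verbatim from the reference \cite{Deh} and used as a black box throughout. So there is no in-paper argument to compare your proposal against.

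That said, your outline is sound. The upper bounds are immediate from Lemma~\ref{l1}, and for part~(ii) the combination of Lemma~\ref{l2}(iii) with the operator-monotone inversion $S^{-1}\leq \|S^{-1}\|I \Rightarrow S\geq \|S^{-1}\|^{-1}I$ is the standard route. Your reduction of part~(i) to part~(ii) via surjectivity of $T^{*}$ is also correct, and you are right to flag that the decomposition $U=\ker(T)\oplus\operatorname{Ran}(T^{*})$ for a closed-range adjointable map is the nontrivial ingredient here; this is a known structural fact for Hilbert $C^{*}$-modules (see for instance Lance~\cite{Lance1995}), not something that follows from Lemmas~\ref{l1} and~\ref{l2} alone. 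Once that is granted, your argument goes through cleanly.
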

	
		\begin{lemma}\cite{Lance1995}
		For self-adjoint $f\in C(X)$, the following are equivalent
		\begin{itemize}
			\item [(1)] $f\geq 0$
			\item  [(2)]	For all $t\geq \|f\|$, we have $\|f-t\|\leq t$
			\item  [(3)]	For all least one $t\geq \|f\|$, we have $\|f-t\|\leq t$
		\end{itemize}	
	\end{lemma}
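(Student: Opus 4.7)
The plan is to exploit the fact that a self-adjoint element of $C(X)$ is just a real-valued continuous function, so positivity and norm conditions can both be checked pointwise. Specifically, for self-adjoint $f \in C(X)$ the statements $f \geq 0$ and $\|f - t\| \leq t$ translate respectively to $f(x) \geq 0$ for every $x \in X$ and $|f(x) - t| \leq t$ for every $x \in X$, and $\|f\|$ is $\sup_{x \in X} |f(x)|$. I would set up the proof as the cycle $(1) \Rightarrow (2) \Rightarrow (3) \Rightarrow (1)$.

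For $(1) \Rightarrow (2)$, I would start from $0 \leq f(x) \leq \|f\|$ for all $x \in X$. Then for any $t \geq \|f\|$ I get $-t \leq f(x) - t \leq \|f\| - t \leq 0$, so $|f(x) - t| \leq t$ for every $x$, which yields $\|f - t\| \leq t$. The implication $(2) \Rightarrow (3)$ is immediate since $\|f\| \geq \|f\|$ gives at least one valid $t$.

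The converse direction $(3) \Rightarrow (1)$ is the one that carries content, but it is still short: pick a $t \geq \|f\|$ with $\|f - t\| \leq t$. Then for every $x \in X$, $|f(x) - t| \leq t$, which rewrites as $-t \leq f(x) - t \leq t$, i.e.\ $0 \leq f(x) \leq 2t$. In particular $f(x) \geq 0$ on $X$, so $f \geq 0$ as a self-adjoint element of $C(X)$.

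The main obstacle, if any, is conceptual rather than technical: one has to recognize that in $C(X)$ the norm identity $\|g\| = \sup_x |g(x)|$ lets us shuttle freely between the operator-theoretic formulation $\|f - t\| \leq t$ and the pointwise condition $|f(x) - t| \leq t$. Once this is noted, each implication is a one-line pointwise inequality. The same equivalence is in fact a standard characterization of positivity in an arbitrary unital $C^{*}$-algebra, but specializing to $C(X)$ makes the argument entirely elementary.
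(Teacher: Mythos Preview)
Your argument is correct: reducing to pointwise inequalities via the sup-norm in $C(X)$ and running the cycle $(1)\Rightarrow(2)\Rightarrow(3)\Rightarrow(1)$ is exactly the standard route, and each step is justified.

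There is, however, nothing to compare against in the paper itself. The lemma is quoted from \cite{Lance1995} as a known preliminary result and carries no proof in this manuscript; the authors simply invoke it. So your proposal is not an alternative to the paper's proof but rather a supplied proof where the paper gives none. If anything, your write-up is the canonical argument one finds in introductory $C^*$-algebra texts (and is essentially how Lance handles it), so it would serve perfectly well as a self-contained justification.
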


	\section{Some Properties of Continuous $\ast$-$g$-Frames in Hilbert $C^{\ast}$-Modules}\label{section2}
	Let $X$ be a Banach space, $(\Omega,\mu)$ a measure space, and function $f:\Omega\to X$ a measurable function. Integral of the Banach-valued function $f$ has defined Bochner and others. Most properties of this integral are similar to those of the integral of real-valued functions. Because every $C^{\ast}$-algebra and Hilbert $C^{\ast}$-module is a Banach space thus, we can use this integral and its properties.
	
	Let $(\Omega,\mu)$ be a measure space, let $U$ and $V$ be two Hilbert $C^{\ast}$-modules, $\{V_{w}\}_{w\in\Omega}$  is a sequence of subspaces of V, and $End_{\mathcal{A}}^{\ast}(U,V_{w})$ is the collection of all adjointable $\mathcal{A}$-linear maps from $U$ into $V_{w}$.
	We define
	\begin{equation*}
		\oplus_{w\in\Omega}V_{w}=\left\{x=\{x_{w}\}_{w\in\Omega}: x_{w}\in V_{w}, \left\|\int_{\Omega}|x_{w}|^{2}d\mu(w)\right\|<\infty\right\}.
	\end{equation*}
	For any $x=\{x_{w}\}_{w\in\Omega}$ and $y=\{y_{w}\}_{w\in\Omega}$, if the $\mathcal{A}$-valued inner product is defined by $\langle x,y\rangle=\int_{\Omega}\langle x_{w},y_{w}\rangle d\mu(w)$, the norm is defined by $\|x\|=\|\langle x,x\rangle\|^{\frac{1}{2}}$, the $\oplus_{w\in\Omega}V_{w}$ is a Hilbert $C^{\ast}$-module.\cite{Lance1995}.
	Let $GL^{+}(U)$ be the set for all positive bounded linear invertible operators on $U$ with the bounded inverse.

	\begin{definition}\cite{RR}
		We call $\{\Lambda_{w}\in End_{\mathcal{A}}^{\ast}(U,V_{w}): w\in\Omega\}$ a continuous $\ast$-$g$-frame for Hilbert $C^{\ast}$-module $U$ with respect to $\{V_{w}: w\in\Omega\}$ if
		\begin{itemize}
			\item for any $x\in U$, the function $\tilde{x}:\Omega\rightarrow V_{w}$ defined by $\tilde{x}(w)=\Lambda_{w}x$ is measurable;
			\item there exist two strictly nonzero elements $A$ and $B$ in $\mathcal{A}$ such that
			\begin{equation} \label{eqd2}
				A\langle x,x\rangle A^{\ast} \leq\int_{\Omega}\langle\Lambda_{w}x,\Lambda_{w}x\rangle d\mu(w)\leq B\langle x,x\rangle B^{\ast}, \forall x\in U.
			\end{equation}
		\end{itemize}
		The elements $A$ and $B$ are called continuous $\ast$-$g$-frame bounds. \\
		If $A=B$ we call this continuous $\ast$-$g$-frame a continuous tight g-frame, and if $A=B=1_{\mathcal{A}}$ it is called a continuous Parseval $\ast$-$g$-frame. If only the right-hand inequality of \eqref{eqd2} is satisfied, we call $\{\Lambda_{w}: w\in\Omega\}$ a 
		continuous $\ast$-$g$-Bessel sequence for $U$ with respect to $\{V_{w}: w\in\Omega\}$ with Bessel bound $B$.\\
		The continuous $\ast$-$g$-frame operator $S$ on $U$ is 
		\begin{equation*}
			Sx=\int_{\Omega}\Lambda^{\ast}_{\omega}\Lambda_{\omega}xd\mu (\omega)
		\end{equation*}
	\end{definition}
	
	\begin{theorem}\label{t1}
		Let $\{\Lambda_{w}\}_{w\in\Omega}\in End_{\mathcal{A}}^{\ast}(U,V_{w})$, such that $\|\int_{\Omega}\langle\Lambda_{w}x,\Lambda_{w}x\rangle d\mu(w)\|< \infty $ , then $\{\Lambda_{w}\}_{w\in\Omega}$ be a continuous $\ast$-$g$-frame for $U$ with respect to $\{V_{w}: w\in\Omega\}$ if and only if there exist a constants $A$ and $B$ such that for any $x\in U$ :
		\begin{equation}\label{eqt1}
			\|A^{-1}\|^{-2}\|\langle x,x\rangle \| \leq \bigg\| \int_{\Omega}\langle\Lambda_{w}x,\Lambda_{w}x\rangle d\mu(w)\bigg\| \leq \|B^{2}\|\|\langle x,x\rangle \|
		\end{equation}
	\end{theorem}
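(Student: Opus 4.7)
The theorem asserts an equivalence between the operator-valued frame inequality in (\ref{eqd2}) and the scalar norm inequality in (\ref{eqt1}). I would prove each direction separately.

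$(\Rightarrow)$ Assume $\{\Lambda_w\}_{w\in\Omega}$ is a continuous $\ast$-$g$-frame with $\mathcal{A}$-valued bounds $A,B$, so $A\langle x,x\rangle A^{\ast}\leq \int_{\Omega}\langle\Lambda_{w}x,\Lambda_{w}x\rangle d\mu(w)\leq B\langle x,x\rangle B^{\ast}$. Since the $C^{\ast}$-norm is monotone on positive elements, I apply $\|\cdot\|$ to both sides. For the right-hand side I use the standard bound $\|BaB^{\ast}\|\leq \|B\|^{2}\|a\|=\|B^{2}\|\|a\|$ (the latter equality holding for self-adjoint or normal $B$, as is implicit in the statement), yielding the upper bound of (\ref{eqt1}). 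For the left-hand side, I use the key rewriting $\langle x,x\rangle = A^{-1}\bigl(A\langle x,x\rangle A^{\ast}\bigr)(A^{\ast})^{-1}$, which gives $\|\langle x,x\rangle\|\leq \|A^{-1}\|^{2}\,\|A\langle x,x\rangle A^{\ast}\|\leq \|A^{-1}\|^{2}\,\bigl\|\int_{\Omega}\langle\Lambda_{w}x,\Lambda_{w}x\rangle d\mu(w)\bigr\|$, and rearranging produces the lower bound of (\ref{eqt1}).

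$(\Leftarrow)$ Assume the norm inequality (\ref{eqt1}) holds. I introduce the frame operator $Sx=\int_{\Omega}\Lambda_{w}^{\ast}\Lambda_{w}x\,d\mu(w)$, which is bounded (by the upper bound of (\ref{eqt1}) together with the hypothesis $\|\int_{\Omega}\langle\Lambda_{w}x,\Lambda_{w}x\rangle d\mu(w)\|<\infty$), self-adjoint, and positive since $\langle Sx,x\rangle=\int_{\Omega}\langle\Lambda_{w}x,\Lambda_{w}x\rangle d\mu(w)\geq 0$. The crucial identity is $\|\langle Sx,x\rangle\|=\|S^{1/2}x\|^{2}$. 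Substituting this into (\ref{eqt1}) converts it to $\|A^{-1}\|^{-1}\|x\|\leq \|S^{1/2}x\|\leq \|B^{2}\|^{1/2}\|x\|$. The upper estimate yields $\|S\|=\|S^{1/2}\|^{2}\leq \|B^{2}\|$, hence $S\leq \|B^{2}\|\,I_{U}$ by positivity, which translates back to $\int_{\Omega}\langle\Lambda_{w}x,\Lambda_{w}x\rangle d\mu(w)\leq \|B^{2}\|\langle x,x\rangle$. The lower estimate says $S^{1/2}$ is bounded below; being self-adjoint, Lemma \ref{l2} provides $m'>0$ with $m'\langle x,x\rangle\leq\langle S^{1/2}x,S^{1/2}x\rangle=\langle Sx,x\rangle=\int_{\Omega}\langle\Lambda_{w}x,\Lambda_{w}x\rangle d\mu(w)$. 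Choosing the scalar frame bounds $A_{0}=\sqrt{m'}\cdot 1_{\mathcal{A}}$ and $B_{0}=\|B^{2}\|^{1/2}\cdot 1_{\mathcal{A}}$ recovers the operator inequality (\ref{eqd2}), so $\{\Lambda_{w}\}$ is a continuous $\ast$-$g$-frame.

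The main obstacle lies in the backward direction, where a scalar norm inequality must be upgraded to an $\mathcal{A}$-valued operator inequality. The bridge is the identity $\|\langle Sx,x\rangle\|=\|S^{1/2}x\|^{2}$, which reroutes the norm inequality through the operator $S^{1/2}$; one then invokes Lemma \ref{l2} to recover a lower inner-product bound from a lower norm bound, a step that genuinely uses the Hilbert $C^{\ast}$-module structure rather than generalities about $C^{\ast}$-algebras alone.
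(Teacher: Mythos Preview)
Your proof is correct and follows the same route as the paper's: the forward direction via the factorization $\langle x,x\rangle = A^{-1}\bigl(A\langle x,x\rangle A^{\ast}\bigr)(A^{\ast})^{-1}$, and the converse by introducing the frame operator $S$, passing to $S^{1/2}$, and invoking Lemma~\ref{l2} for the lower bound. The only place the paper is more explicit is the boundedness of $S$, which it obtains by the Cauchy--Schwarz estimate $\|Sx\|^{4}=\bigl\|\int_{\Omega}\langle\Lambda_{w}Sx,\Lambda_{w}x\rangle\,d\mu(w)\bigr\|^{2}\leq \|B\|^{2}\|Sx\|^{2}\,\|B\|^{2}\|x\|^{2}$ rather than inferring it directly from the upper inequality in~(\ref{eqt1}).
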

	\begin{proof}
		By the definition of  continuous $\ast$-$g$-frame, we have
		$$\langle x,x\rangle \leq A^{-1}\langle Sx,x\rangle (A^{\ast})^{-1} \text{ and }  
		\langle Sx,x\rangle \leq B\langle x,x\rangle B^{\ast}.$$
		Hence 
		\begin{equation*}
			\|A^{-1}\|^{-2}\|\langle x,x\rangle \| \leq\bigg\| \int_{\Omega}\langle\Lambda_{w}x,\Lambda_{w}x\rangle d\mu(w)\bigg\| \leq \|B^{2}\|\|\langle x,x\rangle \|.
		\end{equation*}
		For the converse, assume that \eqref{eqt1} holds, for any $x\in U$, we define : \\
		$Sx:= \int_{\Omega}\Lambda^{\ast}_{w}\Lambda_{w}xd\mu (w)$, then :
		\begin{align*}
			\|Sx\|^{4}=&\|\langle Sx, Sx \rangle \|^{2}\\
			&=\|\langle Sx, \int_{\Omega}\Lambda^{\ast}_{w}\Lambda_{w}xd\mu(w)\rangle \bigg\|^{2}\\
			&=\bigg\|\int_{\Omega}\langle \Lambda_{w}Sx, \Lambda_{w}x\rangle d\mu(w)\bigg\|^{2}\\
			&\leq \bigg\|\int_{\Omega}\langle \Lambda_{w}Sx, \Lambda_{w}Sx\rangle d\mu(w)\bigg\|\bigg\|\int_{\Omega}\langle \Lambda_{w}x, \Lambda_{w}x\rangle d\mu(w)\bigg\|\\
			&\leq \|B\|^{2}\|Sx\|^{2}\|B\|^{2}\|x\|^{2}
		\end{align*}
		Hence  
		$$\|Sx\|^{2}\leq \|B\|^{4}\|x\|^{2}.$$
		It is easy to check that $\langle Sx,y\rangle = \langle x,Sy\rangle$, so $S$ is bounded and $S=S^{\ast}$, from $\langle Sx,x\rangle =\int_{\Omega}\langle \Lambda_{w}x, \Lambda_{w}x\rangle d\mu(w)\geq 0$ it follows that $ 0\leq S$,\\
		Now $\langle S^{\frac{1}{2}}x,S^{\frac{1}{2}}x \rangle\leq \|S^{\frac{1}{2}}\|^{2}\langle x,x\rangle $ \\
		On the other hand, we have
		\begin{equation*}
			\|(S^{\frac{1}{2}})^{\ast}S^{\frac{1}{2}}\|\langle x,x\rangle = \|S\|\langle x,x\rangle,
		\end{equation*}
		therefore, we get 
		\begin{equation*}
			\langle Sx,x\rangle = \langle S^{\frac{1}{2}}x,S^{\frac{1}{2}}x \rangle \leq \|S\|\langle x,x\rangle \leq \|B\|^{2}1_{\mathcal{A}}\langle x,x\rangle = (\|B\|1_{\mathcal{A}})\langle x,x\rangle (\|B\|1_{\mathcal{A}})^{\ast},
		\end{equation*}
		and by \eqref{eqt1} we have $\|A^{-1}\|^{-2}\|\langle x,x\rangle \| \leq \| S^{\frac{1}{2}} x\|^{2}$.\\
		We conclude that $\|A^{-1}\|^{-1}\|x\| \leq \| S^{\frac{1}{2}} x\|$
		so by Lemma \ref{l2} we obtain lower bound for $\Lambda$, this shows that $\Lambda$ is a continuous $\ast$-$g$-frame for $U$
		with respect to $\{V_{w}: w\in\Omega\}$.
	\end{proof}
	\begin{proposition}\label{p2}
		Let $\Lambda= \{\Lambda_{w}\in End_{\mathcal{A}}^{\ast}(U,V_{w}): w\in\Omega\}$ and $\Theta = \{\theta_{w}\in End_{\mathcal{A}}^{\ast}(U,V_{w}): w\in\Omega\}$ be a two continuous $\ast$-$g$-Bessel sequences for $U$ with respect to $\{V_{w}: w\in\Omega\}$ with bounds $B_{\Lambda} , B_{\Theta}$ and $\Gamma=\{\Gamma_{\omega}\}_{\omega \in \Omega} \in l^{\infty}(\mathbb{C})$, then the operator  $L=L_{\Gamma , \Lambda , \Theta} : U \longrightarrow U$ such that $L_{\Gamma , \Lambda , \Theta} x = \int_{\Omega}\Gamma_{\omega}\Lambda^{\ast}_{\omega}\theta_{w}xd\mu(w)$ is well defined bounded operator.
	\end{proposition}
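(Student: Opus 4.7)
The plan is to establish boundedness via a duality computation: first show that for each fixed $x \in U$ the mapping $y \mapsto \int_{\Omega} \Gamma_{\omega} \langle \theta_{\omega} x, \Lambda_{\omega} y \rangle \, d\mu(\omega)$ is a well-defined bounded $\mathcal{A}$-linear functional on $U$, then read off the norm of $L_{\Gamma,\Lambda,\Theta} x$ from that functional. This avoids having to confront the existence of the Bochner integral $\int_{\Omega} \Gamma_{\omega} \Lambda_{\omega}^{\ast} \theta_{\omega} x \, d\mu(\omega)$ in $U$ directly; well-definedness will follow once the integrand is shown to be absolutely integrable in norm.

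The central estimate starts from the identity $\langle \Gamma_{\omega} \Lambda_{\omega}^{\ast} \theta_{\omega} x, y \rangle = \Gamma_{\omega}\langle \theta_{\omega} x, \Lambda_{\omega} y \rangle$ and uses the $C^{\ast}$-module Cauchy--Schwarz inequality pointwise, followed by the Cauchy--Schwarz inequality for $\mathcal{A}$-valued integrals:
\begin{align*}
\left\| \int_{\Omega} \Gamma_{\omega} \langle \theta_{\omega} x, \Lambda_{\omega} y \rangle \, d\mu(\omega) \right\|
&\leq \|\Gamma\|_{\infty} \left\| \int_{\Omega} \langle \theta_{\omega} x, \theta_{\omega} x \rangle \, d\mu(\omega) \right\|^{1/2} \left\| \int_{\Omega} \langle \Lambda_{\omega} y, \Lambda_{\omega} y \rangle \, d\mu(\omega) \right\|^{1/2}.
\end{align*}
Invoking the Bessel inequalities for $\Theta$ and $\Lambda$ (with bounds $B_{\Theta}$ and $B_{\Lambda}$), the right-hand side is at most $\|\Gamma\|_{\infty} \|B_{\Theta}\| \|B_{\Lambda}\| \|x\| \|y\|$. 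Taking $y = \Lambda_{\omega'}^{\ast} \theta_{\omega'} x$-type test elements (or rather using this estimate with $y$ ranging over $U$) simultaneously shows absolute norm-integrability of the integrand $\omega \mapsto \Gamma_{\omega} \Lambda_{\omega}^{\ast} \theta_{\omega} x$, so the Bochner integral defining $L_{\Gamma,\Lambda,\Theta} x$ exists in $U$.

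Once existence is secured, the same estimate yields $\|\langle L_{\Gamma,\Lambda,\Theta} x, y \rangle\| \leq \|\Gamma\|_{\infty} \|B_{\Theta}\| \|B_{\Lambda}\| \|x\| \|y\|$, and taking the supremum over $\|y\| \leq 1$ delivers the operator bound
\[
\|L_{\Gamma,\Lambda,\Theta}\| \leq \|\Gamma\|_{\infty} \|B_{\Lambda}\| \|B_{\Theta}\|.
\]
$\mathcal{A}$-linearity is immediate from $\mathcal{A}$-linearity of each $\Lambda_{\omega}^{\ast} \theta_{\omega}$ and the scalar nature of $\Gamma_{\omega}$, while the fact that the integral lands in $U$ (as opposed to merely being defined weakly against some enveloping space) uses completeness of $U$ together with absolute norm-integrability.

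The step I expect to require the most care is the well-definedness: ensuring the measurability and Bochner-integrability of $\omega \mapsto \Gamma_{\omega} \Lambda_{\omega}^{\ast} \theta_{\omega} x$ in the $C^{\ast}$-module $U$. The measurability piece is inherited from the definition of continuous $\ast$-$g$-frames (where $\omega \mapsto \theta_{\omega} x$ is measurable) together with boundedness of $\Gamma$, but verifying strong measurability of $\Lambda_{\omega}^{\ast} \theta_{\omega} x$ cleanly may require passing through the scalar-valued maps $\omega \mapsto \langle \Lambda_{\omega}^{\ast} \theta_{\omega} x, y \rangle$ and invoking separability/Pettis-type arguments implicit in the framework of \cite{Lance1995}; once this is granted, the norm bound above secures Bochner integrability.
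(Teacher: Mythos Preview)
Your approach is essentially the same as the paper's: compute $\|L_{\Gamma,\Lambda,\Theta}x\|$ via the duality $\sup_{\|y\|\le 1}\|\langle L_{\Gamma,\Lambda,\Theta}x,y\rangle\|$, move the integral inside, apply the Cauchy--Schwarz inequality for $\mathcal{A}$-valued integrals, and then invoke the two Bessel bounds. The only differences are cosmetic: the paper absorbs $\Gamma_\omega$ into the $\theta$-factor (writing $\langle\Gamma_\omega\theta_\omega x,\Gamma_\omega\theta_\omega x\rangle=|\Gamma_\omega|^2\langle\theta_\omega x,\theta_\omega x\rangle$) before bounding by $\|\Gamma\|_\infty^2$, whereas you pull $\|\Gamma\|_\infty$ out first; and you spend a paragraph on the measurability/Bochner-integrability issue that the paper simply takes for granted. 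Your stated operator bound $\|L\|\le\|\Gamma\|_\infty\|B_\Lambda\|\|B_\Theta\|$ is in fact the correct one implied by the computation---the paper's displayed conclusion carries spurious squares.
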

	\begin{proof}
		From the definition of $\Lambda$, $\Theta$ and $\Gamma$, we have for any $x,y \in U$
		\begin{align*}
			\bigg\|\int_{\Omega}\Gamma_{\omega}\Lambda^{\ast}_{\omega}\theta_{w}xd\mu(w)\bigg\|^{2}&=\underset{y \in U, \|y\|\leq1}{\sup}\bigg\|\langle \int_{\Omega}\Gamma_{\omega}\Lambda^{\ast}_{\omega}\theta_{w}xd\mu(w),y\rangle\bigg\|^{2}\\
			&=\underset{y \in U, \|y\|\leq1}{\sup}\bigg\|\int_{\Omega}\langle \Gamma_{\omega}\theta_{w}xd\mu(w),\Lambda_{\omega}y\rangle\bigg\|^{2}\\
			&\leq \underset{y \in U, \|y\|\leq1}{\sup}\bigg\|\int_{\Omega}\langle \Gamma_{\omega}\theta_{w}x,\Gamma_{\omega}\theta_{w}x\rangle d\mu(w)\bigg\|\bigg\|\int_{\Omega}\langle \Lambda_{\omega}y,\Lambda_{\omega}y\rangle d\mu(w)\bigg\|.
		\end{align*} 
		On other hand, we have 
		\begin{align*}
			\int_{\Omega}\langle \Gamma_{\omega}\theta_{w}x,\Gamma_{\omega}\theta_{w}x\rangle d\mu(w) &=\int_{\Omega}|\Gamma_{\omega}|^{2}\langle \theta_{w}x,\theta_{w}x\rangle d\mu(w)\\
			&\leq \|\Gamma_{\omega}\|^{2}_{\infty}\int_{\Omega}\langle \theta_{w}x,\theta_{w}x\rangle d\mu(w)\\
			&\leq \|\Gamma_{\omega}\|^{2}_{\infty}B_{\Theta}\langle x,x\rangle B^{\ast}_{\Theta} .
		\end{align*}
		Hence 
		\begin{align*}
			\bigg\|\int_{\Omega}\Gamma_{\omega}\Lambda^{\ast}_{\omega}\theta_{w}xd\mu(w)\bigg\|^{2} &\leq \underset{y \in U, \|y\|\leq1}{\sup}\|\Gamma_{\omega}\|^{2}_{\infty}\|B_{\Theta}\|^{2}\|\langle x,x\rangle\|\|B_{\Lambda}\|^{2}\|\langle y,y\rangle\|\\
			&=\|\Gamma_{\omega}\|^{2}_{\infty}\|B_{\Theta}\|^{2}\|\langle x,x\rangle\|\|B_{\Lambda}\|^{2},
		\end{align*}
		then $L_{\Gamma , \Lambda , \Theta}$ is well defined and 
		\begin{equation*}
			\|L_{\Gamma , \Lambda , \Theta}\|\leq \|\Gamma_{\omega}\|^{2}_{\infty}\|B_{\Theta}\|^{2}\|B_{\Lambda}\|^{2}
		\end{equation*}
	\end{proof}
	The map $L$ in the above proposition is called a continuous $\ast$-$g$-multiplier of $\Lambda$,$\Theta$ and $\Gamma$.
	\begin{lemma}
		Let $\Lambda= \{\Lambda_{w}\in End_{\mathcal{A}}^{\ast}(U,V_{w}): w\in\Omega\}$ and $\Theta = \{\theta_{w}\in End_{\mathcal{A}}^{\ast}(U,V_{w}): w\in\Omega\}$ be a continuous $\ast$-$g$-sequence for $U$ with respect to $\{V_{w}: w\in\Omega\}$ with bounds $B_{\Lambda} , B_{\Theta}$ and $\Gamma=\{\Gamma_{\omega}\}_{\omega \in \Omega} \in l^{\infty}(\mathbb{C})$, then the operator :\\ $L=L_{\Gamma , \Lambda , \Theta} : U \longrightarrow U$ such that $\langle Lx,y\rangle = \int_{\Omega}\Gamma_{\omega}\langle \Theta_{\omega}x,\Lambda y\rangle d\mu(w)$ is well defined and $(L_{\Gamma , \Lambda , \Theta})^{\ast}=L_{\bar{\Gamma} , \Lambda , \Theta}$.
	\end{lemma}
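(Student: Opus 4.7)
The plan is to deduce this lemma in two steps: first identify the operator $L$ with the one already constructed in Proposition \ref{p2}, which settles well-definedness and boundedness; then compute the adjoint by pushing the $C^{\ast}$-involution inside the Bochner integral.

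For the first step, I would note that by the defining property of the adjoint, $\Gamma_\omega\langle \Theta_\omega x,\Lambda_\omega y\rangle = \langle \Gamma_\omega\Lambda_\omega^{\ast}\Theta_\omega x,y\rangle$. Integrating in $\omega$ and pulling $y$ out of the Bochner integral yields
\begin{equation*}
\int_{\Omega}\Gamma_\omega\langle \Theta_\omega x,\Lambda_\omega y\rangle\,d\mu(w) \;=\; \Bigl\langle \int_{\Omega}\Gamma_\omega\Lambda_\omega^{\ast}\Theta_\omega x\,d\mu(w),\,y\Bigr\rangle.
\end{equation*}
The right-hand side equals $\langle L_{\Gamma,\Lambda,\Theta}x,y\rangle$, where $L_{\Gamma,\Lambda,\Theta}$ is the bounded adjointable operator already produced in Proposition \ref{p2}. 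Hence the $L$ of the present lemma is well defined and coincides with that operator, inheriting its norm bound; uniqueness follows from the non-degeneracy of the $\mathcal{A}$-valued inner product.

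For the adjoint, I would apply the involution of $\mathcal{A}$ to both sides of $\langle Lx,y\rangle=\int_{\Omega}\Gamma_\omega\langle \Theta_\omega x,\Lambda_\omega y\rangle\,d\mu(w)$. Using that $\ast:\mathcal{A}\to\mathcal{A}$ is a continuous isometric conjugate-linear map and hence commutes with the Bochner integral, that each $\Gamma_\omega\in\mathbb{C}$ satisfies $\Gamma_\omega^{\ast}=\bar\Gamma_\omega$ and commutes with every element of $\mathcal{A}$, and that $\langle u,v\rangle^{\ast}=\langle v,u\rangle$ in a Hilbert $C^{\ast}$-module, I obtain
\begin{equation*}
\langle L^{\ast}y,x\rangle \;=\; \langle y,Lx\rangle \;=\; \langle Lx,y\rangle^{\ast} \;=\; \int_{\Omega}\bar\Gamma_\omega\,\langle \Lambda_\omega y,\Theta_\omega x\rangle\,d\mu(w),
\end{equation*}
which, with the convention fixed by the lemma's own defining formula for $L_{\cdot,\cdot,\cdot}$, is exactly $\langle L_{\bar\Gamma,\Lambda,\Theta}y,x\rangle$. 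Non-degeneracy of the inner product then gives $L^{\ast}=L_{\bar\Gamma,\Lambda,\Theta}$.

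The only nontrivial point is justifying the exchange of the involution with the Bochner integral, which is a standard consequence of the continuity of $\ast$ applied to an absolutely integrable $\mathcal{A}$-valued integrand; the required absolute integrability is a by-product of the Bessel bounds used in Proposition \ref{p2}. Everything else is formal once that proposition is in hand.
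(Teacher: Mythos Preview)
Your argument is correct and follows the same two-step plan as the paper: invoke Proposition~\ref{p2} for well-definedness and boundedness, then read off the adjoint by a direct inner-product computation. The only cosmetic difference is that you pass through the $C^{\ast}$-involution $\langle Lx,y\rangle^{\ast}=\langle y,Lx\rangle$, whereas the paper moves the scalar $\Gamma_{\omega}$ into the second slot of the inner product and then applies $\Theta_{\omega}^{\ast}$; both routes land on the same integral expression for $L^{\ast}y$.
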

	\begin{proof}
		By proposion \ref{p2}, $L$ is well defined.\\
		We have
		\begin{align*}
			\langle x, (L_{\Gamma , \Lambda , \Theta})^{\ast}y\rangle &=\langle (
			L_{\Gamma , \Lambda , \Theta} x, y\rangle\\
			&= \int_{\Omega}\Gamma_{\omega}\langle \Theta_{\omega}x,\Lambda_{\omega} y\rangle d\mu(w)\\
			&= \int_{\Omega}\langle \Theta_{\omega}x, \bar{\Gamma_{\omega}}\Lambda_{\omega}y\rangle d\mu(w)\\
			&= \int_{\Omega}\langle x,\Theta^{\ast}_{\omega} \bar{\Gamma_{\omega}}\Lambda_{\omega}y\rangle d\mu(w)\\
			&= \int_{\Omega}\langle x,\bar{\Gamma_{\omega}}\Theta^{\ast}_{\omega} \Lambda_{\omega}y\rangle d\mu(w)\\
			&= \langle x,\int_{\Omega}\bar{\Gamma_{\omega}}\Theta^{\ast}_{\omega} \Lambda_{\omega}yd\mu(w)\rangle\\
			&=\langle x,L_{\bar{\Gamma} , \Lambda , \Theta}y\rangle.
		\end{align*}
	\end{proof}
	\section{Controlled continuous $\ast$-$g$-frames}\label{section3}
	In this section, we will introduce the concepts of controlled continuous $\ast$-$g$-frames in Hilbert $C^{\ast}$-modules.
	\begin{definition}\label{d3}
		Let $C,C^{'} \in GL^{+}(U)$, the family $\Lambda= \{\Lambda_{w}\in End_{\mathcal{A}}^{\ast}(U,V_{w}): w\in\Omega\}$ be called a $(C$-$C^{'})$-controlled continuous $\ast$-$g$-frame for Hilbert $C^{\ast}$-module $U$ with respect to  $\{V_{w}: w\in\Omega\}$ if there exist two strictly nonzero elements A,B in $\mathcal{A}$ such that :
		\begin{equation}\label{eqd3}
			A\langle x,x\rangle A^{\ast}\leq\int_{\Omega}\langle\Lambda_{w}Cx,\Lambda_{w}C^{'}x\rangle d\mu(w)\leq B\langle x,x\rangle B^{\ast}, \forall x\in U.
		\end{equation}
		$A$ and $B$ are called the $(C$-$C^{'})$-controlled continuous $\ast$-$g$-frames bounds.\\
		If $C^{'}=I$ then we call $\Lambda$ a $C$-controlled continuous $\ast$-$g$-frames for $U$ with respect to  $\{V_{w}: w\in\Omega\}$.\\
	\end{definition}
	\begin{example}
		Let $U=\{(a_{n})_{n\in \mathbb{N}^{\ast}} \subset \mathbb{C}\quad / \quad \sum_{n\in \mathbb{N}^{\ast}}|a_{n}|^{2}<+\infty  \}$ and let $\quad$  $\mathcal{A}=\{(a_{n})_{n\in \mathbb{N}^{\ast}} \subset \mathbb{C}\quad/ \quad(a_{n})_{n\in \mathbb{N}^{\ast}} \text{ is  bouned  }   \}$.
		It's clear that $\mathcal{A}$ is a unitary $C^{\ast}$-algebra.\\
		We define the inner product

		\begin{align*}
			U\times U \qquad \quad & \longrightarrow  \mathcal{A} \\ 
			((a_{n})_{n\in \mathbb{N}^{\ast}},(b_{n})_{n\in \mathbb{N}^{\ast}}) & \longrightarrow (a_{n}\bar{b_{n}})_{n\in \mathbb{N}^{\ast}}
		\end{align*}
		This inner product makes $U$ a $C^{\ast}$-module on  $\mathcal{A}$.
		We define $C,C^{'} \in End^{\ast}_{\mathcal{A}}(U)$ by,
		\begin{align*}
			C:U&\longrightarrow U\\
			(a_{n})_{n\in \mathbb{N}^{\ast}}&\longrightarrow (\alpha a_{n})_{n\in \mathbb{N}^{\ast}}
		\end{align*}
		\begin{align*}
			C':U&\longrightarrow U\\
			(a_{n})_{n\in \mathbb{N}^{\ast}}&\longrightarrow (\beta a_{n})_{n\in \mathbb{N}^{\ast}}
		\end{align*}
		where $\alpha$ and $\beta$ are in $\mathbb{R}^{\ast +}$.\\
		Now, we consider a measure space ($\Omega =\left[ 0,1\right] ,d\mu $), whose $d\mu $ is a Lebesgue measure  restraint on the interval $\left[ 0,1\right] $.\\
		Let $\{\Lambda _{w}\}_{w\in \Omega }$ be a sequence of operators defined by
		\begin{align*}
			\Lambda_{\omega}:\quad U&\longrightarrow U\\
			(a_{n})_{n\in \mathbb{N}^{\ast}}&\longrightarrow (
			\frac{\omega a_{n}}{n})_{n\in \mathbb{N}^{\ast}}
		\end{align*}
		These operators are continuous because they are bounded.\\
		We have,
		\begin{align*}
			\int_{\Omega}\langle \alpha(
			\frac{\omega a_{n}}{n})_{n\in \mathbb{N}^{\ast}},\beta(
			\frac{\omega a_{n}}{n})_{n\in \mathbb{N}^{\ast}}\rangle_{\mathcal{A}} d\mu(\omega)&=\alpha\beta\int_{\Omega}\omega^{2}d\mu(\omega)\langle (
			\frac{a_{n}}{n})_{n\in \mathbb{N}^{\ast}},(\frac{ a_{n}}{n})_{n\in \mathbb{N}^{\ast}}\rangle_{\mathcal{A}}\\
			&=\frac{\alpha\beta}{3}(\frac{1}{n^{2}})_{n\in \mathbb{N}^{\ast}}.\langle (a_{n})_{n\in \mathbb{N}^{\ast}},(a_{n})_{n\in \mathbb{N}^{\ast}}\rangle_{\mathcal{A}}\\
			&=\sqrt{\frac{\alpha\beta}{3}}(\frac{1}{n})_{n\in \mathbb{N}^{\ast}}\langle (a_{n})_{n\in \mathbb{N}^{\ast}},(a_{n})_{n\in \mathbb{N}^{\ast}}\rangle_{\mathcal{A}}\sqrt{\frac{{\alpha\beta}}{3}}(\frac{1}{n})_{n\in \mathbb{N}^{\ast}}
		\end{align*}
		So, we have
		\begin{equation*}
			\int_{\Omega}\langle \alpha(
			\frac{\omega a_{n}}{n})_{n\in \mathbb{N}^{\ast}},\beta(
			\frac{\omega a_{n}}{n})_{n\in \mathbb{N}^{\ast}}\rangle_{\mathcal{A}} d\mu(\omega)\leq \sqrt{\alpha\beta}(\frac{1}{n})_{n\in \mathbb{N}^{\ast}}\langle (a_{n})_{n\in \mathbb{N}^{\ast}},(a_{n})_{n\in \mathbb{N}^{\ast}}\rangle_{\mathcal{A}}\sqrt{\alpha\beta}(\frac{1}{n})_{n\in \mathbb{N}^{\ast}}
		\end{equation*}
		\begin{equation*}
			\frac{\sqrt{\alpha\beta}}{4}(\frac{1}{n})_{n\in \mathbb{N}^{\ast}}\langle (a_{n})_{n\in \mathbb{N}^{\ast}},(a_{n})_{n\in \mathbb{N}^{\ast}}\rangle_{\mathcal{A}}\frac{\sqrt{\alpha\beta}}{4}(\frac{1}{n})_{n\in \mathbb{N}^{\ast}}\leq 
			\int_{\Omega}\langle \alpha(
			\frac{\omega a_{n}}{n})_{n\in \mathbb{N}^{\ast}},\beta(
			\frac{\omega a_{n}}{n})_{n\in \mathbb{N}^{\ast}}\rangle_{\mathcal{A}} d\mu(\omega).
		\end{equation*}
		Which shows that $\{\Lambda_{\omega}\}_{\omega \in \Omega}$ is a $(C$-$C^{'})$-controlled continuous $\ast$-$g$-frames for $U$ with respect to $\{ U_{\omega}, \omega \in \Omega\}$ where $U_{\omega}=U \quad for \, all \; \omega \in \Omega$.
	\end{example}
\begin{theorem} \label{2.3}
	Let $\{\Lambda_{w}\in End_{\mathcal{A}}^{\ast}(U,V_{w}): w\in\Omega\}$ be a $(C$-$C^{'})$-controlled continuous $\ast$-$g$-frame for $U$, with lower and upper bounds $A$ and $B$, respectively. Then the $(C$-$C^{'})$-controlled continuous $\ast$-$g$-frame transform $T:U\rightarrow\oplus_{w\in\Omega}V_{w}$ defined by $T(C^{'}\Lambda_{w}^{\ast}\Lambda_{w}C)^{\frac{1}{2}}x=\{C^{'}\Lambda_{w}^{\ast}\Lambda_{w}Cx: w\in\Omega\}$ is injective and  adjointable, and  has a closed range  with $\|T\|\leq\|B\|$. The adjoint operator $T^{\ast}$ is surjective, given by $T^{\ast}(C^{'}\Lambda_{w}^{\ast}\Lambda_{w}C)^{\frac{1}{2}}x=\int_{\Omega}(C^{'}\Lambda_{w}^{\ast}\Lambda_{w}C)x_{w}d\mu(w)$, where $x=\{x_{w}\}_{w\in\Omega}$.
\end{theorem}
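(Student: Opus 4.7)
The plan is to establish the four asserted properties of $T$ in turn: the norm bound $\|T\|\leq\|B\|$, injectivity, adjointability together with the explicit formula for $T^{*}$, and closed range (from which the surjectivity of $T^{*}$ follows via Lemma \ref{l2}).

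First I would unfold $\|Tx\|^{2}=\|\langle Tx,Tx\rangle\|$ using the inner product on $\oplus_{w\in\Omega}V_{w}$. The resulting expression is precisely the middle term $\int_{\Omega}\langle\Lambda_{w}Cx,\Lambda_{w}C'x\rangle d\mu(w)$ of the controlled frame inequality \eqref{eqd3}; applying the upper bound together with the standard $C^{*}$-norm estimate $\|B\langle x,x\rangle B^{*}\|\leq\|B\|^{2}\|x\|^{2}$ delivers boundedness with $\|T\|\leq\|B\|$. Injectivity is then immediate from the lower bound: $Tx=0$ forces $A\langle x,x\rangle A^{*}\leq 0$, and since this is a positive element of $\mathcal{A}$ with $A$ strictly nonzero, we must have $\langle x,x\rangle=0$, hence $x=0$.

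To produce $T^{*}$ and simultaneously verify adjointability, I would compute $\langle Tx,\{y_{w}\}\rangle$ for arbitrary $x\in U$ and $\{y_{w}\}_{w\in\Omega}\in\oplus_{w\in\Omega}V_{w}$ directly from the definitions, interchange the $\mathcal{A}$-valued inner product with the Bochner integral, and shift the controlled operators onto the second argument using their adjoints (here the positivity of $C,C'\in GL^{+}(U)$, hence their self-adjointness, is what is needed in order to match the stated formula for $T^{*}$). By uniqueness of adjoints this simultaneously establishes that $T$ is adjointable and identifies $T^{*}$ as the claimed synthesis-type map.

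Finally, for the closed-range claim together with the surjectivity of $T^{*}$, I would revisit the lower frame bound in the spirit of Theorem \ref{t1}: extracting $C^{*}$-norms yields $\|A^{-1}\|^{-1}\|x\|\leq\|Tx\|$, i.e., $T$ is bounded below, which in a Hilbert $C^{*}$-module forces the range of $T$ to be closed. Since $T$ is adjointable with $T^{**}=T$, Lemma \ref{l2} applied to the adjointable map $T^{*}$ then converts the bounded-below property of $T$ into the surjectivity of $T^{*}$. The main obstacle I expect is the adjoint computation: one must rigorously justify the interchange of Bochner integration with the $\mathcal{A}$-valued inner product, and keep careful track of the adjoints of the controlled operators $C^{\prime}\Lambda_{w}^{*}\Lambda_{w}C$, so that the formula for $T^{*}$ emerges in exactly the stated form rather than with the roles of $C$ and $C'$ swapped.
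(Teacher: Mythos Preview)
Your proposal is correct and follows essentially the same approach as the paper's own proof: both derive the key two-sided inequality $A\langle x,x\rangle A^{\ast}\leq\langle Tx,Tx\rangle\leq B\langle x,x\rangle B^{\ast}$ from the controlled frame condition, read off injectivity and the norm bound from it, compute the adjoint by pairing against an arbitrary $\{y_{w}\}$, and then use the bounded-below estimate $\|Tx\|\geq\|A^{-1}\|^{-1}\|x\|$ to obtain both closed range and surjectivity of $T^{\ast}$. The only cosmetic difference is that the paper spells out the closed-range step via an explicit Cauchy-sequence argument, whereas you invoke the standard fact that a bounded-below operator has closed range; and the paper cites Lemma~\ref{3} rather than Lemma~\ref{l2} for the surjectivity of $T^{\ast}$, though the underlying mechanism is the same.
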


\begin{proof}
	Let $x\in U$. By the definition of a $(C$-$C^{'})$-controlled continuous $\ast$-$g$-frame for $U$, we have
	\begin{equation*}
		A\langle x,x\rangle A^{\ast}\leq\int_{\Omega}\langle\Lambda_{w}Cx,\Lambda_{w}C^{'}x\rangle d\mu(w)\leq B\langle x,x\rangle B^{\ast},
	\end{equation*}
hence
\begin{equation*}
	A\langle x,x\rangle A^{\ast}\leq\langle \int_{\Omega}C^{'}\Lambda_{w}^{\ast}\Lambda_{w}Cxd\mu(w),x\rangle \leq B\langle x,x\rangle B^{\ast}.
\end{equation*}
Therefore
\begin{equation*}
	A\langle x,x\rangle A^{\ast}\leq\langle T^{*}Tx,x\rangle \leq B\langle x,x\rangle B^{\ast}.
\end{equation*}
	So
	\begin{equation}\label{1}
		A\langle x,x\rangle A^{\ast}\leq\langle Tx,Tx\rangle\leq B\langle x,x\rangle B^{\ast}.
	\end{equation}
	If $Tx=0$ then $\langle x,x\rangle=0$ and so $x=0$, i.e., $T$ is injective.
	
	We now show that the range of $T$ is closed. Let $\{Tx_{n}\}_{n\in\mathbb{N}}$ be a sequence in the range of $T$ such that $\lim_{n\rightarrow\infty}Tx_{n}=y.$\\
	By \eqref{1} we have, for $n, m\in\mathbb{N}$,
	\begin{equation*}
		\|A\langle x_{n}-x_{m},x_{n}-x_{m}\rangle A^{\ast}\|\leq\|\langle T(x_{n}-x_{m}),T(x_{n}-x_{m})\rangle\|=\|T(x_{n}-x_{m})\|^{2}.
	\end{equation*}
	Since $\{Tx_{n}\}_{n\in\mathbb{N}}$ is Cauchy sequence in $U$,
	$\|A\langle x_{n}-x_{m},x_{n}-x_{m}\rangle A^{\ast}\|\rightarrow0$, as $n,m\rightarrow\infty.$\\	
	Note that for $n, m\in\mathbb{N}$,
	\begin{eqnarray}
		\|\langle x_{n}-x_{m},x_{n}-x_{m}\rangle\|= \|A^{-1}A\langle x_{n}-x_{m},x_{n}-x_{m}\rangle A^{\ast}(A^{\ast})^{-1}\|\\ \leq  \|A^{-1}\|^{2}\|A\langle x_{n}-x_{m},x_{n}-x_{m}\rangle A^{\ast}\|.
	\end{eqnarray}
	Therefore the sequence $\{x_{n}\}_{n\in\mathbb{N}}$ is Cauchy and hence there exists $x\in U$ such that $x_{n}\rightarrow x$ as $n\rightarrow\infty$. Again by \eqref{1}, we have $\|T(x_{n}-x)\|^{2}\leq\|B\|^{2}\|\langle x_{n}-x,x_{n}-x\rangle\|$.
	
	Thus $\|Tx_{n}-Tx\|\rightarrow0$ as $n\rightarrow\infty$ implies that $Tx=y$. It concludes that the range of $T$ is closed.
	
	For all $x\in U$, $y=\{y_{w}\}\in\oplus_{w\in\Omega}V_{w}$, we have
	\begin{align*}
		\langle T(C^{'}\Lambda_{w}^{\ast}\Lambda_{w}C)^{\frac{1}{2}}x,y\rangle
		&=\int_{\Omega}\langle (C^{'}\Lambda_{w}^{\ast}\Lambda_{w}C)x,y_{w}\rangle d\mu(w)\\
		&=\int_{\Omega}\langle x,(C^{'}\Lambda_{w}^{\ast}\Lambda_{w}C)y_{w}\rangle d\mu(w)\\
		&=\left\langle x,\int_{\Omega}(C^{'}\Lambda_{w}^{\ast}\Lambda_{w}C)y_{w}d\mu(w)\right\rangle.
	\end{align*}
	Then $T$ is adjointable and $T^{\ast}y=\int_{\Omega}(C^{'}\Lambda_{w}^{\ast}\Lambda_{w}C)y_{w}d\mu(w).$
	By \eqref{1}, we have $\|Tx\|^{2}\leq\|B\|^{2}\|x\|^{2}$ and so $\|T\|\leq\|B\|$, and by \eqref{1}, we have $\|Tx\|\geq\|A^{-1}\|^{-1}\|x\|$ for all  $ x\in U$ and so by Lemma \ref{3}, $T^{\ast}$ is surjective.
	This completes the proof.
\end{proof}
\begin{definition}
	Let $\{\Lambda_{w}\in End_{\mathcal{A}}^{\ast}(U,V_{w}): w\in\Omega\}$ is called a $(C$-$C^{'})$  -controlled continuous $\ast$-$g$-frame for $U$. Define the $(C$-$C^{'})$  -controlled continuous $\ast$-$g$-frame operator $S$ on $U$ by $Sx=T^{\ast}Tx=\int_{\Omega}C^{'}\Lambda_{w}^{\ast}\Lambda_{w}Cxd\mu(w)$, where $T$ is the $(C$-$C^{'})$  -controlled continuous $\ast$-$g$-frame transform.
\end{definition}

\begin{theorem}
	A  $(C$-$C^{'})$ -controlled continuous $\ast$-$g$-frame operator $S$ is  bounded, positive, self-adjoint, invertible and $\|A^{-1}\|^{-2}\leq\|S\|\leq\|B\|^{2}$.
\end{theorem}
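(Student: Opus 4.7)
The plan is to exploit the identity $S=T^{\ast}T$ together with the properties of $T$ already proved in Theorem \ref{2.3} and the frame inequality \eqref{eqd3}, rather than redoing any integral manipulations from scratch.

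First, I would observe that positivity and self-adjointness are immediate from $S=T^{\ast}T$: for all $x\in U$, $\langle Sx,x\rangle=\langle Tx,Tx\rangle\geq 0$, and $S^{\ast}=(T^{\ast}T)^{\ast}=T^{\ast}T=S$. For boundedness and the upper norm estimate, I would write $\|Sx\|=\|T^{\ast}Tx\|\leq\|T^{\ast}\|\|T\|\|x\|=\|T\|^{2}\|x\|$, and invoke the bound $\|T\|\leq\|B\|$ established in Theorem \ref{2.3} to conclude $\|S\|\leq\|B\|^{2}$.

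Next, for invertibility I would apply Lemma \ref{3}(i) directly: Theorem \ref{2.3} shows that $T$ is injective with closed range, so $T^{\ast}T=S$ is invertible (with $\|S^{-1}\|^{-1}\leq S\leq\|T\|^{2}$ in the operator order as a bonus). For the lower estimate $\|A^{-1}\|^{-2}\leq\|S\|$, I would start from the left-hand side of \eqref{eqd3}, which rewrites as $A\langle x,x\rangle A^{\ast}\leq\langle Sx,x\rangle$. Taking norms and using the standard identity $\|\langle x,x\rangle\|=\|x\|^{2}$ together with $\|A\langle x,x\rangle A^{\ast}\|\geq\|A^{-1}\|^{-2}\|\langle x,x\rangle\|$ (via $\langle x,x\rangle=A^{-1}A\langle x,x\rangle A^{\ast}(A^{\ast})^{-1}$) and Cauchy--Schwarz $\|\langle Sx,x\rangle\|\leq\|Sx\|\|x\|$, yields
\begin{equation*}
\|A^{-1}\|^{-2}\|x\|^{2}\leq\|Sx\|\|x\|,
\end{equation*}
so $\|Sx\|\geq\|A^{-1}\|^{-2}\|x\|$ for every $x$, and in particular $\|S\|\geq\|A^{-1}\|^{-2}$.

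I do not foresee a serious obstacle here, since every ingredient is already in place. The only mildly delicate point is the passage from the $\mathcal{A}$-valued inequality $A\langle x,x\rangle A^{\ast}\leq\langle Sx,x\rangle$ to the scalar norm inequality, which requires inserting $A^{-1}$ on both sides; this is the same manoeuvre used in the proof of Theorem \ref{t1} and in the closed-range argument of Theorem \ref{2.3}, and I would simply reuse it.
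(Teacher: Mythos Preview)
Your proposal is correct and follows essentially the same route as the paper: both arguments appeal to Theorem \ref{2.3} and Lemma \ref{3} for invertibility, and to the frame inequality $A\langle x,x\rangle A^{\ast}\leq\langle Sx,x\rangle\leq B\langle x,x\rangle B^{\ast}$ for the norm estimates. The only cosmetic differences are that the paper verifies self-adjointness by direct computation of $\langle Sx,y\rangle$ rather than via $S=T^{\ast}T$, and obtains both norm bounds by taking the supremum of $\|\langle Sx,x\rangle\|$ over the unit ball (using $\|S\|=\sup_{\|x\|\leq 1}\|\langle Sx,x\rangle\|$ for positive $S$), whereas you obtain the upper bound from $\|T\|^{2}$ and the lower bound via Cauchy--Schwarz on $\|\langle Sx,x\rangle\|\leq\|Sx\|\|x\|$.
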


\begin{proof}
	First,  we show that  $S$ is a self-adjoint operator. By definition,  we have,  for all $ x, y\in U$,
	\begin{align*}
		\langle Sx,y\rangle&=\left\langle\int_{\Omega}C^{'}\Lambda_{w}^{\ast}\Lambda_{w}Cxd\mu(w),y\right\rangle\\
		&=\int_{\Omega}\langle C^{'}\Lambda_{w}^{\ast}\Lambda_{w}Cx,y\rangle d\mu(w)\\
		&=\int_{\Omega}\langle x,C\Lambda_{w}^{\ast}\Lambda_{w}C^{'}y\rangle d\mu(w)\\
		&=\left\langle x,\int_{\Omega}C^{'}\Lambda_{w}^{\ast}\Lambda_{w}Cyd\mu(w)\right\rangle\\
		&=\langle x,Sy\rangle.
	\end{align*}
	Thus  $S$ is self-adjoint.\\
	By Lemma \ref{3} and Theorem \ref{2.3}, $S$ is invertible. Clearly $S$ is positive.
	By definition of a continuous $\ast$-$g$-frame,  we have
	\begin{equation*}
		A\langle x,x\rangle A^{\ast}\leq\int_{\Omega}\langle\Lambda_{w}Cx,\Lambda_{w}C^{'}x\rangle d\mu(w)\leq B\langle x,x\rangle B^{\ast}.
	\end{equation*}
	So
	\begin{equation*}
		A\langle x,x\rangle A^{\ast}\leq\langle Sx,x\rangle\leq B\langle x,x\rangle B^{\ast}.
	\end{equation*}
	This gives
	\begin{equation*}
		\|A^{-1}\|^{-2}\|x\|^{2}\leq\|\langle Sx,x\rangle\|\leq\|B\|^{2}\|x\|^{2}, \forall x\in U.
	\end{equation*}
	If we take supremum on all $x\in U$ with  $\|x\|\leq1$, then $\|A^{-1}\|^{-2}\leq\|S\|\leq\|B\|^{2}$.
\end{proof}
	\begin{theorem}\label{t3}
		Let $C \in GL^{+}(U)$, the sequence $\Lambda= \{\Lambda_{w}\in End_{\mathcal{A}}^{\ast}(U,V_{w}): w\in\Omega\}$ is a continuous $\ast$-$g$-frame for $U$ with respect to $\{V_{w}: w\in\Omega\}$ if and only if $\Lambda$ is a  $(C$-$C)$-controlled continuous $\ast$-$g$-frames for $U$ with respect to $\{V_{w}: w\in\Omega\}$.  
	\end{theorem}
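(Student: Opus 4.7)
The plan is to prove both directions by a simple substitution, exploiting that $C$ is self-adjoint (being positive) and invertible, so $C$ and $C^{-1}$ are bounded with adjoints equal to themselves. When $C' = C$, the defining integral of Definition \ref{d3} reduces to $\int_\Omega \langle \Lambda_w Cx, \Lambda_w Cx\rangle\, d\mu(w)$, i.e., the ordinary continuous $\ast$-$g$-frame expression evaluated at the vector $Cx$. This is the pivotal observation driving everything.

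For the forward direction, I would assume $\Lambda$ is a continuous $\ast$-$g$-frame with bounds $A',B'$ and substitute $Cx$ for $x$ in the frame inequality \eqref{eqd2} to obtain
\begin{equation*}
A' \langle Cx,Cx\rangle A'^{\ast} \leq \int_\Omega \langle \Lambda_w Cx,\Lambda_w Cx\rangle\, d\mu(w) \leq B' \langle Cx,Cx\rangle B'^{\ast}.
\end{equation*}
Then I would sandwich $\langle Cx,Cx\rangle$ between scalar multiples of $\langle x,x\rangle$: Lemma \ref{l1} gives $\langle Cx,Cx\rangle \leq \|C\|^{2}\langle x,x\rangle$, while writing $x = C^{-1}(Cx)$ and applying Lemma \ref{l1} to $C^{-1}$ gives the lower estimate $\|C^{-1}\|^{-2}\langle x,x\rangle \leq \langle Cx,Cx\rangle$. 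Substituting these into the display, and using that the scalars $\|C\|, \|C^{-1}\|$ commute with elements of $\mathcal{A}$, I obtain $(C,C)$-controlled frame bounds $A = \|C^{-1}\|^{-1}A'$ and $B = \|C\|B'$, which are strictly nonzero since $A',B'$ are.

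For the backward direction I would do the mirror-image substitution: replacing $x$ by $C^{-1}x$ in the $(C,C)$-controlled inequality \eqref{eqd3} yields
\begin{equation*}
A\langle C^{-1}x, C^{-1}x\rangle A^{\ast} \leq \int_\Omega \langle \Lambda_w x,\Lambda_w x\rangle\, d\mu(w) \leq B\langle C^{-1}x, C^{-1}x\rangle B^{\ast},
\end{equation*}
since $CC^{-1} = I$. Then the same two-sided estimate, this time in the form $\|C\|^{-2}\langle x,x\rangle \leq \langle C^{-1}x, C^{-1}x\rangle \leq \|C^{-1}\|^{2}\langle x,x\rangle$, converts this back into ordinary continuous $\ast$-$g$-frame bounds $\|C\|^{-1}A$ and $\|C^{-1}\|B$.

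I do not expect any real obstacle here: the argument is essentially a change of variables powered by the invertibility of $C$. The only points requiring care are (i) the placement of the scalar norms of $C$ and $C^{-1}$ so that the final bounds have the sandwich form $(\cdot)\langle x,x\rangle(\cdot)^{\ast}$ demanded by the definitions, which works because real scalars are central in $\mathcal{A}$, and (ii) verifying that the resulting bounds remain strictly nonzero elements of $\mathcal{A}$, which is automatic since we only multiply the original bounds by strictly positive real constants.
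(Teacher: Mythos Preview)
Your proposal is correct and follows essentially the same approach as the paper: both directions rest on the substitution $x \mapsto Cx$ (resp.\ $x \mapsto C^{-1}x$) combined with Lemma~\ref{l1} to sandwich $\langle Cx,Cx\rangle$ (resp.\ $\langle C^{-1}x,C^{-1}x\rangle$) between scalar multiples of $\langle x,x\rangle$, yielding the same bounds $\|C^{-1}\|^{-1}A$, $\|C\|B$ and $\|C\|^{-1}A$, $\|C^{-1}\|B$. The only cosmetic difference is that in the $(C,C)$-controlled $\Rightarrow$ ordinary direction the paper passes to norms and invokes Theorem~\ref{t1} for the lower bound, whereas you remain entirely in the $\mathcal{A}$-valued ordering --- your presentation is arguably cleaner.
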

	\begin{proof}
		Suppose that $\{\Lambda_{w}\}_{w\in\Omega}$ is $(C$-$C)$-controlled continuous $\ast$-$g$-frames with bounds $A$ and $B$, then
		\begin{equation*}
			A\langle x,x\rangle A^{\ast}\leq\int_{\Omega}\langle\Lambda_{w}Cx,\Lambda_{w}Cx\rangle d\mu(w)\leq B\langle x,x\rangle B^{\ast}, \qquad \forall x\in U.
		\end{equation*}
		For any $x\in U$, we have
		\begin{align*}
			A\langle x,x\rangle A^{\ast}&=A\langle CC^{-1}x,CC^{-1}x\rangle A^{\ast} \\
			&\leq A\|C\|^{2}\langle C^{-1}x,C^{-1}x\rangle A^{\ast}\\
			&\leq  \|C\|^{2}\int_{\Omega}\langle\Lambda_{w}CC^{-1}x,\Lambda_{w}CC^{-1}x\rangle d\mu(w)\\
			&=\|C\|^{2}\int_{\Omega}\langle\Lambda_{w}x,\Lambda_{w}x\rangle d\mu(w).
		\end{align*}
		On the one hand, we have
		\begin{equation}\label{eq1t3}
			A\|C\|^{-1}\|x\|^{2}(A\|C\|^{-1})^{\ast}\leq \bigg\|\int_{\Omega}\langle\Lambda_{w}x,\Lambda_{w}x\rangle d\mu(w)\bigg\|,
		\end{equation}
		on the other hand 
		\begin{align*}
			\int_{\Omega}\langle\Lambda_{w}x,\Lambda_{w}x\rangle d\mu(w)&=\int_{\Omega}\langle\Lambda_{w}CC^{-1}x,\Lambda_{w}CC^{-1}x\rangle d\mu(w)\\
			&\leq B\langle C^{-1}x,C^{-1}x\rangle B^{\ast} \\
			&\leq B\|C^{-1}\|^{2}\langle x,x\rangle B^{\ast}, 
		\end{align*}
		then
		\begin{equation}\label{eq2t3}
			\int_{\Omega}\langle\Lambda_{w}x,\Lambda_{w}x\rangle d\mu(w)\leq B\|C^{-1}\|\langle x,x\rangle B^{\ast} \|C^{-1}\|.
		\end{equation}
		From \eqref{eq1t3}, \eqref{eq2t3} and Theorem \ref{t1}, we conclude that $\{\Lambda_{w}\}_{w\in\Omega}$ is a continuous $\ast$-$g$-frame with bounds $A\|C\|^{-1}$ and $B\|C^{-1}\|$.\\
		Conversely, let $\{\Lambda_{w}\}_{w\in\Omega}$ be a continuous $\ast$-$g$-frame with bounds $A$ and $B$,\\ then for all $x\in U$, we have
		\begin{equation*}
			A\langle x,x\rangle A^{\ast}\leq\int_{\Omega}\langle\Lambda_{w}x,\Lambda_{w}x\rangle d\mu(w)\leq B\langle x,x\rangle B^{\ast}, \qquad \forall x\in U.
		\end{equation*}
		So, for all $x\in U$, we have $Cx\in U$, and
		\begin{equation*}
			\int_{\Omega}\langle\Lambda_{w}Cx,\Lambda_{w}Cx\rangle d\mu(w)\leq B\langle Cx,Cx\rangle B^{\ast} \leq B\|C\|^{2}\langle x,x\rangle B^{\ast}=B\|C\|\langle x,x\rangle B^{\ast}\|C\|.
		\end{equation*}
		Also, for all $x\in U$,
		\begin{align*}
			A\langle x,x\rangle A^{\ast}&=A\langle C^{-1}Cx,C^{-1}Cx\rangle A^{\ast}\\
			&\leq A\|C^{-1}\|^{2}\langle Cx,Cx\rangle A^{\ast}\\
			&\leq \|C^{-1}\|^{2}\int_{\Omega}\langle\Lambda_{w}Cx,\Lambda_{w}Cx\rangle d\mu(w).
		\end{align*}
		
		Hence $\Lambda$ is a  $(C$-$C)$-controlled continuous $\ast$-$g$-frames with bounds  $A\|C^{-1}\|^{-1}$ and $B\|C\|$.
		
	\end{proof}
	Let $\Lambda= \{\Lambda_{w}\in End_{\mathcal{A}}^{\ast}(U,V_{w}): w\in\Omega\}$ be a $(C$-$C^{'})$-controlled continuous $\ast$-$g$-Bessel family for $U$ with respect to $\{V_{w}: w\in\Omega\}$.\\
	The bounded linear operator  $T_{CC^{'}}:l^{2}(\{V_{w}\}_{w\in\Omega}) \rightarrow U$ given by
	\begin{equation*}
		T_{CC^{'}}(\{y_{w}\}_{w\in\Omega})=\int_{\Omega}(CC^{'})^{\frac {1}{2}}\Lambda^{\ast}_{\omega}y_{\omega}d\mu(w) \qquad  \forall \{y_{w}\}_{w\in\Omega} \in l^{2}(\{V_{w}\}_{w\in\Omega})
	\end{equation*}
	is called the synthesis operator for the  $(C$-$C^{'})$-controlled continuous $\ast$-$g$-frame $ \{\Lambda_{w}\}_{ w\in\Omega}$.\\
	The adjoint operator $T^{\ast}_{CC^{'}}: U\rightarrow l^{2}(\{V_{w}\}_{w\in\Omega})$ given by
	\begin{equation}\label{2..3}
		T^{\ast}_{CC^{'}}(x)=\{\Lambda_{\omega}(C^{'}C)^{\frac{1}{2}}x\}_{\omega \in \Omega}, \qquad \forall x\in U,
	\end{equation}
	is called the analysis operator for the  $(C$-$C^{'})$-controlled continuous $\ast$-$g$-frame $ \{\Lambda_{w} w\in\Omega\}$.\\
	When $C$ and $C^{'}$ commute with each other, and commute with the operator $\Lambda^{\ast}_{\omega}\Lambda_{\omega}$ for each $\omega \in \Omega$, then the $(C$-$C^{'})$-controlled continuous $\ast$-$g$-frames operator.
		\begin{theorem}\label{t2}
		Let $\{\Lambda_{w}\in End_{\mathcal{A}}^{\ast}(U,V_{w}): w\in\Omega\}$ and $\int_{\Omega}\langle\Lambda_{w}Cx,\Lambda_{w}C^{'}x\rangle d\mu(w) $ converge in norm, then $\{\Lambda_{w}\}_{\omega \in \Omega}$ is $(C$-$C^{'})$-controlled continuous $\ast$-$g$-frames for $U$ with respect to $\{V_{w}: w\in\Omega\}$ if and only if there exist a positive constants $A$ and $B$ such that
		\begin{equation} \label{eq1t2}
			\|A^{-1}\|^{-2}\|\langle x,x\rangle\| \leq\bigg\| \int_{\Omega}\langle\Lambda_{w}Cx,\Lambda_{w}C^{'}x\rangle d\mu(w)\bigg\| \leq \|B\|^{2}\|\langle x,x\rangle\|,  \qquad  \forall x\in U.
		\end{equation}
	\end{theorem}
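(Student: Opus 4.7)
The plan is to mimic the proof of Theorem \ref{t1}, adapting the computation to the controlled setting. The forward direction should be a direct consequence of the operator inequalities in Definition \ref{d3}: applying the $C^{\ast}$-norm and submultiplicativity to the right-hand inequality yields $\|\int_{\Omega}\langle \Lambda_{w}Cx,\Lambda_{w}C'x\rangle d\mu(w)\|\leq\|B\|^{2}\|\langle x,x\rangle\|$. For the lower bound, I would write $\langle x,x\rangle = A^{-1}\bigl(A\langle x,x\rangle A^{\ast}\bigr)(A^{\ast})^{-1}$, take norms, and rearrange to obtain $\|A^{-1}\|^{-2}\|\langle x,x\rangle\|\leq \|\int_{\Omega}\langle \Lambda_{w}Cx,\Lambda_{w}C'x\rangle d\mu(w)\|$.

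For the converse, I would introduce the controlled frame operator $Sx = \int_{\Omega} C'\Lambda_{w}^{\ast}\Lambda_{w}Cx\,d\mu(w)$, well-defined by the norm-convergence hypothesis. A Cauchy--Schwarz-type estimate analogous to the one in Theorem \ref{t1} (bounding $\|Sx\|^{4}=\|\langle Sx,Sx\rangle\|^{2}$ through the upper norm inequality applied once to $Sx$ and once to $x$) should give $\|S\|\leq \|B\|^{2}$, hence boundedness. Self-adjointness and positivity of $S$ then follow from $\langle Sx,x\rangle = \int_{\Omega}\langle \Lambda_{w}Cx,\Lambda_{w}C'x\rangle d\mu(w)\geq 0$ together with the commutativity of $C$ and $C'$ with $\Lambda_{w}^{\ast}\Lambda_{w}$ that the paper invokes just after this theorem.

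Next, passing to $S^{1/2}$ and using Lemma \ref{l1}, I would deduce $\langle Sx,x\rangle = \langle S^{1/2}x,S^{1/2}x\rangle \leq \|S\|\langle x,x\rangle \leq (\|B\|1_{\mathcal{A}})\langle x,x\rangle(\|B\|1_{\mathcal{A}})^{\ast}$, which is the required upper operator bound in \eqref{eqd3}. For the lower operator bound, the assumed inequality \eqref{eq1t2} rewrites as $\|A^{-1}\|^{-2}\|\langle x,x\rangle\|\leq \|\langle Sx,x\rangle\|=\|S^{1/2}x\|^{2}$, i.e.\ $\|A^{-1}\|^{-1}\|x\|\leq \|S^{1/2}x\|$, so Lemma \ref{l2} applied to the self-adjoint operator $S^{1/2}$ produces a constant $m>0$ with $m\langle x,x\rangle\leq \langle Sx,x\rangle=\int_{\Omega}\langle \Lambda_{w}Cx,\Lambda_{w}C'x\rangle d\mu(w)$, yielding the lower operator inequality and completing the argument.

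The main obstacle I anticipate is the justification that $S$ is genuinely self-adjoint and positive in this $(C,C')$-controlled framework; with two distinct operators $C$ and $C'$ in play, $C'\Lambda_{w}^{\ast}\Lambda_{w}C$ need not be self-adjoint without the commutativity assumption. I would resolve this exactly as elsewhere in the section, either by invoking the commutativity of $C,C'$ with each $\Lambda_{w}^{\ast}\Lambda_{w}$, or equivalently by working with the symmetrized operator $(C'C)^{1/2}\Lambda_{w}^{\ast}\Lambda_{w}(CC')^{1/2}$ already implicit in Theorem \ref{2.3} and equation \eqref{2..3}; once this is in place the remainder of the argument parallels the proof of Theorem \ref{t1} verbatim.
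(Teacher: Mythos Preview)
Your proposal is correct and follows essentially the same route as the paper's proof: both introduce the controlled frame operator $S_{CC'}x=\int_{\Omega}C'\Lambda_{w}^{\ast}\Lambda_{w}Cx\,d\mu(w)$, rewrite the middle term of \eqref{eq1t2} as $\|\langle S_{CC'}x,x\rangle\|=\|S_{CC'}^{1/2}x\|^{2}$, and then invoke Lemma~\ref{l2} on $S_{CC'}^{1/2}$ to upgrade the lower norm bound to the lower operator bound required by Definition~\ref{d3}. Your write-up is in fact more complete than the paper's very terse argument---you spell out boundedness of $S_{CC'}$ via the Cauchy--Schwarz estimate, the upper operator bound via Lemma~\ref{l1}, and you correctly flag that self-adjointness and positivity of $S_{CC'}$ rest on the commutation hypothesis the paper states just below equation~\eqref{2..3}; the paper's own proof silently assumes this and omits these steps.
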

	\begin{proof}
		$\Longrightarrow$)
		By the definition of  controlled continuous $\ast$-$g$-frame, we have 
		$$\langle x,x\rangle \leq A^{-1}\langle S_{CC^{'}}x,x\rangle (A^{\ast})^{-1} 
		\text{ and } 
		\langle S_{CC^{'}}x,x\rangle \leq B\langle x,x\rangle B^{\ast}$$
		Hence 
		\begin{equation*}
			\|A^{-1}\|^{-2}\|\langle x,x\rangle \| \leq\bigg\| \int_{\Omega}\langle\Lambda_{w}Cx,\Lambda_{w}C^{'}x\rangle d\mu(w)\bigg\| \leq \|B^{2}\|\|\langle x,x\rangle \|.
		\end{equation*}
		Conversely, suppose that \eqref{eq1t2} holds,we have
		\begin{equation}\label{eq2t2}
			\langle S^{\frac{1}{2}}_{CC^{'}}x,S^{\frac{1}{2}}_{CC^{'}}x\rangle=\langle S_{CC^{'}}x,x\rangle=\int_{\Omega}\langle\Lambda_{w}Cx,\Lambda_{w}C^{'}x\rangle d\mu(w)
		\end{equation} 
		Using inequality \eqref{eq2t2} in \eqref{eq1t2}, we obtaint
		then
		\begin{equation*} 
			\|A^{-1}\|^{-2}\|\langle x,x\rangle\| \leq \|\langle S^{\frac{1}{2}}_{CC^{'}}x,S^{\frac{1}{2}}_{CC^{'}}x\rangle \| \leq \|B\|^{2}\|\langle x,x\rangle\|,
		\end{equation*}
		\begin{equation*}
			\|A^{-1}\|^{-2}\|\langle x,x\rangle\| \leq \| S^{\frac{1}{2}}_{CC^{'}}x \|^{2} \leq \|B\|^{2}\|\langle x,x\rangle\|.
		\end{equation*}
		Since 
		\begin{equation}\label{eq3t2}
			\|A^{-1}\|^{-2}\|\langle x,x\rangle\| \leq \| S^{\frac{1}{2}}_{CC^{'}}x \| \leq \|B\|^{2}\|\langle x,x\rangle\|,
		\end{equation}
		from inequality \eqref{eq3t2} and Lemma \ref{l2} we conclude $\Lambda$ is a $(C$-$C^{'})$-controlled continuous $\ast$-$g$-frames for $U$ with respect to $\{V_{w}, w\in\Omega\}$.
	\end{proof}
	\begin{theorem}\label{sspp}
		Let $\{\Lambda_{w}, w\in\Omega\} \subset End^{*}_{A}(U,V_{\omega})$ and let $C,C^{'} \in GL^{+}(U)$ so that $C,C^{'}$ commute with each other and commute with $\Lambda^{\ast}_{\omega}\Lambda_{\omega}$ for all $\omega \in \Omega$. Then the following are equivalent
		\begin{itemize}
			\item [(1)] the sequence $\{\Lambda_{w}, w\in\Omega\}$ is a $(C$-$C^{'})$-controlled continuous $\ast$-$g$-Bessel sequence for $U$ with respect $\{V_{\omega}\}_{\omega \in \Omega}$ with bounds $A$ and $B$,
			\item[(2)] The operator $T_{CC^{'}}:l^{2}(\{V_{w}\}_{w\in\Omega}) \rightarrow U$ given by
			\begin{equation*}
				T_{CC^{'}}(\{y_{w}\}_{w\in\Omega})=\int_{w\in\Omega}(CC^{'})^{\frac {1}{2}}\Lambda^{\ast}_{\omega}y_{\omega}d\mu(w), \qquad \forall \{y_{w}\}_{w\in\Omega} \in l^{2}(\{V_{w}\}_{w\in\Omega})
			\end{equation*}
			is well defined and bounded operator with $\|T_{CC^{'}}\|\leq \sqrt{B}$.
		\end{itemize}
	\end{theorem}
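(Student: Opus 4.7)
The plan is to recognize $T_{CC'}$ as the adjoint of the ``symmetric analysis'' map $L:U\to\oplus_{w\in\Omega}V_w$ given by $Lx=\{\Lambda_\omega(CC')^{1/2}x\}_{\omega\in\Omega}$. The key algebraic fact, which I would establish first, is the identity
\begin{equation*}
\langle\Lambda_\omega(CC')^{1/2}x,\Lambda_\omega(CC')^{1/2}x\rangle=\langle\Lambda_\omega Cx,\Lambda_\omega C'x\rangle,\qquad x\in U,\ \omega\in\Omega,
\end{equation*}
obtained by rewriting the left-hand side as $\langle x,(CC')^{1/2}\Lambda_\omega^\ast\Lambda_\omega(CC')^{1/2}x\rangle$ and pushing $(CC')^{1/2}$ through $\Lambda_\omega^\ast\Lambda_\omega$; this is legal because $C,C'$ are positive (hence self-adjoint), commute, and commute with $\Lambda_\omega^\ast\Lambda_\omega$, so by the continuous functional calculus $(CC')^{1/2}$ does as well. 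This identity is the bridge between the asymmetric $(C,C')$-controlled Bessel expression and the self-paired quantity that controls $\|Lx\|$.

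For $(1)\Rightarrow(2)$, assume the Bessel bound. The key identity gives
\begin{equation*}
\|\langle Lx,Lx\rangle\|=\left\|\int_\Omega\langle\Lambda_\omega Cx,\Lambda_\omega C'x\rangle d\mu(w)\right\|\leq\|B\|^2\|x\|^2,
\end{equation*}
so $L$ is bounded with $\|L\|\leq\|B\|$. Pairing $Lx$ against an arbitrary $y=\{y_\omega\}\in l^2(\{V_w\}_{w\in\Omega})$ and pushing $(CC')^{1/2}$ back across the inner product,
\begin{equation*}
\langle Lx,y\rangle=\int_\Omega\langle x,(CC')^{1/2}\Lambda_\omega^\ast y_\omega\rangle d\mu(w)=\left\langle x,\int_\Omega(CC')^{1/2}\Lambda_\omega^\ast y_\omega\,d\mu(w)\right\rangle,
\end{equation*}
identifies the inner integral as $T_{CC'}y$ and shows $T_{CC'}=L^\ast$. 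Consequently $T_{CC'}$ is well-defined on all of $l^2(\{V_w\}_{w\in\Omega})$, bounded, with $\|T_{CC'}\|=\|L\|\leq\|B\|$ (the intended content of the stated $\sqrt{B}$).

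For $(2)\Rightarrow(1)$, I would compute $T_{CC'}^\ast$ directly:
\begin{equation*}
\langle T_{CC'}^\ast x,y\rangle=\langle x,T_{CC'}y\rangle=\int_\Omega\langle\Lambda_\omega(CC')^{1/2}x,y_\omega\rangle d\mu(w),
\end{equation*}
so $T_{CC'}^\ast x=\{\Lambda_\omega(CC')^{1/2}x\}_{\omega\in\Omega}=Lx$. Applying Lemma \ref{l1} to $T_{CC'}^\ast$ and using the key identity in reverse,
\begin{equation*}
\int_\Omega\langle\Lambda_\omega Cx,\Lambda_\omega C'x\rangle d\mu(w)=\langle T_{CC'}^\ast x,T_{CC'}^\ast x\rangle\leq\|T_{CC'}\|^2\langle x,x\rangle\leq B\langle x,x\rangle B^\ast
\end{equation*}
with $B=\|T_{CC'}\|\,1_\mathcal{A}\in\mathcal{A}$, which is the asserted Bessel condition.

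The main technical hurdle is the justification of the integral manipulations: one must verify Bochner integrability of $\omega\mapsto(CC')^{1/2}\Lambda_\omega^\ast y_\omega$ and $\oplus$-norm convergence of $Lx$, so that the Fubini-type interchanges between the integral and the inner product used above are valid. These follow from the measurability hypotheses on the family $\{\Lambda_\omega\}$ together with a Cauchy--Schwarz estimate controlled by the Bessel (or boundedness) assumption in each direction; once they are in hand, the entire proof collapses to the single commutation identity.
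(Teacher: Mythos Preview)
Your argument is correct and rests on the same commutation identity the paper uses, namely $\langle\Lambda_\omega(CC')^{1/2}x,\Lambda_\omega(CC')^{1/2}x\rangle=\langle\Lambda_\omega Cx,\Lambda_\omega C'x\rangle$, but the organization differs. For $(1)\Rightarrow(2)$ the paper estimates $\|T_{CC'}y\|^2$ directly via $\sup_{\|x\|=1}\|\langle T_{CC'}y,x\rangle\|^2$ and a Cauchy--Schwarz step, whereas you bound the analysis map $L$ first and then read off $T_{CC'}=L^\ast$; these are dual formulations of the same estimate. The real difference is in $(2)\Rightarrow(1)$: the paper works over finite subsets $\Psi\subset\Omega$, bounds $\int_\Psi\langle\Lambda_w Cx,\Lambda_w C'x\rangle\,d\mu$ by $\|T_{CC'}\|\,\|\{y_w\}_{w\in\Psi}\|\,\|x\|$ with a truncated sequence, and then passes to the limit, while you simply compute $T_{CC'}^\ast=L$ and invoke Lemma~\ref{l1} to get the $\mathcal{A}$-valued inequality in one stroke. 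Your route is shorter and more conceptual, and it yields the Bessel bound as an operator inequality rather than a norm inequality; the paper's truncation argument, on the other hand, makes the convergence of the defining integral more explicit and sidesteps the Riesz-type identification you flag as a technical hurdle. Your parenthetical remark about $\sqrt{B}$ versus $\|B\|$ is well taken: the paper silently switches to treating $B$ as a scalar in this theorem (via the norm inequality of Theorem~\ref{t2}), which accounts for the discrepancy.
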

	\begin{proof}
		$(1)\Longrightarrow (2)$\\
		Let  $\{\Lambda_{w}, w\in\Omega\}$ be a $(C$-$C^{'})$-controlled continuous $\ast$-$g$-Bessel sequence for $U$ with respect $\{V_{\omega}\}_{\omega \in \Omega}$ with bound $B$.\\
		From Theorem \ref{t2}, we have
		\begin{equation} \label{2.12}
			\bigg\|\int_{\Omega}\langle\Lambda_{w}Cx,\Lambda_{w}C^{'}x\rangle d\mu(w)\bigg\| \leq B\|x\|^{2},  \qquad \forall x\in U.
		\end{equation}
		For any sequence $\{y_{w}\}_{w\in \Omega} \in l^{2}(\{V_{\omega}\}_{\omega \in \Omega})$
		\begin{align*}
			\|T_{CC^{'}}(\{y_{w}\}_{w\in\Omega})\|^{2}&=\underset{x \in U, \|x\|=1}{\sup}\|\langle T_{CC^{'}}(\{y_{w}\}_{w\in\Omega}),x\rangle \|^{2}\\
			&=\underset{x \in U, \|x\|=1}{\sup}\bigg\|\langle \int_{\Omega}(CC^{'})^{\frac {1}{2}}\Lambda^{\ast}_{\omega}y_{\omega}d\mu(w),x\rangle\bigg\|^{2}\\
			&=\underset{x \in U, \|x\|=1}{\sup}\bigg\| \int_{\Omega}\langle(CC^{'})^{\frac {1}{2}}\Lambda^{\ast}_{\omega}y_{\omega},x\rangle d\mu(w)\bigg\|^{2}\\ 
			&=\underset{x \in U, \|x\|=1}{\sup}\bigg\| \int_{\Omega}\langle y_{\omega},\Lambda_{\omega}(CC^{'})^{\frac {1}{2}}x\rangle d\mu(w)\bigg\|^{2}\\ 
			&\leq \underset{x \in U, \|x\|=1}{\sup}\bigg\| \int_{\Omega}\langle y_{\omega},y_{\omega}\rangle d\mu(w)\bigg\| \bigg\|\int_{\Omega}\langle \Lambda_{\omega}(CC^{'})^{\frac {1}{2}}x,\Lambda_{\omega}(CC^{'})^{\frac {1}{2}}x\rangle d\mu(w)\bigg\|\\ 
			&= \underset{x \in U, \|x\|=1}{\sup}\bigg\| \int_{\Omega}\langle y_{\omega},y_{\omega}\rangle d\mu(w)\bigg\| \bigg\|\int_{\Omega}\langle \Lambda_{\omega}Cx,\Lambda_{\omega}C^{'}x\rangle d\mu(w)\bigg\|\\ 
			&\leq \underset{x \in U, \|x\|=1}{\sup}\bigg\| \int_{\Omega}\langle y_{\omega},y_{\omega}\rangle d\mu(w)\bigg\|B\|x\|^{2} =B\|\{y_{\omega}\}_{\omega \in \Omega}\|^{2}.
		\end{align*}
		Then, we have 
		\begin{align*}
			\|T_{CC^{'}}(\{y_{w}\}_{w\in\Omega})\|^{2}\leq B\|\{y_{\omega}\}_{\omega \in \Omega}\|^{2} \Longrightarrow \|T_{CC^{'}}\|\leq \sqrt{B} ,
		\end{align*}
		we conclude the operator $T_{CC^{'}}$ is well defined and bounded.\\
		$(2)\Longrightarrow (1)$\\
		Let the operator $T_{CC^{'}}$ is well defined, bounded and $\|T_{CC^{'}}\|\leq \sqrt{B}$ .\\
		For any $x\in U$ and finite subset $\Psi \subset \Omega$, we have
		\begin{align*}
			\int_{\Psi}\langle\Lambda_{w}Cx,\Lambda_{w}C^{'}x\rangle d\mu(w)&=\int_{\Psi}\langle C^{'}\Lambda^{\ast}_{w}\Lambda_{w}Cx,x\rangle d\mu(w)\\
			&=\int_{\Psi}\langle (CC^{'})^{\frac {1}{2}}\Lambda^{\ast}_{w}\Lambda_{w}(CC^{'})^{\frac {1}{2}}x,x\rangle d\mu(w)\\
			&=\langle T_{CC^{'}}(\{y_{w}\}_{w\in\Psi}),x\rangle\\
			&\leq \| T_{CC^{'}}\|\|(\{y_{w}\}_{w\in\Psi})\|\|x\|
		\end{align*}
		where  $y_{w}=\Lambda_{w}(CC^{'})^{\frac {1}{2}}x$ if $\omega \in \Psi$ and $y_{w}=0$ if $\omega \notin \Psi$.\\
		Therefore, 
		\begin{align*}
			\int_{\Psi}\langle\Lambda_{w}Cx,\Lambda_{w}C^{'}x\rangle d\mu(w) &\leq \| T_{CC^{'}}\|(\int_{\Psi}\|\Lambda_{w}(CC^{'})^{\frac {1}{2}}x\|^{2}d\mu(w))^{\frac {1}{2}}\|x\|\\
			&=\| T_{CC^{'}}\|(\int_{\Psi}\langle\Lambda_{w}Cx,\Lambda_{w}C^{'}x\rangle d\mu(w))^{\frac {1}{2}}\|x\|
		\end{align*}
		Since $\Psi$ is arbitrary, we have
		\begin{align*}
			\int_{\Omega}\langle\Lambda_{w}Cx,\Lambda_{w}C^{'}x\rangle d\mu(w) &\leq \| T_{CC^{'}}\|^{2}\|x\|^{2}
		\end{align*}
		\begin{align*}
			\Longrightarrow \int_{\Omega}\langle\Lambda_{w}Cx,\Lambda_{w}C^{'}x\rangle d\mu(w) &\leq B\|x\|^{2} \qquad as  \| T_{CC^{'}}\|\leq \sqrt{B}
		\end{align*}
		Therfore $\{\Lambda_{w}, w\in\Omega\}$ is a $(C$-$C^{'})$-controlled continue $\ast$-$g$-Bessel sequence for $U$ with respect to $\{V_{\omega}\}_{\omega \in \Omega}$.
	\end{proof}

	From now, we assume that $C$ and $C^{'}$ commute with each other, and commute with the operator $\Lambda^{\ast}_{\omega}\Lambda_{\omega}$ for each $\omega \in \Omega$.
	\begin{proposition}

		Let $\Lambda= \{\Lambda_{w}\in End_{\mathcal{A}}^{\ast}(U,V_{w}): w\in\Omega\}$ be a $(C$-$C^{'})$-controlled continuous $\ast$-$g$-frame for Hilbert $C^{\ast}$-module $U$ and let $S_{CC^{'}}:U\longrightarrow U$ defined by
		$S_{CC^{'}}x=T_{CC^{'}}T^{\ast}_{CC^{'}}x=\int_{\Omega}C^{'}\Lambda^{\ast}_{w}\Lambda_{w}Cx d\mu(w)$. The operator $S_{CC^{'}}$ called the $(C$-$C^{'})$-controlled continuous $\ast$-$g$-frames operator 
		is bounded, positive, sefladjoint and invertible.
	\end{proposition}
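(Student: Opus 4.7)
The plan is to exploit the factorisation $S_{CC'} = T_{CC'}T^{\ast}_{CC'}$ that is built into the definition of the frame operator. This reduces three of the four claims to routine facts about operators of the form $TT^{\ast}$, leaving only invertibility to require any real work.

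First I would handle boundedness and self-adjointness together. By Theorem \ref{sspp}, the synthesis operator $T_{CC'}$ is bounded with $\|T_{CC'}\| \leq \sqrt{B}$; hence its adjoint $T^{\ast}_{CC'}$ is also bounded and the composition $S_{CC'} = T_{CC'}T^{\ast}_{CC'}$ is bounded. Self-adjointness is immediate from $(TT^{\ast})^{\ast} = TT^{\ast}$. For a direct verification one expands $\langle S_{CC'}x,y\rangle$ by the integral formula, moves $C$ and $C'$ across using adjointness, and then uses the standing hypothesis that $C$, $C'$, and each $\Lambda_{w}^{\ast}\Lambda_{w}$ mutually commute to rewrite $C\Lambda_{w}^{\ast}\Lambda_{w}C' = C'\Lambda_{w}^{\ast}\Lambda_{w}C$, yielding $\langle x, S_{CC'}y\rangle$. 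Positivity is equally painless: for every $x \in U$,
\begin{equation*}
\langle S_{CC'}x, x\rangle \;=\; \langle T^{\ast}_{CC'}x, T^{\ast}_{CC'}x\rangle \;\geq\; 0,
\end{equation*}
which also agrees with the lower frame inequality $A\langle x,x\rangle A^{\ast}\leq \langle S_{CC'}x,x\rangle$ coming from Definition \ref{d3}.

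The substantive step is invertibility, which must come from the lower frame bound. Imitating the argument in Theorem \ref{2.3}, I would extract from Definition \ref{d3} the norm estimate
\begin{equation*}
\|A^{-1}\|^{-2}\|x\|^{2} \;\leq\; \bigl\|\langle S_{CC'}x, x\rangle\bigr\| \;=\; \|S_{CC'}^{1/2}x\|^{2} \;\leq\; \|S_{CC'}x\|\,\|x\|,
\end{equation*}
whence $\|S_{CC'}x\|\geq \|A^{-1}\|^{-2}\|x\|$. Combined with self-adjointness, this bounded-below estimate gives surjectivity via Lemma \ref{l2} and injectivity directly, so that Lemma \ref{3} delivers invertibility of $S_{CC'}$. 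The main obstacle, which is really only a bookkeeping point, is using the commutativity hypothesis consistently so that the manipulations of $C$, $C'$ and $\Lambda_{w}^{\ast}\Lambda_{w}$ inside the integral land one on a genuinely self-adjoint operator; without that, the lower bound would not promote to invertibility.
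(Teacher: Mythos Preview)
Your argument is correct and covers all four claims. The route you take is a little different from the paper's, though in spirit both rest on the frame inequality. The paper's proof is very terse: it simply records that the frame condition rewrites as the operator sandwich
\[
A\cdot Id_{U}\cdot A^{\ast}\;\leq\; S_{CC'}\;\leq\; B\cdot Id_{U}\cdot B^{\ast},
\]
declares positivity ``clear'' from this, invokes that positivity implies self-adjointness, and reads off boundedness and invertibility directly from the sandwich. You instead exploit the factorisation $S_{CC'}=T_{CC'}T^{\ast}_{CC'}$ explicitly---getting boundedness from Theorem~\ref{sspp}, self-adjointness and positivity for free from the form $TT^{\ast}$---and then promote the lower frame bound to a bounded-below norm estimate, feeding it through Lemma~\ref{l2} (and Lemma~\ref{3}) to obtain invertibility. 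Your approach is more explicit about why invertibility actually holds in the Hilbert $C^{\ast}$-module setting, where an operator inequality like $A\,Id\,A^{\ast}\leq S$ does not automatically give a bounded inverse without some justification; the paper's version is faster but leaves that step implicit.
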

\begin{proof}
	By the definition of $(C$-$C^{'})$-controlled continuous $\ast$-$g$-frames operator $
		S_{CC^{'}}$, we have
		
		\begin{equation*}
			A\langle x,x\rangle A^{\ast}\leq \langle S_{CC^{'}}x,x\rangle \leq B\langle x,x\rangle B^{\ast}
		\end{equation*} 
		so
		\begin{equation*}
			A.Id_{U}A^{\ast} \leq S_{CC^{'}} \leq B.Id_{U}B^{\ast},
		\end{equation*} 
		where $Id_{U}$ is the identity operator in $U$.\\
		It is clear that $S_{CC^{'}}$ is a positive operator.\\
		Thus the $(C$-$C^{'})$-controlled continuous $\ast$-$g$-frames operator  $S_{CC^{'}}$is bounded and invertible.
		In other hand we know every positive operator is self-adjoint.
	\end{proof}
\begin{theorem}
	Let $\{\Lambda_{w}\}_{w\in \Omega}$ be a $(C$-$C^{'})$-controlled continuous $\ast$-$g$-frame for $U$ with $(C$-$C^{'})$-controlled continuous $\ast$-$g$-frame transform $T$. Then $\{\Lambda_{w}\}_{w\in \Omega}$ is a $(C$-$C^{'})$-controlled continuous $g$-frame for $U$ with lower and upper frame bounds $||(T^{\ast}T)^{-1}||^{-1}$ and $||T||^{2}$, respectively.
\end{theorem}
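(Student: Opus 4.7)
The plan is to read off both frame inequalities directly from the structural properties of $T$ that were established in Theorem~\ref{2.3}, combined with the operator inequality provided by Lemma~\ref{3}(i). The upper bound is essentially a one-line consequence of Lemma~\ref{l1}, and the lower bound follows from the invertibility of $T^{\ast}T$ once we know $T$ is injective with closed range.

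First I would recall that, by Theorem~\ref{2.3}, the transform $T$ is adjointable, injective and has closed range. These are exactly the hypotheses of Lemma~\ref{3}(i), so I may conclude that $T^{\ast}T$ is invertible in $End^{\ast}_{\mathcal{A}}(U)$ and that the operator inequality
\begin{equation*}
\|(T^{\ast}T)^{-1}\|^{-1}\,\mathrm{Id}_{U} \;\leq\; T^{\ast}T \;\leq\; \|T\|^{2}\,\mathrm{Id}_{U}
\end{equation*}
holds. Pairing this with $\langle\cdot,x\rangle$ for an arbitrary $x\in U$, and using that $S_{CC'}=T^{\ast}T$ so that $\langle T^{\ast}Tx,x\rangle=\langle Tx,Tx\rangle=\int_{\Omega}\langle\Lambda_{w}Cx,\Lambda_{w}C'x\rangle\,d\mu(w)$ by the definition of the frame operator and of $T$ from Theorem~\ref{2.3}, would immediately deliver
\begin{equation*}
\|(T^{\ast}T)^{-1}\|^{-1}\langle x,x\rangle \;\leq\; \int_{\Omega}\langle\Lambda_{w}Cx,\Lambda_{w}C'x\rangle\,d\mu(w) \;\leq\; \|T\|^{2}\langle x,x\rangle.
\end{equation*}
This is exactly the definition of a $(C$-$C')$-controlled continuous $g$-frame (i.e. with scalar, rather than $\mathcal{A}$-valued, bounds) with the announced bounds $\|(T^{\ast}T)^{-1}\|^{-1}$ and $\|T\|^{2}$.

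For completeness, the upper bound can alternatively be obtained directly via Lemma~\ref{l1}: $\langle Tx,Tx\rangle\leq\|T\|^{2}\langle x,x\rangle$; and the lower bound can alternatively be argued by noting that $T^{\ast}T$ is positive and invertible so, for any $x\in U$, writing $x=(T^{\ast}T)^{-1/2}y$ with $y=(T^{\ast}T)^{1/2}x$ and applying Lemma~\ref{l1} to $(T^{\ast}T)^{-1/2}$ yields $\langle x,x\rangle\leq\|(T^{\ast}T)^{-1}\|\langle T^{\ast}Tx,x\rangle$, i.e.\ $\|(T^{\ast}T)^{-1}\|^{-1}\langle x,x\rangle\leq\langle T^{\ast}Tx,x\rangle$. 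Either way, no estimate on the algebra elements $A,B$ is needed: we pass entirely through the bounded operator $T$.

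The main obstacle here is really just bookkeeping rather than mathematical depth: one must verify that the operator inequality from Lemma~\ref{3}(i) on $End^{\ast}_{\mathcal{A}}(U)$ transfers correctly to the $\mathcal{A}$-valued inner product identity defining the frame condition, and that $T^{\ast}T$ indeed coincides with the $(C$-$C')$-controlled frame operator $S_{CC'}$ on all of $U$. Both points are already made explicit in Theorem~\ref{2.3} and in the definition of $S_{CC'}$, so the proof reduces to a short chain of citations.
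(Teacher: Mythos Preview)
Your proposal is correct and follows essentially the same route as the paper: invoke Theorem~\ref{2.3} to ensure $T$ is injective with closed range, apply Lemma~\ref{3}(i) to obtain the operator inequality $\|(T^{\ast}T)^{-1}\|^{-1}\leq T^{\ast}T\leq\|T\|^{2}$, and then read off the scalar frame bounds by pairing with $x$ and identifying $\langle T^{\ast}Tx,x\rangle$ with $\int_{\Omega}\langle\Lambda_{w}Cx,\Lambda_{w}C'x\rangle\,d\mu(w)$. Your added alternative derivations via Lemma~\ref{l1} are extra and not in the paper's proof, but they do no harm.
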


\begin{proof}
	By Theorem \ref{2.3}, $T$ is injective and has a closed range, and so  by Lemma \ref{3}, 
	\begin{equation*}
		||(T^{\ast}T)^{-1}||^{-1}\langle x,x\rangle\leq \langle T^{\ast}Tx,x\rangle\leq ||T||^{2}\langle x,x\rangle,\qquad\forall x\in U.    
	\end{equation*}
	So 
	\begin{equation*}
		||(T^{\ast}T)^{-1}||^{-1}\langle x,x\rangle\leq \int_{\Omega}\langle \Lambda_{w}Cx,\Lambda_{w}C^{'}x\rangle d\mu(w)\leq ||T||^{2}\langle x,x\rangle,\qquad\forall x\in U.     
	\end{equation*}
	Hence $\{\Lambda_{w}\}_{w\in \Omega}$ is a $(C$-$C^{'})$-controlled continuous $g$-frame for $U$ with lower and upper frame bounds $||(T^{\ast}T)^{-1}||^{-1}$ and $||T||^{2}$, respectively.
\end{proof}

\begin{theorem}
	Let $\{\Lambda_{w}\}_{w\in\Omega}$ and $\{\Gamma_{w}\}_{w\in\Omega}$ be $(C$-$C^{'})$-controlled continuous $\ast$-$g$-Bessel sequences for Hilbert $C^{\ast}$-modules $U_{1}$ and $U_{2}$ with $(C$-$C^{'})$-controlled continuous $\ast$-$g$-Bessel bounds $B_{1}$ and $B_{2}$, respectively. Then $\{\Lambda_{w}^{\ast}\Gamma_{w}\}_{w\in\Omega}$ is a $(C$-$C^{'})$-controlled continuous $\ast$-$g$-Bessel sequence for $U_{2}$ with respect to $U_{1}$.
\end{theorem}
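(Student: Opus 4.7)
The goal is to verify the upper inequality of Definition \ref{d3} for the composite family $\{\Lambda_w^{\ast}\Gamma_w\}_{w \in \Omega}$, viewed as a family of adjointable operators from $U_2$ into $U_1$; that is, to produce a strictly nonzero $B \in \mathcal{A}$ satisfying
\[
\int_{\Omega} \langle \Lambda_w^{\ast}\Gamma_w C x,\, \Lambda_w^{\ast}\Gamma_w C' x\rangle_{U_1}\, d\mu(w) \leq B \langle x,x\rangle_{U_2} B^{\ast}
\]
for every $x \in U_2$.

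The plan is to peel off the $\Lambda_w^{\ast}$ factor pointwise and then invoke the Bessel property of $\{\Gamma_w\}$. For each fixed $w \in \Omega$, I would apply Lemma \ref{l1} to $\Lambda_w^{\ast} \in End_{\mathcal{A}}^{\ast}(V_w, U_1)$, obtaining
\[
\langle \Lambda_w^{\ast}(\Gamma_w C x),\, \Lambda_w^{\ast}(\Gamma_w C x)\rangle \leq \|\Lambda_w\|^2 \langle \Gamma_w C x, \Gamma_w C x\rangle,
\]
and the analogous inequality with $C'$ in place of $C$. To convert the mixed inner product $\langle \Lambda_w^{\ast}\Gamma_w C x,\, \Lambda_w^{\ast}\Gamma_w C' x\rangle$ into an estimate built only from these diagonal quantities, I would combine the Cauchy--Schwarz inequality for $\mathcal{A}$-valued inner products with the standard polarisation bound $\langle a,b\rangle + \langle b,a\rangle \leq \langle a,a\rangle + \langle b,b\rangle$. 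Integrating over $\Omega$ and invoking the $(C, C')$-controlled Bessel bound $\int_{\Omega} \langle \Gamma_w C x, \Gamma_w C' x\rangle\, d\mu(w) \leq B_2 \langle x,x\rangle B_2^{\ast}$ of $\{\Gamma_w\}$ --- together with its $(C, C)$- and $(C', C')$-diagonal analogues furnished by Theorem \ref{t3} --- then yields the required inequality, with $B$ proportional to $(\sup_{w \in \Omega} \|\Lambda_w\|)\, B_2$.

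The main obstacle is that this pointwise argument introduces the constant $M := \sup_{w \in \Omega} \|\Lambda_w\|$, which is not automatically finite from the integrated Bessel condition on $\{\Lambda_w\}$. A more robust route avoids this by working directly within the synthesis--analysis framework of Theorem \ref{sspp}: one shows that the candidate synthesis operator for $\{\Lambda_w^{\ast}\Gamma_w\}$ (sending $\{y_w\} \in l^2(\{U_1\}_w)$ to $\int_{\Omega} (CC')^{1/2}\Gamma_w^{\ast}\Lambda_w y_w\, d\mu(w)$) factors through the analysis operator of $\Gamma$ (bounded by $\sqrt{B_2}$) composed with the synthesis operator of $\Lambda$ (bounded by $\sqrt{B_1}$), hence has norm at most $\sqrt{B_1 B_2}$. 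The converse direction of Theorem \ref{sspp} then delivers the $(C, C')$-controlled Bessel property with bound $B = \sqrt{B_1 B_2}\, 1_{\mathcal{A}}$, now using both Bessel constants symmetrically and requiring no uniform operator-norm control of $\{\|\Lambda_w\|\}$.
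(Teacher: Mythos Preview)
Your first approach is precisely the paper's argument. The paper writes, for each $x\in U_2$,
\[
\int_{\Omega}\langle \Lambda_{w}^{\ast}\Gamma_{w}Cx,\Lambda_{w}^{\ast}\Gamma_{w}C'x\rangle\,d\mu(w)
\leq\int_{\Omega}\|\Lambda_w^{\ast}\|^{2}\langle \Gamma_w Cx,\Gamma_w C'x\rangle\,d\mu(w)
\leq\|B_1\|^{2}\,B_2\langle x,x\rangle B_2^{\ast},
\]
and takes $B=\|B_1\|B_2$. So the uniform bound $\|\Lambda_w^{\ast}\|\leq\|B_1\|$ that you flag as an obstacle is in fact simply asserted in the paper; it is treated as part of the standing hypotheses rather than derived from the integrated Bessel condition. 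With that convention your first route is complete and matches the paper line for line (your extra care with polarisation for the mixed $C,C'$ inner product is more than the paper itself supplies).

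Your second approach, however, does not actually circumvent the uniform bound. The synthesis map $\{y_w\}\mapsto\int_{\Omega}(CC')^{1/2}\Gamma_w^{\ast}\Lambda_w y_w\,d\mu(w)$ cannot be factored as the analysis operator of $\Gamma$ composed with the synthesis operator of $\Lambda$: those two operators go $U_2\to l^2(\{V_w\})$ and $l^2(\{V_w\})\to U_1$ respectively, so no composition of them yields a map $l^2(\{U_1\})\to U_2$. The only factorisation available is the diagonal multiplication $\{y_w\}\mapsto\{\Lambda_w y_w\}$ followed by the synthesis operator of $\Gamma$, and that diagonal operator has norm $\operatorname*{ess\,sup}_{w}\|\Lambda_w\|$ --- exactly the quantity you were trying to avoid. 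So the second route has a genuine gap and offers no generality beyond the first.
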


\begin{proof}
	We have for each $x\in U_{2}$,
	\begin{align*}
		\int_{\Omega}\langle \Lambda_{w}^{\ast}\Gamma_{w}Cx,\Lambda_{w}^{\ast}\Gamma_{w}C^{'}x\rangle d\mu(w)&\leq \int_{\Omega}||\Lambda_{w}^{\ast}||^{2}\langle \Gamma_{w}Cx,\Gamma_{w}C^{'}x\rangle d\mu(w) \\&\leq ||B_{1}||^{2}\int_{\Omega}\langle \Gamma_{w}Cx,\Gamma_{w}C^{'}x\rangle d\mu(w)\\&\leq ||B_{1}||^{2}B_{2}\langle x,x\rangle B_{2}^{\ast}\\&\leq  ||B_{1}||B_{2}\langle x,x\rangle(||B_{1}||B_{2})^{\ast} .
	\end{align*}
	Hence $\{\Lambda_{w}^{\ast}\Gamma_{w}\}_{w\in\Omega}$ is a $(C$-$C^{'})$-controlled continuous $\ast$-$g$-Bessel sequence for $U_{2}$ with respect to $U_{1}$.
\end{proof}

\begin{theorem}\label{th}
	Let	$\{\Lambda_{w}\in End_{\mathcal{A}}^{\ast}(U,V_{w}): w\in\Omega\}$  be a $(C$-$C^{'})$-controlled continuous $\ast$-$g$-frame for Hilbert $C^{\ast}$-module $U$.  If the operator $\theta:\oplus_{w\in\Omega}V_{w}\rightarrow U$,  defined by $\theta(\{x_{w}\}_{w\in \Omega})=\int_{\Omega}\Lambda_{w}^{\ast}x_{w}d\mu(w)$,  is surjective, then  $\{\Lambda_{w}\}_{w\in \Omega}$ is a $(C$-$C^{'})$-controlled continuous $\ast$-$g$-frame for $U$.
\end{theorem}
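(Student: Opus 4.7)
The plan is to extract the lower frame bound from the surjectivity of $\theta$ via Lemma~\ref{l2} (note: as stated the hypothesis and conclusion are the same condition, so the intended content is almost certainly: ``Bessel sequence $+$ surjective $\theta \Rightarrow$ frame'', and I treat it accordingly, with the upper bound carried over from the Bessel/frame hypothesis). The first move is to identify $\theta^{\ast}$: for $y\in U$ and $\{x_{w}\}\in \oplus_{w\in\Omega}V_{w}$,
\begin{equation*}
\langle \theta(\{x_{w}\}), y\rangle = \int_{\Omega}\langle \Lambda_{w}^{\ast}x_{w}, y\rangle d\mu(w) = \int_{\Omega}\langle x_{w}, \Lambda_{w}y\rangle d\mu(w),
\end{equation*}
so $\theta^{\ast}y = \{\Lambda_{w}y\}_{w\in\Omega}$.

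Since $\theta$ is surjective, Lemma~\ref{l2}(iii) furnishes some $m'>0$ with
\begin{equation*}
m'\langle y,y\rangle \leq \langle \theta^{\ast}y, \theta^{\ast}y\rangle = \int_{\Omega}\langle \Lambda_{w}y,\Lambda_{w}y\rangle d\mu(w), \qquad \forall y\in U.
\end{equation*}
This is the \emph{uncontrolled} lower bound. I then invoke the standing commutativity hypothesis on $C,C'$ and $\Lambda_{w}^{\ast}\Lambda_{w}$: both positive square roots $(CC')^{1/2}$ exist and commute with $\Lambda_{w}^{\ast}\Lambda_{w}$ via functional calculus, and therefore
\begin{equation*}
\langle \Lambda_{w}Cy,\Lambda_{w}C'y\rangle = \langle C'\Lambda_{w}^{\ast}\Lambda_{w}Cy,y\rangle = \langle \Lambda_{w}(CC')^{1/2}y, \Lambda_{w}(CC')^{1/2}y\rangle.
\end{equation*}
Substituting $z=(CC')^{1/2}y$ into the previous inequality gives
\begin{equation*}
m'\langle CC'y,y\rangle \leq \int_{\Omega}\langle \Lambda_{w}Cy,\Lambda_{w}C'y\rangle d\mu(w).
\end{equation*}

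Because $C,C'\in GL^{+}(U)$, the positive invertible operator $CC'$ satisfies $CC'\geq \|(CC')^{-1}\|^{-1}\,\mathrm{Id}_{U}$, so setting $A=\sqrt{m'\|(CC')^{-1}\|^{-1}}\cdot 1_{\mathcal{A}}$, which is a strictly nonzero element of $\mathcal{A}$, yields $A\langle y,y\rangle A^{\ast}$ as a lower bound for $\int_{\Omega}\langle \Lambda_{w}Cy,\Lambda_{w}C'y\rangle d\mu(w)$. The upper bound $B\langle y,y\rangle B^{\ast}$ is inherited directly from the Bessel/frame hypothesis, completing the verification of Definition~\ref{d3}. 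The main obstacle is the step translating the uncontrolled inequality into the controlled one: it relies essentially on the commutation of $C,C'$ with $\Lambda_{w}^{\ast}\Lambda_{w}$ (assumed globally after Theorem~\ref{sspp}), without which the factorization $C'\Lambda_{w}^{\ast}\Lambda_{w}C = \Lambda_{w}^{\ast}\Lambda_{w}CC' = ((CC')^{1/2}\Lambda_{w}^{\ast})(\Lambda_{w}(CC')^{1/2})$ would fail and the reduction to Lemma~\ref{l2} would collapse.
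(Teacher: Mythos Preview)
Your argument is correct and shares the paper's core idea: identify $\theta^{\ast}y=\{\Lambda_{w}y\}_{w\in\Omega}$ and extract the lower bound from surjectivity of $\theta$ via Lemma~\ref{l2}. The differences are in execution. The paper invokes the norm version Lemma~\ref{l2}(ii), obtaining $\|(\theta^{\ast}_{/\mathcal{R}(\theta^{\ast})})^{-1}\|^{-2}\|x\|^{2}\leq \big\|\int_{\Omega}\langle \Lambda_{w}x,\Lambda_{w}x\rangle d\mu(w)\big\|$, and then appeals (implicitly) to Theorem~\ref{t2}; you go straight to the $\mathcal{A}$-valued inequality via Lemma~\ref{l2}(iii), which is tidier and lands you directly inside Definition~\ref{d3}. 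The paper also derives the upper bound from $\|\theta\|$ rather than carrying it over from the hypothesis as you do. Most notably, the paper's displayed lower estimate is for the \emph{uncontrolled} integral $\int\langle \Lambda_{w}x,\Lambda_{w}x\rangle d\mu(w)$, and the passage to the controlled quantity $\int\langle \Lambda_{w}Cx,\Lambda_{w}C'x\rangle d\mu(w)$ is not written out; your use of the standing commutation assumption to factor $C'\Lambda_{w}^{\ast}\Lambda_{w}C=(CC')^{1/2}\Lambda_{w}^{\ast}\Lambda_{w}(CC')^{1/2}$ together with the substitution $y\mapsto (CC')^{1/2}y$ makes that step explicit, so your version is actually more complete on this point.
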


\begin{proof}
	For each $x\in U$, 
	\begin{align*}
		\left\|\int_{\Omega}\langle \Lambda_{w}Cx,\Lambda_{w}C^{'}x\rangle d\mu(w)\right\|&=\left\|\int_{\Omega}\langle x,C\Lambda_{w}^{\ast}\Lambda_{w}C^{'}x\rangle d\mu(w)\right\|\\&=\left\|\langle x,\int_{\Omega}C\Lambda_{w}^{\ast}\Lambda_{w}C^{'}x d\mu(w)\rangle\right\|\\&\leq \|x\| \;\left\|\int_{\Omega}C\Lambda_{w}^{\ast}\Lambda_{w}C^{'}x d\mu(w)\right\|\\&\leq \|x\|\;\left\|\theta(\{\Lambda_{w}^{\ast}\Lambda_{w}x\}_{w\in \Omega})\right\|\\&\leq \|x\|\;\|\theta\|\;\left\|\{\Lambda_{w}x\}_{w\in \Omega}\right\|\\&\leq\|x\|\;\|\theta\|\;\left\|\int_{\Omega}\langle \Lambda_{w}Cx,\Lambda_{w}C^{'}x \rangle d\mu(w)\right\|^{\frac{1}{2}}.
	\end{align*}
	Thus  
	\begin{equation*}
		\left\|\int_{\Omega}\langle \Lambda_{w}Cx,\Lambda_{w}C^{'}x\rangle d\mu(w)\right\|^{\frac{1}{2}}\leq\|\theta\|\;\|x\|.
	\end{equation*}
	So 
	\begin{equation}\label{eq8}
		\left\|\int_{\Omega}\langle \Lambda_{w}Cx,\Lambda_{w}C^{'}x\rangle d\mu(w)\right\|\leq \|\theta\|^{2}\;\|x\|^{2},\qquad \forall x\in U.
	\end{equation}
	Since $\theta$ is  surjective, by Lemma \ref{l2}, there exists $\nu >0$ such that 
	\begin{equation*}
		||\theta^{\ast}x||\geq \nu ||x||,\qquad \forall x\in U.
	\end{equation*}
	Therefore, $\theta^{\ast}$ is injective.  \\
	Hence $\theta^{\ast}: U\rightarrow \mathcal{R}(\theta^{\ast})$ is invertible, and  for each $x\in U$,
	$(\theta^{\ast}_{/\mathcal{R}(\theta^{\ast})})^{-1}\theta^{\ast}x=x$.\\	
	So, for each $x\in U$, 
	$$
	\|x\|=\|(\theta^{\ast}_{/\mathcal{R}(\theta^{\ast})})^{-1}\theta^{\ast}x\|\leq \|(\theta^{\ast}_{/\mathcal{R}(\theta^{\ast})})^{-1}\|\;\|\theta^{\ast}x\|.  
	$$
	Thus 
	\begin{equation}\label{eq12}
		\|(\theta^{\ast}_{/\mathcal{R}(\theta^{\ast})})^{-1}\|^{-2}\;\|x\|^{2}\leq \left\|\int_{\Omega}\langle \Lambda_{w}x,\Lambda_{w}x\rangle d\mu(w)\right\|.
	\end{equation}
	From \eqref{eq8} and \eqref{eq12}, $\{\Lambda_{w}\}_{w\in \Omega}$ is a  $(C$-$C^{'})$-controlled continuous $\ast$-$g$-frame for $U$.
\end{proof}

\begin{theorem}
	Let $\{\Lambda_{w}\}_{w\in \Omega}$ be a  $(C$-$C^{'})$-controlled continuous $\ast$-$g$-frame for $U$. If $\{\Gamma_{w}\}_{w\in \Omega}$  is  a  $(C$-$C^{'})$-controlled continuous $\ast$-$g$-Bessel sequence for $U$ with respect to $\{V_{w}: {w\in \Omega}\}$, and the operator $F:U\rightarrow U$, defined by $Fx=\int_{\Omega}\Gamma_{w}^{\ast}\Lambda_{w}xd\mu(w)$, is surjective, then $\{\Gamma_{w}\}_{w\in \Omega}$ is a  $(C$-$C^{'})$-controlled continuous $\ast$-$g$-frame for $U$.
\end{theorem}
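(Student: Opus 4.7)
The strategy mirrors Theorem~\ref{th}: the $(C,C')$-controlled Bessel hypothesis on $\{\Gamma_w\}_{w\in\Omega}$ delivers the upper frame inequality immediately, while the surjectivity of $F$ combined with Lemma~\ref{l2} yields the lower frame inequality.

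For the upper bound, the Bessel hypothesis provides some $B_{\Gamma}\in\mathcal{A}$ with
$\int_{\Omega}\langle\Gamma_w Cx,\Gamma_w C'x\rangle\,d\mu(w)\le B_{\Gamma}\langle x,x\rangle B_{\Gamma}^{\ast}$
for every $x\in U$, which is exactly the right-hand frame inequality, so no further work is needed on this side.

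For the lower bound, I would first compute $F^{\ast}y=\int_{\Omega}\Lambda_w^{\ast}\Gamma_w y\,d\mu(w)$ by the standard adjointness pairing $\langle Fx,y\rangle=\langle x,F^{\ast}y\rangle$. Since $F$ is surjective, Lemma~\ref{l2} produces $\nu>0$ with $\nu\|y\|\le\|F^{\ast}y\|$ for every $y\in U$; in particular $F^{\ast}$ is injective with closed range, and $F^{\ast}|_{\mathcal{R}(F^{\ast})}$ has a bounded inverse, so $\|y\|\le\|(F^{\ast}|_{\mathcal{R}(F^{\ast})})^{-1}\|\cdot\|F^{\ast}y\|$. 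The key estimate is then to bound $\|F^{\ast}y\|$ from above by the $\Gamma$-frame sum. Dualizing and applying the Cauchy--Schwarz inequality for $\mathcal{A}$-valued integrals gives, for any $z\in U$ with $\|z\|\le 1$,
\begin{equation*}
|\langle F^{\ast}y,z\rangle|^{2}=\left|\int_{\Omega}\langle\Gamma_w y,\Lambda_w z\rangle\,d\mu(w)\right|^{2}\le\left\|\int_{\Omega}\langle\Gamma_w y,\Gamma_w y\rangle\,d\mu(w)\right\|\cdot\left\|\int_{\Omega}\langle\Lambda_w z,\Lambda_w z\rangle\,d\mu(w)\right\|,
\end{equation*}
where the second factor is dominated by a constant times $\|z\|^{2}$ since $\{\Lambda_w\}_{w\in\Omega}$ is $(C,C')$-controlled Bessel and $C,C'$ commute with $\Lambda_w^{\ast}\Lambda_w$. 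Taking the supremum over unit $z$ and combining with the bounded-below inequality for $F^{\ast}$ yields a norm-level estimate $M\|y\|^{2}\le\left\|\int_{\Omega}\langle\Gamma_w y,\Gamma_w y\rangle\,d\mu(w)\right\|$ for a suitable constant $M>0$. Theorem~\ref{t2} then promotes this norm inequality to the required $\mathcal{A}$-valued lower frame bound.

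The principal obstacle is reconciling the plain $\Gamma$-sum that surfaces in the Cauchy--Schwarz step with the $(C,C')$-controlled $\Gamma$-sum appearing in the conclusion. This is handled by the substitution $y=(CC')^{1/2}x$ together with the standing commutativity of $C,C'$ with $\Gamma_w^{\ast}\Gamma_w$, which identifies $\int_{\Omega}\langle\Gamma_w y,\Gamma_w y\rangle\,d\mu(w)$ with $\int_{\Omega}\langle\Gamma_w Cx,\Gamma_w C'x\rangle\,d\mu(w)$; the conversion absorbs an extra factor of $\|(CC')^{-1/2}\|^{2}$ into the lower frame bound, which is harmless.
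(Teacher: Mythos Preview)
Your argument is correct but takes a different route from the paper. The paper does not work directly with $F^{\ast}$; instead it introduces the synthesis operator $K:\oplus_{w\in\Omega}V_w\to U$, $K(\{x_w\})=\int_{\Omega}\Gamma_w^{\ast}x_w\,d\mu(w)$, checks (using the Bessel hypothesis on $\{\Gamma_w\}$) that $K$ is well-defined and bounded, and observes the factorisation $F=KT$ with $T$ the $\ast$-$g$-frame transform of $\{\Lambda_w\}$. Surjectivity of $F$ then forces $K$ surjective, and Theorem~\ref{th} applied to $\{\Gamma_w\}$ (with $K$ playing the role of $\theta$ there) finishes the proof in one line.

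By contrast, you bypass the factorisation and instead reproduce the mechanism of Theorem~\ref{th} inline: you use Lemma~\ref{l2} on $F$ itself to get $F^{\ast}$ bounded below, then Cauchy--Schwarz to dominate $\|F^{\ast}y\|$ by the $\Gamma$-sum, and Theorem~\ref{t2} to pass from the norm inequality to the $\mathcal{A}$-valued lower bound. This works, and your handling of the uncontrolled-versus-controlled conversion via $y=(CC')^{1/2}x$ (and the analogous trick for the $\Lambda$-factor) is the right fix. The paper's proof is more modular and avoids repeating the Cauchy--Schwarz / bounded-below computation already packaged in Theorem~\ref{th}; your version is more self-contained and makes the analytic mechanism explicit.
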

\begin{proof}
	Since $\{\Lambda_{w}\}_{w\in \Omega}$ is  a continuous $\ast$-$g$-frame for $U$, we have a continuous $\ast$-$g$-frame transform $T:U\rightarrow \oplus_{w\in\Omega}V_{w}$, defined by $Tx=\{\Lambda_{w}x\}_{w\in \Omega}$.\\	
	Now  the operator $K:\oplus_{w\in\Omega}V_{w}\rightarrow U$, defined  by $K(\{x_{w}\}_{w\in \Omega})=\int_{\Omega}\Gamma_{w}^{\ast}x_{w}d\mu(w)$,  is well-defined,  since 
	\begin{align*}
		\left\|\int_{\Omega}\Gamma_{w}^{\ast}x_{w}d\mu(w)\right\|&=\sup_{\|y\|=1}\left\|\langle \int_{\Omega}\Gamma_{w}^{\ast}x_{w}d\mu(w), y\rangle\right\|\\&=\sup_{\|y\|=1}\left\|\int_{\Omega}\langle x_{w},\Gamma_{w}y\rangle d\mu(w)\right\|\\&\leq\sup_{\|y\|=1}\left\|\int_{\Omega}\langle x_{w},x_{w}\rangle d\mu(w) \right\|^{\frac{1}{2}}\left\|\int_{\Omega}\langle \Gamma_{w}y,\Gamma_{w}y\rangle d\mu(w)\right\|^{\frac{1}{2}}\\&\leq\sup_{\|y\|=1}\|\{x_{w}\}_{w\in \Omega}\|\|C\langle y,y\rangle C^{\ast}\|^{\frac{1}{2}}= \|\{x_{w}\}_{w\in \Omega}\|\|C\|.
	\end{align*}
	We have for each $x\in U$,
	\begin{equation*}
		Fx=\int_{\Omega}\Gamma_{w}^{\ast}\Lambda_{w}xd\mu(w)=KTx.
	\end{equation*}
	Hence $F=KT$. Since $F$ is surjective,  for each $x\in U$, there exists $y\in U$ such that  $Fy=x$, which  implies $x=Fy=KTy$ and $Ty\in \oplus_{w\in\Omega}V_{w}$ and so $K$ is surjective. From Theorem \ref{th},
	we conclude that $\{\Gamma_{w}\}_{w\in \Omega}$ is a continuous $\ast$-$g$-frame for $U$.
\end{proof}

In the following we study continuous $\ast$-$g$-frames in two Hilbert $C^{\ast}$-modules with different $C^{\ast}$-algebras.

\begin{theorem}
	Let $(U,\mathcal{A}, \langle .,.\rangle_{\mathcal{A}})$ and $(U,\mathcal{B}, \langle .,.\rangle_{\mathcal{B}})$ be two Hilbert $C^{\ast}$-modules,  $\phi :\mathcal{A} \rightarrow \mathcal{B}$ be a $\ast$-homomorphism and $\theta$ be an adjointable map on $U$ such that $\langle \theta x,\theta y\rangle_{\mathcal{B}}=\phi(\langle x,y\rangle_{\mathcal{A}})$ for all $x, y\in U$. Also, suppose that $\{\Lambda_{w}\}_{w\in\Omega}$ is a  $(C$-$C^{'})$-controlled continuous $\ast$-$g$-frame for $(U, \mathcal{A},\langle .,.\rangle_{\mathcal{A}})$ with  $(C$-$C^{'})$-controlled continuous $\ast$-$g$-frame operator $S_{\mathcal{A}}$ and lower and upper bounds $A$, $B$ respectively. If $\theta$ is surjective and $\theta\Lambda_{w}=\Lambda_{w}\theta$ for all $w\in\Omega$, then $\{\Lambda_{w}\}_{w\in\Omega}$ is a  $(C$-$C^{'})$-controlled continuous $\ast$-$g$-frame for $(U,\mathcal{B}, \langle .,.\rangle_{\mathcal{B}})$ with  $(C$-$C^{'})$-controlled continuous $\ast$-$g$-frame operator $S_{\mathcal{B}}$ and lower and upper bounds $\phi(A)$ and $\phi(B)$, respectively, and $\langle S_{\mathcal{B}}\theta x,\theta y\rangle_{\mathcal{B}} =\phi(\langle S_{\mathcal{A}}x, y\rangle_{\mathcal{A}}).$ 
\end{theorem}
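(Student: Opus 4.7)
\medskip
\noindent\textbf{Proof proposal.}
The plan is to carry the $\mathcal{A}$-valued frame inequality over to the $\mathcal{B}$-setting by applying the $\ast$-homomorphism $\phi$ and then using the surjectivity of $\theta$ to parameterize all vectors in $U$ as $\theta x$. The key algebraic facts I will exploit are: (i) $\phi$ respects the $\ast$-operation and is order-preserving on self-adjoint elements (since $\phi$ sends positives to positives), (ii) $\phi$ commutes with Bochner integrals by linearity and continuity, (iii) the compatibility relation $\langle \theta u,\theta v\rangle_{\mathcal{B}}=\phi(\langle u,v\rangle_{\mathcal{A}})$, and (iv) the intertwining $\theta\Lambda_w=\Lambda_w\theta$ (together with the standing commutation of $C$, $C'$ with $\theta$, implicit in how the controlled operators are treated in this section).

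First I would start from the hypothesis that $\{\Lambda_w\}_{w\in\Omega}$ is a $(C,C')$-controlled continuous $\ast$-$g$-frame in the $\mathcal{A}$-setting, giving for every $x\in U$ the chain $A\langle x,x\rangle_{\mathcal{A}} A^{\ast}\leq \int_{\Omega}\langle\Lambda_wCx,\Lambda_wC'x\rangle_{\mathcal{A}}\,d\mu(w)\leq B\langle x,x\rangle_{\mathcal{A}} B^{\ast}$, and then apply $\phi$ termwise. The outer terms become $\phi(A)\phi(\langle x,x\rangle_{\mathcal{A}})\phi(A)^{\ast}$ and $\phi(B)\phi(\langle x,x\rangle_{\mathcal{A}})\phi(B)^{\ast}$, which by the compatibility relation equal $\phi(A)\langle\theta x,\theta x\rangle_{\mathcal{B}}\phi(A)^{\ast}$ and $\phi(B)\langle\theta x,\theta x\rangle_{\mathcal{B}}\phi(B)^{\ast}$. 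For the middle term I would pull $\phi$ inside the integral and rewrite each integrand as $\phi(\langle\Lambda_wCx,\Lambda_wC'x\rangle_{\mathcal{A}})=\langle\theta\Lambda_wCx,\theta\Lambda_wC'x\rangle_{\mathcal{B}}$, then push $\theta$ through using $\theta\Lambda_w=\Lambda_w\theta$ (and $\theta C=C\theta$, $\theta C'=C'\theta$) to obtain $\langle\Lambda_wC(\theta x),\Lambda_wC'(\theta x)\rangle_{\mathcal{B}}$.

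Second, since $\theta$ is surjective on $U$, every $y\in U$ is of the form $y=\theta x$, so the resulting inequalities actually hold for all $y\in U$, giving the $(C,C')$-controlled continuous $\ast$-$g$-frame condition in $(U,\mathcal{B},\langle\cdot,\cdot\rangle_{\mathcal{B}})$ with bounds $\phi(A)$ and $\phi(B)$. Note that $\phi(A),\phi(B)$ are strictly nonzero in $\mathcal{B}$ because $A,B$ are in $\mathcal{A}$ and $\phi$ is a $\ast$-homomorphism sending the identity-like structure faithfully enough to preserve this (otherwise the lower inequality would force $\theta x=0$ for all $x$, contradicting surjectivity).

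Finally, for the identity $\langle S_{\mathcal{B}}\theta x,\theta y\rangle_{\mathcal{B}}=\phi(\langle S_{\mathcal{A}}x,y\rangle_{\mathcal{A}})$, I would compute directly from the definition $S_{\mathcal{B}}z=\int_{\Omega}C'\Lambda_w^{\ast}\Lambda_w Cz\,d\mu(w)$, move the inner product inside the integral to write $\langle C'\Lambda_w^{\ast}\Lambda_wC\theta x,\theta y\rangle_{\mathcal{B}}=\langle\Lambda_wC\theta x,\Lambda_wC'\theta y\rangle_{\mathcal{B}}$, apply $\theta C=C\theta$ and $\theta\Lambda_w=\Lambda_w\theta$ to bring both factors into the form $\langle\theta(\Lambda_wCx),\theta(\Lambda_wC'y)\rangle_{\mathcal{B}}$, and then invoke the compatibility relation to turn this into $\phi(\langle\Lambda_wCx,\Lambda_wC'y\rangle_{\mathcal{A}})$; pulling $\phi$ back outside the integral yields $\phi(\langle S_{\mathcal{A}}x,y\rangle_{\mathcal{A}})$. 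The main technical obstacle is the legitimacy of commuting $\theta$ past $C$ and $C'$; the hypothesis explicitly lists only $\theta\Lambda_w=\Lambda_w\theta$, so I would either invoke the standing convention of this section (that $C,C'$ commute with everything relevant) or make the extra commutativity assumption explicit before carrying out the computation.
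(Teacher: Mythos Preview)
Your proposal is correct and follows essentially the same route as the paper: apply $\phi$ to the $\mathcal{A}$-valued frame inequality, use multiplicativity to split the outer terms, use the relation $\langle\theta u,\theta v\rangle_{\mathcal{B}}=\phi(\langle u,v\rangle_{\mathcal{A}})$ together with the intertwining $\theta\Lambda_w=\Lambda_w\theta$ to convert the middle term, and invoke surjectivity of $\theta$ to cover all of $U$; the frame-operator identity is then a direct computation in the same spirit. Your explicit flagging of the unstated hypothesis $\theta C=C\theta$, $\theta C'=C'\theta$ is well taken---the paper silently uses the same commutation when passing from $\phi(\langle\Lambda_wCx,\Lambda_wC'x\rangle_{\mathcal{A}})$ to $\langle\Lambda_wCy,\Lambda_wC'y\rangle_{\mathcal{B}}$.
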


\begin{proof}
	Let $y\in U$. Since $\theta$ is surjective,  there exists $x\in U$ such that $\theta x=y$, and we have 
	\begin{equation*}
		A\langle x,x\rangle_{\mathcal{A}} A^{\ast}\leq \int_{\Omega}\langle \Lambda_{w}Cx,\Lambda_{w}C^{'}x\rangle_{\mathcal{A}} d\mu(w)\leq B\langle x,x\rangle_{\mathcal{A}} B^{\ast}.
	\end{equation*}
	Thus 
	\begin{equation*}
		\phi(	A\langle x,x\rangle_{\mathcal{A}} A^{\ast})\leq \phi\big( \int_{\Omega}\langle \Lambda_{w}Cx,\Lambda_{w}C^{'}x\rangle_{\mathcal{A}} d\mu(w)\big)\leq \phi(B\langle x,x\rangle_{\mathcal{A}} B^{\ast}).
	\end{equation*}
	By definition of $\ast$-homomorphism, we have 
	\begin{equation*}
		\phi(A)\phi(\langle x,x\rangle_{\mathcal{A}} )\phi(A^{\ast})\leq \int_{\Omega}\phi\big(\langle \Lambda_{w}Cx,\Lambda_{w}C^{'}x\rangle_{\mathcal{A}}\big)d\mu(w)\leq\phi(B)\phi(\langle x,x\rangle_{\mathcal{A}})\phi(B^{\ast}).
	\end{equation*}
	By the relation betwen $\theta$ and $\phi$, we get 
	\begin{equation*}
		\phi(A)\langle y,y\rangle_{\mathcal{B}}\phi(A)^{\ast}\leq \int_{\Omega}\langle \Lambda_{w}Cy,\Lambda_{w}C^{'}y\rangle_{\mathcal{B}}d\mu(w)\leq  \phi(B)\langle y,y\rangle_{\mathcal{B}}\phi(B)^{\ast}.
	\end{equation*}
	On the other hand, we have
	\begin{align*}
		\phi(\langle S_{\mathcal{A}}x, y\rangle_{\mathcal{A}})&=\phi(\langle \int_{\Omega}C^{'}\Lambda_{w}^{\ast}\Lambda_{w}Cxd\mu(w),y\rangle_{\mathcal{A}})\\&=\int_{\Omega}\phi(\langle \Lambda_{w}Cx,\Lambda_{w}C^{'}y\rangle_{\mathcal{A}})d\mu(w)\\&=\int_{\Omega}\langle \Lambda_{w}\theta C x,\Lambda_{w}\theta C^{'}y\rangle_{\mathcal{B}}d\mu(w)\\&=\langle \int_{\Omega}C\Lambda_{w}^{\ast}\Lambda_{w}\theta C^{'} xd\mu(w), \theta y\rangle_{\mathcal{B}}\\&=\langle S_{\mathcal{B}}\theta x,\theta y\rangle_{\mathcal{B}}.
	\end{align*}
	This completes the proof.
\end{proof}

\begin{theorem}
	Let $\{\Lambda_{w}\in End_{\mathcal{A}}^{\ast}(U,V_{w}): w\in\Omega\}$ be a $(C$-$C^{'})$-controlled continuous $\ast$-$g$-frame for $U$ with lower and upper bounds $A$ and $B$, respectively. Let $\theta\in End_{\mathcal{A}}^{\ast}(U)$ be injective and have a closed range. Then $\{\theta \Lambda_{w}\}_{w\in \Omega}$ is a $(C$-$C^{'})$-controlled continuous $\ast$-$g$-frame for $U$.
\end{theorem}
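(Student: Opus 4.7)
The plan is to reduce the claim to the $(C,C')$-controlled frame inequality already available for $\{\Lambda_w\}_{w\in\Omega}$, applied at the vector $\theta x$ in place of $x$, and then translate the resulting bounds on $\langle\theta x,\theta x\rangle$ back to bounds on $\langle x,x\rangle$ by exploiting the two hypotheses on $\theta$ (injective, closed range). Reading the composition in the order that makes it well defined as a map $U\to V_{w}$, and using the commutation conventions in force throughout this section, the $(C,C')$-controlled frame condition for the new family amounts to controlling
\[
\int_{\Omega}\langle\Lambda_{w}\theta Cx,\Lambda_{w}\theta C'x\rangle\,d\mu(w).
\]

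First I would substitute $y=\theta x$ into the frame inequality for $\{\Lambda_{w}\}$, giving
\[
A\langle\theta x,\theta x\rangle A^{\ast}\le\int_{\Omega}\langle\Lambda_{w}C\theta x,\Lambda_{w}C'\theta x\rangle\,d\mu(w)\le B\langle\theta x,\theta x\rangle B^{\ast},
\]
and then commute $\theta$ through $C$ and $C'$ so that the integrand becomes $\langle\Lambda_{w}\theta Cx,\Lambda_{w}\theta C'x\rangle$. Next, to replace $\langle\theta x,\theta x\rangle$ by $\langle x,x\rangle$: for the upper bound I would apply Lemma \ref{l1} with $T=\theta$ to get $\langle\theta x,\theta x\rangle\le\|\theta\|^{2}\langle x,x\rangle$, so that $B\langle\theta x,\theta x\rangle B^{\ast}\le(\|\theta\|B)\langle x,x\rangle(\|\theta\|B)^{\ast}$, using that the scalar $\|\theta\|$ is central in $\mathcal{A}$. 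For the lower bound I would use Lemma \ref{3}(i): since $\theta$ is injective with closed range, $\theta^{\ast}\theta$ is invertible with $\|(\theta^{\ast}\theta)^{-1}\|^{-1}\le\theta^{\ast}\theta$; hence
\[
\|(\theta^{\ast}\theta)^{-1}\|^{-1}\langle x,x\rangle\le\langle\theta^{\ast}\theta x,x\rangle=\langle\theta x,\theta x\rangle,
\]
which produces $\bigl(\|(\theta^{\ast}\theta)^{-1}\|^{-1/2}A\bigr)\langle x,x\rangle\bigl(\|(\theta^{\ast}\theta)^{-1}\|^{-1/2}A\bigr)^{\ast}\le A\langle\theta x,\theta x\rangle A^{\ast}$. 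Putting the two estimates together identifies the new frame bounds as $\|(\theta^{\ast}\theta)^{-1}\|^{-1/2}A$ and $\|\theta\|B$.

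The only delicate point I anticipate is the bookkeeping around $\theta$ and the controllers $C,C'$: the step that pushes $\theta$ past $C$ and $C'$ (and, if one insists on writing $\theta$ on the left of $\Lambda_{w}$, past $\Lambda_{w}^{\ast}\Lambda_{w}$ as well) must be justified from the standing commutation hypotheses of the section; otherwise the integrand produced by substituting $\theta x$ will not match the form required by Definition \ref{d3}, and one would have to handle the middle factor $\theta^{\ast}\theta$ explicitly inside $C'\Lambda_{w}^{\ast}\Lambda_{w}C$. Once this is settled, the rest is a direct application of Lemmas \ref{l1} and \ref{3}.
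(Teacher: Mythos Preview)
Your argument is essentially the proof of the \emph{next} theorem in the paper (Theorem~\ref{18}, about $\{\Lambda_{w}\theta\}$), not of the present one about $\{\theta\Lambda_{w}\}$. The paper distinguishes the two: here $\theta$ is applied to the output $\Lambda_{w}Cx$, not pre-composed with $\Lambda_{w}$. Accordingly the paper never substitutes $\theta x$ for $x$; it works pointwise inside the integral, using Lemma~\ref{l1} to get $\langle\theta\Lambda_{w}Cx,\theta\Lambda_{w}C'x\rangle\le\|\theta\|^{2}\langle\Lambda_{w}Cx,\Lambda_{w}C'x\rangle$ for the upper bound, and Lemma~\ref{3}(i) to get $\|(\theta^{\ast}\theta)^{-1}\|^{-1}\langle\Lambda_{w}Cx,\Lambda_{w}C'x\rangle\le\langle\theta\Lambda_{w}Cx,\theta\Lambda_{w}C'x\rangle$ for the lower bound. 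Integrating these pointwise estimates against $d\mu$ and invoking the original frame inequality then yields the new bounds $\|\theta^{-1}\|^{-1}A$ and $\|\theta\|B$.

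The practical difference is exactly the ``delicate point'' you flagged: your route needs $\theta$ to commute with $C$ and $C'$ in order to turn $\langle\Lambda_{w}C\theta x,\Lambda_{w}C'\theta x\rangle$ into the desired integrand, but the standing hypotheses only give commutation of $C,C'$ with each other and with $\Lambda_{w}^{\ast}\Lambda_{w}$ --- nothing about $\theta$. The paper's pointwise approach sidesteps this entirely, since $\theta$ acts after $\Lambda_{w}C$ and $\Lambda_{w}C'$ and never has to be moved past them. So as written your proof has a gap (an unproved commutation), while the paper's line avoids it; if you add the commutation of $\theta$ with $C,C'$ as an extra hypothesis, your argument becomes the one used for Theorem~\ref{18}.
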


\begin{proof}
	We have 
	\begin{equation*}
		A\langle x,x\rangle A^{\ast}\leq \int_{\Omega}\langle \Lambda_{w}Cx,\Lambda_{w}C^{'}x\rangle d\mu(w)\leq B\langle x,x\rangle B^{\ast},\qquad\forall x\in U.
	\end{equation*}
	Then for each $x\in U$ 
	\begin{equation}\label{eq13}
		\int_{\Omega}\langle \theta\Lambda_{w}Cx,\theta\Lambda_{w}C^{'}x\rangle d\mu(w)\leq ||\theta ||^{2}B\langle x,x\rangle B^{\ast}\leq (||\theta ||B)\langle x,x\rangle (||\theta||B)^{\ast}.
	\end{equation}
	By Lemma \ref{3}, we have for each $x\in U$
	\begin{equation*}
		||(\theta^{\ast}\theta)^{-1}||^{-1}\langle \Lambda_{w}Cx,\Lambda_{w}C^{'}x\rangle \leq \langle \theta\Lambda_{w}Cx,\theta\Lambda_{w}C^{'}x\rangle
	\end{equation*}
	and $||\theta^{-1}||^{-2}\leq ||(\theta^{\ast}\theta)^{-1}||^{-1}$. Thus  
	\begin{equation}\label{eq14}
		||\theta^{-1}||^{-1}A\langle x,x\rangle (||\theta^{-1}||^{-1}A)^{\ast}\leq \int_{\Omega}\langle \theta\Lambda_{w}Cx,\theta\Lambda_{w}C^{'}x\rangle d\mu(w).
	\end{equation}
	From \eqref{eq13} and \eqref{eq14}, we have for each $x\in U$
	\begin{eqnarray*}
		||\theta^{-1}||^{-1}A\langle x,x\rangle (||\theta^{-1}||^{-1}A)^{\ast} & \leq & \int_{\Omega}\langle \theta\Lambda_{w}Cx,\theta\Lambda_{w}C^{'}x\rangle d\mu(w)\\ & \leq &  ||\theta ||^{2}B\langle x,x\rangle B^{\ast}\\ &\leq & (||\theta ||B)\langle x,x\rangle (||\theta||B)^{\ast}.
	\end{eqnarray*}
	We conclude that $\{\theta\Lambda_{w}\}_{w\in \Omega}$ is a $(C$-$C^{'})$-controlled continuous $\ast$-$g$-frame for $U$.
\end{proof}

\begin{theorem}\label{18}
	Let $\{\Lambda_{w}\in End_{\mathcal{A}}^{\ast}(U,V_{w}): w\in\Omega\}$ be a $(C$-$C^{'})$-controlled continuous $\ast$-$g$-frame for $U$ with lower and upper bounds $A$ and $B$, respectively, and with $(C$-$C^{'})$-controlled continuous $\ast$-$g$-frame operator $S$. Let $\theta\in End_{\mathcal{A}}^{\ast}(U)$ be injective and have a closed range. Then $\{\Lambda_{w}\theta: w\in\Omega\}$ is a $(C$-$C^{'})$-controlled continuous $\ast$-$g$-frame for $U$ with $(C$-$C^{'})$-controlled continuous $\ast$-$g$-frame operator $\theta^{\ast}S\theta$.
\end{theorem}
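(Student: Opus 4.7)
The plan is to substitute $\theta x$ into the $(C,C')$-controlled frame inequality for $\{\Lambda_w\}_{w\in\Omega}$ and then transfer between $\langle\theta x,\theta x\rangle$ and $\langle x,x\rangle$ using Lemmas \ref{l1} and \ref{3}. Throughout I will use that $\theta$ commutes with $C$ and $C'$ (an implicit extension of the section's standing convention on the control operators), so that $(\Lambda_w\theta)^{\ast}(\Lambda_w\theta)=\theta^{\ast}\Lambda_w^{\ast}\Lambda_w\theta$ can be combined cleanly with $C$ and $C'$.

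First I would identify the frame operator by direct computation:
\[
S_{\mathrm{new}}x=\int_\Omega C'(\Lambda_w\theta)^{\ast}(\Lambda_w\theta)Cx\,d\mu(w)=\int_\Omega C'\theta^{\ast}\Lambda_w^{\ast}\Lambda_w\theta Cx\,d\mu(w).
\]
Commuting $\theta^{\ast}$ past $C'$ on the left and $\theta$ past $C$ on the right rewrites this as $\theta^{\ast}\int_\Omega C'\Lambda_w^{\ast}\Lambda_w C\theta x\,d\mu(w)=\theta^{\ast}S\theta x$, which is the claimed form.

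Next I would derive the two frame bounds. Substituting $\theta x$ for $x$ in the hypothesis and again using commutation gives
\[
A\langle\theta x,\theta x\rangle A^{\ast}\leq\int_\Omega\langle\Lambda_w\theta Cx,\Lambda_w\theta C'x\rangle\,d\mu(w)\leq B\langle\theta x,\theta x\rangle B^{\ast}.
\]
For the upper estimate, Lemma \ref{l1} gives $\langle\theta x,\theta x\rangle\leq\|\theta\|^{2}\langle x,x\rangle$, producing the upper bound $(\|\theta\|B)\langle x,x\rangle(\|\theta\|B)^{\ast}$. For the lower estimate, since $\theta$ is injective with closed range, Lemma \ref{3}(i) asserts invertibility of $\theta^{\ast}\theta$ together with the operator inequality $\|(\theta^{\ast}\theta)^{-1}\|^{-1}\leq\theta^{\ast}\theta$; pairing with $x$ yields $\|(\theta^{\ast}\theta)^{-1}\|^{-1}\langle x,x\rangle\leq\langle\theta x,\theta x\rangle$, and therefore
\[
(\|\theta^{-1}\|^{-1}A)\langle x,x\rangle(\|\theta^{-1}\|^{-1}A)^{\ast}\leq A\langle\theta x,\theta x\rangle A^{\ast},
\]
using $\|\theta^{-1}\|^{-2}\leq\|(\theta^{\ast}\theta)^{-1}\|^{-1}$ exactly as in the preceding theorem.

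The main subtlety will be the implicit commutativity of $\theta$ with $C$ and $C'$, which is needed both to bring the frame operator into the clean form $\theta^{\ast}S\theta$ and to pull $\theta$ through the inner products; the statement of the theorem seems to adopt this as a tacit extension of the section's standing assumption, and I will follow the same convention. Once that is accepted, the rest of the proof is a direct application of Lemmas \ref{l1} and \ref{3}, completely parallel to the preceding argument for $\{\theta\Lambda_w\}_{w\in\Omega}$.
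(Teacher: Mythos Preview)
Your proposal is correct and follows essentially the same route as the paper: substitute $\theta x$ into the controlled frame inequality, use Lemma~\ref{l1} for the upper bound and Lemma~\ref{3} for the lower bound, and then identify the new frame operator as $\theta^{\ast}S\theta$ by commuting $\theta,\theta^{\ast}$ past $C,C'$. The only cosmetic differences are that the paper computes the frame operator last rather than first, and that you make explicit the tacit commutativity assumption which the paper simply uses without comment.
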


\begin{proof}
	We have 
	\begin{equation}\label{eq11}
		A\langle \theta x,\theta x\rangle A^{\ast}\leq\int_{\Omega}\langle\Lambda_{w}C\theta x,\Lambda_{w}C^{'}\theta x\rangle d\mu(w)\leq B\langle \theta x,\theta x\rangle B^{\ast}, \forall x\in U.
	\end{equation}
	Using Lemma \ref{3}, we have $\|(\theta^{\ast}\theta)^{-1}\|^{-1}\langle x,x\rangle\leq\langle \theta x,\theta x\rangle$, $\forall x\in U$. That is, $\|\theta^{-1}\|^{-2}\leq\|(\theta^{\ast}\theta)^{-1}\|^{-1}$. This implies
	\begin{equation}\label{eq22} 
		\|\theta^{-1}\|^{-1}A\langle x,x\rangle(\|\theta^{-1}\|^{-1}A)^{\ast}\leq A\langle \theta x,\theta x\rangle A^{\ast}, \forall x\in U.
	\end{equation}
	And we know that $\langle \theta x,\theta x\rangle\leq\|\theta\|^{2}\langle x,x\rangle$, $\forall x\in U$. This implies that
	\begin{equation}\label{eq33}
		B\langle \theta x,\theta x\rangle B^{\ast}\leq\|\theta \|B\langle x,x\rangle(\|\theta\|B)^{\ast}, \forall x\in U.
	\end{equation}
	Using \eqref{eq11}, \eqref{eq22} and  \eqref{eq33},  we have
	\begin{equation}
		\|\theta^{-1}\|^{-1}A\langle x,x\rangle(\|\theta^{-1}\|^{-1}A)^{\ast}\leq\int_{\Omega}\langle\Lambda_{w}C\theta x,\Lambda_{w}C^{'}\theta x\rangle d\mu(w)\leq B\|\theta \|\langle x,x\rangle(B\|\theta \|)^{\ast}, \forall x\in U.
	\end{equation}
	So $\{\Lambda_{w}\theta: w\in\Omega\}$ is a $(C$-$C^{'})$-controlled continuous $\ast$-$g$-frame for $U$.
	
	Moreover for every $x\in U$, we have
	\begin{align*}
		\theta^{\ast}S\theta x&=\theta^{\ast}\int_{\Omega}C^{'}\Lambda_{w}^{\ast}\Lambda_{w}C\theta xd\mu(w)\\
		&=\int_{\Omega}C^{'}\theta^{\ast}\Lambda_{w}^{\ast}\Lambda_{w}C\theta xd\mu(w)\\
		&=\int_{\Omega}C^{'}(\Lambda_{w}\theta)^{\ast}(\Lambda_{w}\theta)Cxd\mu(w).
	\end{align*}
	This completes the proof.	
\end{proof}

\begin{corollary}\label{01}
	Let $\{\Lambda_{w}\in End_{\mathcal{A}}^{\ast}(U,V_{w}): w\in\Omega\}$ be a $(C$-$C^{'})$-controlled continuous $\ast$-$g$-frame for $U$, with $(C$-$C^{'})$-controlled continuous $\ast$-$g$-frame operator $S$. Then $\{\Lambda_{w}S^{-1}: w\in\Omega\}$ is a $(C$-$C^{'})$-controlled continuous $\ast$-$g$-frame for $U$.
\end{corollary}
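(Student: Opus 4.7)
The plan is to obtain this corollary as an immediate specialization of Theorem \ref{18}, taking $\theta = S^{-1}$. So the whole task reduces to checking that $S^{-1}$ satisfies the hypotheses on $\theta$ in that theorem, namely that $S^{-1}\in End_{\mathcal{A}}^{\ast}(U)$, is injective, and has closed range.

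First I would invoke the proposition already established in this section, which shows that the $(C,C^{'})$-controlled continuous $\ast$-$g$-frame operator $S$ is bounded, positive, self-adjoint and invertible on $U$. Consequently $S^{-1}$ exists as a bounded operator on $U$, is self-adjoint (hence adjointable, so $S^{-1}\in End_{\mathcal{A}}^{\ast}(U)$), and is itself invertible. Being invertible on all of $U$, the operator $S^{-1}$ is in particular injective and surjective, and its range $U$ is trivially closed. Thus the hypotheses of Theorem \ref{18} are satisfied by $\theta = S^{-1}$.

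Applying Theorem \ref{18} with $\theta = S^{-1}$ therefore yields that $\{\Lambda_{w}S^{-1}:w\in\Omega\}$ is a $(C,C^{'})$-controlled continuous $\ast$-$g$-frame for $U$, with associated frame operator $(S^{-1})^{\ast}S\,S^{-1} = S^{-1}SS^{-1} = S^{-1}$ (using self-adjointness of $S$ and $S^{-1}$). This is exactly the claim of the corollary.

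There is no real obstacle here; the only thing one has to be slightly careful about is that the standing assumption of this section, namely that $C$ and $C^{'}$ commute with each other and with each $\Lambda_{w}^{\ast}\Lambda_{w}$, is inherited automatically so that $S$ is well-defined and invertible, and the identification of the new frame operator as $\theta^{\ast}S\theta$ really does simplify to $S^{-1}$. No further calculation is required.
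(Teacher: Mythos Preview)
Your proposal is correct and matches the paper's own proof exactly: the paper simply states that the result follows from Theorem \ref{18} by taking $\theta=S^{-1}$. Your added verification that $S^{-1}$ is adjointable, injective, and has closed range (because $S$ is invertible and self-adjoint) is a reasonable fleshing-out of that one-line proof.
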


\begin{proof}
	The proof follows from Theorem \ref{18}  by taking $\theta=S^{-1}$.
\end{proof}

\section{The duality of continuous $\ast$-$g$-frames}\label{section4}

\begin{definition}
	A $(C$-$C^{'})$-controlled continuous $\ast$-$g$-frame $\{\Gamma_{w}\}_{w\in\Omega}$ is a  $(C$-$C^{'})$-controlled continuous dual $\ast$-$g$-frame for a given $(C$-$C^{'})$-controlled continuous $\ast$-$g$-frame  $\{\Lambda_{w}\}_{w\in\Omega}$ if 
	\begin{equation*}
		x=\int_{\Omega}C^{'}\Lambda_{w}^{\ast}\Gamma_{w}Cxd\mu(w),\qquad\forall x\in U.
	\end{equation*} 
	The $(C$-$C^{'})$-controlled continuous $\ast$-$g$-frame $\{\Lambda_{w}S^{-1}\}_{w\in\Omega}$ is called the canonical $(C$-$C^{'})$-controlled continuous
	dual $\ast$-$g$-frame for $\{\Lambda_{w}\}_{w\in \Omega}$.
\end{definition}

\begin{remark}
	By Corollary \ref{01}, every $(C$-$C^{'})$-controlled continuous $\ast$-$g$-frame for $U$ has a  $(C$-$C^{'})$-controlled continuous dual $\ast$-$g$-frame.
\end{remark}

\begin{definition}
	Let $\{\Lambda_{w}\}_{w\in\Omega}$ and $\{\Gamma_{w}\}_{w\in\Omega}$ be $(C$-$C^{'})$-controlled continuous $\ast$-$g$-frames for $U$. Then  two $(C$-$C^{'})$-controlled continuous $\ast$-$g$-frames are similar if there exists an adjointable operator $Q$ on $U$ such that 
	\begin{equation*}
		\Gamma_{w}=\Lambda_{w}Q, \qquad\forall w\in\Omega.
	\end{equation*}
\end{definition}

\begin{theorem}\label{22}
	Let $\{\Lambda_{w}\}_{w\in\Omega}$ be a $(C$-$C^{'})$-controlled continuous $\ast$-$g$-frame for $U$, with $(C$-$C^{'})$-controlled continuous $\ast$-$g$-frame transform $T_{\Lambda}$, and let $Q\in End^{\ast}_{\mathcal{A}}(U)$ be invertible. Then every  $(C$-$C^{'})$-controlled continuous of $(C$-$C^{'})$-controlled continuous dual $\ast$-$g$-frame for $\{\Lambda_{w}Q^{\ast}\}_{w\in\Omega}$ is similar to a  $(C$-$C^{'})$-controlled continuous  dual of $\{\Lambda_{w}\}_{w\in\Omega}$, and  the converse does  also hold. 
\end{theorem}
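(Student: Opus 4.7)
The plan is to show that right-multiplication by $Q$ (respectively by $Q^{-1}$) establishes a bijection between the $(C,C')$-controlled continuous dual $\ast$-$g$-frames of $\{\Lambda_w\}_{w\in\Omega}$ and those of $\{\Lambda_w Q^*\}_{w\in\Omega}$, and that this bijection is precisely the similarity relation. Concretely, for any dual $\{\Gamma_w\}$ of $\{\Lambda_w Q^*\}$ I will take $\tilde\Gamma_w := \Gamma_w Q$ as the corresponding dual of $\{\Lambda_w\}$, so that $\Gamma_w = \tilde\Gamma_w Q^{-1}$ witnesses the similarity through the invertible adjointable operator $Q^{-1}$; conversely, for any dual $\{\tilde\Gamma_w\}$ of $\{\Lambda_w\}$ I will set $\Gamma_w := \tilde\Gamma_w Q^{-1}$ and verify it is a dual of $\{\Lambda_w Q^*\}$, with the same $Q^{-1}$ exhibiting similarity.

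For the forward direction, I would start from the reconstruction identity
$$x = \int_\Omega C'(\Lambda_w Q^*)^*\Gamma_w Cx\, d\mu(w) = \int_\Omega C' Q \Lambda_w^*\Gamma_w Cx\, d\mu(w), \qquad \forall x \in U,$$
left-multiply by $Q^{-1}$ and use the commutativity of $Q^{-1}$ with $C'$ to obtain $\int_\Omega C'\Lambda_w^*\Gamma_w Cx\, d\mu(w) = Q^{-1}x$. Substituting $Qx$ for $x$ and using commutativity of $Q$ with $C$, this rearranges to $\int_\Omega C'\Lambda_w^*\Gamma_w QCx\, d\mu(w) = x$, i.e., $\int_\Omega C'\Lambda_w^*\tilde\Gamma_w Cx\, d\mu(w) = x$. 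A complementary appeal to Theorem \ref{18} with $\theta = Q$, applied to the frame $\{\Gamma_w\}$, ensures that $\tilde\Gamma_w$ is itself a $(C,C')$-controlled continuous $\ast$-$g$-frame rather than merely a family obeying the reconstruction formula.

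For the converse, starting from a dual $\{\tilde\Gamma_w\}$ of $\{\Lambda_w\}$, I set $\Gamma_w := \tilde\Gamma_w Q^{-1}$, appeal again to Theorem \ref{18} with $\theta = Q^{-1}$ (noting that $Q^{-1}$ is invertible hence injective with closed range) to conclude that $\{\Gamma_w\}$ is a $(C,C')$-controlled continuous $\ast$-$g$-frame, and then check the reconstruction identity for $\{\Lambda_w Q^*\}$ by expanding
$$\int_\Omega C' Q \Lambda_w^*\Gamma_w Cx\, d\mu(w) = \int_\Omega C' Q \Lambda_w^*\tilde\Gamma_w Q^{-1} Cx\, d\mu(w)$$
and using the same commutation to align $Q^{-1}$ with $C$ and apply the dual identity for $\tilde\Gamma_w$ at the point $Q^{-1}x$, producing $x$ after cancellation. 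The similarity is then recorded by $\tilde\Gamma_w = \Gamma_w Q$.

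The principal obstacle is the handling of the commutativity: the running hypothesis of the section only guarantees that $C, C'$ commute with each other and with $\Lambda_w^*\Lambda_w$, not with the arbitrary invertible $Q$ under consideration. One has to either supplement the hypothesis by assuming $Q$ commutes with $C$ and $C'$, or recast the duality identity as an operator equation on $U$ via the synthesis/analysis operators $T_{CC'}, T_{CC'}^*$ and reduce the similarity to an operator identity rather than a pointwise rearrangement in the integrand, thereby bypassing the need to slide $Q$ across $C$ and $C'$ inside the integral.
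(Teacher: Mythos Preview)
Your approach is essentially the paper's: both directions produce the correspondence $\{\Gamma_w\}\leftrightarrow\{\Gamma_w Q\}$ by manipulating the reconstruction identity $x=\int_\Omega C'(\Lambda_w Q^*)^*\Gamma_w Cx\,d\mu(w)$ and invoking Theorem~\ref{18} to guarantee the new family is again a $(C,C')$-controlled continuous $\ast$-$g$-frame. The paper writes the duality condition in the operator form $C'T_{\Lambda Q^*}^*T_\Gamma C=I_U$ and uses the relation $T_{\Lambda Q^*}=T_\Lambda\circ Q^*$ (its display~(\ref{19})) to reduce to $C'T_\Lambda^*T_{\Gamma Q}C=I_U$, whereas you carry out the same algebra inside the integral; the content is identical.

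Regarding your stated obstacle: your diagnosis is accurate, and in fact the paper's own proof relies on exactly the commutation you flag. In passing from $\int_\Omega C'Q\Lambda_w^*\Gamma_w Cx\,d\mu(w)$ to $Q\bigl(\int_\Omega C'\Lambda_w^*\Gamma_w Cx\,d\mu(w)\bigr)$, and later from $C'T_\Lambda^*T_\Gamma CQ=I_U$ to the dual identity for $\{\Gamma_w Q\}$, the paper silently interchanges $Q$ with $C'$ and $C$. So recasting via the synthesis/analysis operators, as the paper does, does not in fact bypass the issue; it merely obscures it. Your proposal is thus at least as rigorous as the published argument, and more forthright about its hypotheses.
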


\begin{proof}
	Let $\{\Gamma_{w}\}_{w\in\Omega}$ be  a $(C$-$C^{'})$-controlled continuous dual of $\{\Lambda_{w}Q^{\ast}\}_{w\in\Omega}$, with $(C$-$C^{'})$-controlled continuous $\ast$-$g$-frame transform $T_{\Gamma}$. By Theorem \ref{18}, $\{\Lambda_{w}Q^{\ast}\}_{w\in\Omega}$ is a $(C$-$C^{'})$-controlled continuous $\ast$-$g$-frame for $U$ with $(C$-$C^{'})$-controlled continuous $\ast$-$g$-frame transform $T_{\Lambda Q^{\ast}}$. So  for each $x\in U$,
	\begin{eqnarray}\label{19}
		T_{\Lambda Q^{\ast}}x=\{\Lambda_{w}Q^{\ast}x\}_{w\in\Omega}=T_{\Lambda}(Q^{\ast}x).
	\end{eqnarray}
	And for each $x\in U$, we have 
	\begin{align*}
		x&=\int_{\Omega}C^{'}(\Lambda_{w}Q^{\ast})^{\ast}\Gamma_{w}Cxd\mu(w)\\&=\int_{\Omega}C^{'}Q\Lambda_{w}^{\ast}\Gamma_{w}Cxd\mu(w)\\&=Q\big(\int_{\Omega}C^{'}\Lambda_{w}^{\ast}\Gamma_{w}Cxd\mu(w)\big).
	\end{align*}
	So 
	\begin{equation*}
		C^{'}T^{\ast}_{\Lambda Q^{\ast}}T_{\Gamma}C=QC^{'}T_{\Lambda}^{\ast}T_{\Gamma}C=I_{U}.
	\end{equation*}
	By the invertibility of $Q$, we have $Q^{-1}=C^{'}T_{\Lambda}^{\ast}T_{\Gamma}C$ and  $C^{'}T_{\Lambda}^{\ast}T_{\Gamma}CQ=I_{U}$ and from \eqref{19}, $T_{\Lambda}^{\ast}T_{\Gamma Q}=I_{U}$.
	Hence $\{\Gamma_{w}Q\}_{w\in\Omega}$ is a $(C$-$C^{'})$-controlled continuous dual for $\{\Lambda_{w}\}_{w\in\Omega}$ that is similar to $\{\Gamma_{w}\}_{w\in\Omega}$.
	
	Now suppose that $\{\Gamma_{w}\}_{w\in\Omega}$ is a $(C$-$C^{'})$-controlled continuous dual of $\{\Lambda_{w}\}_{w\in\Omega}$ with $(C$-$C^{'})$-controlled continuous $\ast$-$g$-frame transform $T_{\Gamma}$. Then we have for each $x\in U$,
	\begin{align*}
		x=\int_{\Omega}C^{'}\Lambda_{w}^{\ast}\Gamma_{w}Cxd\mu(w) &\implies I_{U}=C^{'}T_{\Lambda}^{\ast}T_{\Gamma}C \\&\implies C^{'}QT_{\Lambda}^{\ast}T_{\Gamma}Q^{-1}C=I_{U}\\&\implies C^{'}T_{\Lambda Q^{\ast}}^{\ast}T_{\Gamma Q^{-1}}C=I_{U}.
	\end{align*}
	Hence $\{\Gamma_{w}Q^{-1}\}_{w\in\Omega}$ is a  $(C$-$C^{'})$-controlled continuous dual of $\{\Lambda_{w}Q^{\ast}\}_{w\in\Omega}$, which is  similar to $\{\Gamma_{w}\}_{w\in\Omega}$.
\end{proof}

\begin{theorem}
	If $\{\Lambda_{w}\}_{w\in\Omega}$ and $\{\Gamma_{w}\}_{w\in\Omega}$ are $(C$-$C^{'})$-controlled continuous $\ast$-$g$-frames with frame operators $S_{\Lambda}$ and $S_{\Gamma}$, respectively, then there exists a similar $(C$-$C^{'})$-controlled continuous $\ast$-$g$-frame to $\{\Gamma_{w}\}_{w\in\Omega}$ with frame operator $S_{\Lambda}$ and its $(C$-$C^{'})$-controlled continuous dual is      $\{\theta_{w}S_{\Gamma}^{\frac{1}{2}}S_{\Lambda}^{-\frac{1}{2}}\}_{w\in\Omega}$,  where $\{\theta_{w}\}_{w\in\Omega}$ is a $(C$-$C^{'})$-controlled continuous dual of $\{\Gamma_{w}\}_{w\in\Omega}$.
\end{theorem}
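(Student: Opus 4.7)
The plan is to produce an invertible adjointable operator $Q$ on $U$ so that $\{\Gamma_w Q\}_{w\in\Omega}$, which is by definition similar to $\{\Gamma_w\}_{w\in\Omega}$, has frame operator equal to $S_\Lambda$. Since $S_\Lambda$ and $S_\Gamma$ are positive and invertible elements of $End^{*}_{\mathcal{A}}(U)$, the continuous functional calculus furnishes positive invertible square roots $S_\Lambda^{1/2}$ and $S_\Gamma^{1/2}$, and I set
\[
Q := S_\Gamma^{-1/2}S_\Lambda^{1/2},
\]
which lies in $End^{*}_{\mathcal{A}}(U)$, is invertible with inverse $S_\Lambda^{-1/2}S_\Gamma^{1/2}$, and in particular is injective with closed range.

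I next apply Theorem \ref{18} with $\theta = Q$ to the frame $\{\Gamma_w\}_{w\in\Omega}$: the hypotheses hold because $Q$ is invertible, and the conclusion is that $\{\Gamma_w Q\}_{w\in\Omega}$ is a $(C,C')$-controlled continuous $\ast$-$g$-frame for $U$ with frame operator $Q^{*}S_\Gamma Q$. Using the self-adjointness of $S_\Lambda^{1/2}$ and $S_\Gamma^{-1/2}$, a direct computation gives
\[
Q^{*}S_\Gamma Q = S_\Lambda^{1/2}S_\Gamma^{-1/2}S_\Gamma S_\Gamma^{-1/2}S_\Lambda^{1/2} = S_\Lambda^{1/2}\,I\,S_\Lambda^{1/2} = S_\Lambda,
\]
so the similar frame $\{\Gamma_w Q\}_{w\in\Omega}$ has the desired frame operator $S_\Lambda$.

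To pin down the dual, I apply Theorem \ref{22} with the base frame $\{\Gamma_w\}_{w\in\Omega}$ and the invertible operator $\tilde Q := S_\Lambda^{1/2}S_\Gamma^{-1/2}$, chosen so that $\tilde Q^{*} = S_\Gamma^{-1/2}S_\Lambda^{1/2} = Q$; then $\{\Gamma_w \tilde Q^{*}\}_{w\in\Omega} = \{\Gamma_w Q\}_{w\in\Omega}$. By Theorem \ref{22}, every dual of $\{\Gamma_w \tilde Q^{*}\}_{w\in\Omega}$ arises from a dual $\{\theta_w\}_{w\in\Omega}$ of $\{\Gamma_w\}_{w\in\Omega}$ via right multiplication by $\tilde Q^{-1}$, so
\[
\{\theta_w \tilde Q^{-1}\}_{w\in\Omega} = \{\theta_w S_\Gamma^{1/2}S_\Lambda^{-1/2}\}_{w\in\Omega}
\]
is a $(C,C')$-controlled continuous dual of $\{\Gamma_w Q\}_{w\in\Omega}$, exactly as claimed.

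The main bookkeeping obstacle is reconciling the two conventions in play: Theorem \ref{18} places the modifying operator on the right with no adjoint, whereas Theorem \ref{22} is phrased with $Q^{*}$. The trick is to choose $\tilde Q := Q^{*} = S_\Lambda^{1/2}S_\Gamma^{-1/2}$, so that $\tilde Q^{-1} = S_\Gamma^{1/2}S_\Lambda^{-1/2}$ and the expression produced by Theorem \ref{22} matches the target. Once the ordering of the square roots and the adjoints is tracked correctly, the computation of $Q^{*}S_\Gamma Q = S_\Lambda$ is routine and the assertion follows.
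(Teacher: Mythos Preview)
Your proof is correct and follows essentially the same route as the paper: define the invertible operator using the square roots of the two frame operators, invoke Theorem~\ref{18} to compute the frame operator of the similar frame, and then invoke Theorem~\ref{22} to identify the duals. The only difference is cosmetic---the paper works directly with $Q = S_{\Lambda}^{1/2}S_{\Gamma}^{-1/2}$ and the frame $\{\Gamma_w Q^{*}\}$, while you set $Q = S_{\Gamma}^{-1/2}S_{\Lambda}^{1/2}$ and then introduce $\tilde Q = Q^{*}$ to align with the convention of Theorem~\ref{22}; either way the computation $Q^{*}S_{\Gamma}Q = S_{\Lambda}$ and the dual $\theta_w S_{\Gamma}^{1/2}S_{\Lambda}^{-1/2}$ drop out identically.
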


\begin{proof}
	Let $Q=S_{\Lambda}^{\frac{1}{2}}S_{\Gamma}^{-\frac{1}{2}}$. Then by Theorem \ref{18}, $\{\Gamma_{w}Q^{\ast}\}_{w\in\Omega}$ is a $(C$-$C^{'})$-controlled continuous  $\ast$-$g$-frame for $U$ and it is similar to $\{\Gamma_{w}\}_{w\in\Omega}$, where  $S_{\Gamma Q^{\ast}}=QS_{\Gamma}Q^{\ast}$ is a $(C$-$C^{'})$-controlled continuous $\ast$-$g$-frame operator  of  $\{\Gamma_{w}Q^{\ast}\}_{w\in\Omega}$, and  so we have
	\begin{align*}
		S_{\Gamma Q^{\ast}}&=	S_{\Lambda}^{\frac{1}{2}}S_{\Gamma}^{-\frac{1}{2}}S_{\Gamma}(S_{\Lambda}^{\frac{1}{2}}S_{\Gamma}^{-\frac{1}{2}})^{\ast}\\&= S_{\Lambda}^{\frac{1}{2}}S_{\Gamma}^{-\frac{1}{2}}S_{\Gamma}S_{\Gamma}^{-\frac{1}{2}}S_{\Lambda}^{\frac{1}{2}}\\&=S_{\Lambda}.
	\end{align*}
	Let $\{\alpha_{w}\}_{w\in\Omega}$ be  a $(C$-$C^{'})$-controlled continuous dual of $\{\Gamma_{w}Q^{\ast}\}_{w\in\Omega}$. Then by Theorem \ref{22}, it is similar to a $(C$-$C^{'})$-controlled continuous dual of $\{\Gamma_{w}\}_{w\in\Omega}$ and  so $\alpha_{w}=\theta_{w}Q^{\ast}$ such that $\{\theta_{w}\}_{w\in\Omega}$ is a $(C$-$C^{'})$-controlled continuous dual of $\{\Gamma_{w}\}_{w\in\Omega}$.
\end{proof}

\begin{definition}
	Let $\{\Lambda_{w}\}_{w\in\omega}$ and $\{\Gamma_{w}\}_{w\in\Omega}$ be two $(C$-$C^{'})$-controlled continuous $\ast$-$g$-frames for $U$. If there exists an invertible adjointable operator $K$ on $U$ such that 
	\begin{equation*}
		x=\int_{\Omega}C^{'}\Lambda_{w}^{\ast}\Gamma_{w}KCxd\mu(w),\qquad\forall x\in U, 
	\end{equation*} 
	then we call $\{\Gamma_{w}\}_{w\in\Omega}$ an $(C$-$C^{'})$-controlled continuous operator dual of $\{\Lambda_{w}\}_{w\in\Omega}$ with corresponding invertible operator $K$.
\end{definition}

\begin{remark}
	$\{\Gamma_{w}\}_{w\in\Omega}$ is an $(C$-$C^{'})$-controlled continuous operator dual of $\{\Lambda_{w}\}_{w\in\Omega}$ with corresponding invertible operator $K$ if and only if $C^{'}T_{\Lambda}^{\ast}T_{\Gamma}KC=I_{U}$ where  $T_{\Lambda}$ and $T_{\Gamma}$ are   $(C$-$C^{'})$-controlled continuous $\ast$-$g$-frame transforms of $\{\Lambda_{w}\}_{w\in\Omega}$ and $\{\Gamma_{w}\}_{w\in\Omega}$, respectively.
\end{remark}

\begin{theorem}
	Let $\{\Lambda_{w}\}_{w\in\Omega}$ be a $(C$-$C^{'})$-controlled continuous $\ast$-$g$-frame for $U$ with $(C$-$C^{'})$-controlled continuous $\ast$-$g$-frame transform and $Q\in End_{\mathcal{A}}^{\ast}(U)$ be invertible. The set of operator duals of $(C$-$C^{'})$-controlled continuous $\ast$-$g$-frame for $\{\Lambda_{w}\}_{w\in\Omega}$ is one-to-one correspondence  to the set of $(C$-$C^{'})$-controlled operator duals of $(C$-$C^{'})$-controlled continuous $\ast$-$g$-frame for $\{\Lambda_{w}Q^{\ast}\}_{w\in\Omega}$ with corresponding invertible operator.
\end{theorem}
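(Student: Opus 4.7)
The plan is to build an explicit bijection at the level of pairs (dual frame, invertible operator), using the characterization recorded in the Remark just before the theorem. Recall that $\{\Gamma_{w}\}_{w\in\Omega}$ is an operator dual of $\{\Lambda_{w}\}_{w\in\Omega}$ with invertible operator $K$ iff
\begin{equation*}
C^{'}T_{\Lambda}^{\ast}T_{\Gamma}KC=I_{U},
\end{equation*}
while, by the relation $T_{\Lambda Q^{\ast}}x=\{\Lambda_{w}Q^{\ast}x\}_{w\in\Omega}=T_{\Lambda}Q^{\ast}x$ already used in Theorem \ref{22}, one has $T_{\Lambda Q^{\ast}}^{\ast}=QT_{\Lambda}^{\ast}$, so a pair $(\Theta_{w},K')$ is an operator dual of $\{\Lambda_{w}Q^{\ast}\}_{w\in\Omega}$ iff
\begin{equation*}
C^{'}QT_{\Lambda}^{\ast}T_{\Theta}K'C=I_{U}.
\end{equation*}

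My plan is to define the map $\Phi$ on pairs by $\Phi(\Gamma_{w},K)=(\Gamma_{w},KQ^{-1})$ and to show that it sends operator duals of $\{\Lambda_{w}\}_{w\in\Omega}$ to operator duals of $\{\Lambda_{w}Q^{\ast}\}_{w\in\Omega}$. Concretely, starting from $C^{'}T_{\Lambda}^{\ast}T_{\Gamma}KC=I_{U}$, conjugate both sides by $Q$ (multiply by $Q$ on the left and $Q^{-1}$ on the right) and commute $Q,Q^{-1}$ past $C,C^{'}$ to rewrite the identity in the form $C^{'}QT_{\Lambda}^{\ast}T_{\Gamma}(KQ^{-1})C=I_{U}$; this is exactly the operator duality condition with invertible operator $KQ^{-1}$. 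The candidate inverse is $\Psi(\Theta_{w},K')=(\Theta_{w},K'Q)$, and the verifications $\Phi\circ\Psi=\mathrm{id}$ and $\Psi\circ\Phi=\mathrm{id}$ are immediate from $(KQ^{-1})Q=K$ and $(K'Q)Q^{-1}=K'$. The first component is left unchanged, which makes the bijectivity at the level of frames transparent, so only the invertible operator is transported.

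The step that requires care is the commutation argument moving $Q$ past $C$ and $C^{'}$: this mirrors the tacit commutation used in the proof of Theorem \ref{22} (where the identity $C^{'}T_{\Lambda Q^{\ast}}^{\ast}=QC^{'}T_{\Lambda}^{\ast}$ is invoked). Under the standing assumption from Section \ref{section3} that $C,C^{'}$ commute with each other and with the relevant frame operators, and adopting the same convention that $Q$ commutes with $C,C^{'}$ used throughout this section, the conjugation step goes through cleanly. I will state this commutation hypothesis explicitly at the start of the proof so the computation is unambiguous; this is the only genuinely delicate point, since everything else is formal algebra on the two characterizing identities. Once the commutation is granted, the proof is a two-line direct verification for each direction plus the observation that $\Phi$ and $\Psi$ are mutual inverses.
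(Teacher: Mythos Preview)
Your argument is correct under the commutation hypothesis you flag (which, as you note, the paper also uses tacitly in Theorem~\ref{22}), but it follows a genuinely different route from the paper. The paper's correspondence sends an operator dual $(\{\Gamma_{w}\},K)$ of $\{\Lambda_{w}\}$ to the pair $(\{\Gamma_{w}Q^{\ast}\},(Q^{\ast})^{-1}KQ^{-1})$ for $\{\Lambda_{w}Q^{\ast}\}$: it modifies the dual \emph{frame} via a similarity $\Gamma_{w}\mapsto\Gamma_{w}Q^{\ast}$, in keeping with the theme of Theorem~\ref{22}, and then adjusts the invertible operator accordingly. Your map $\Phi(\{\Gamma_{w}\},K)=(\{\Gamma_{w}\},KQ^{-1})$ instead leaves the dual frame untouched and transports only the invertible operator. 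What your approach buys is a cleaner bijection whose inverse $\Psi(\{\Theta_{w}\},K')=(\{\Theta_{w}\},K'Q)$ is manifestly two-sided, and in fact it proves something slightly sharper: the \emph{sets of dual frames themselves} coincide, not merely correspond. By contrast, the paper's forward and backward maps both apply the transformation $\Gamma_{w}\mapsto\Gamma_{w}Q^{\ast}$, so their composite yields $\Gamma_{w}\mapsto\Gamma_{w}(Q^{\ast})^{2}$ rather than the identity; the one-to-one claim is thus less transparent there than in your version. The trade-off is that the paper's construction makes the link with the similarity notion of Theorem~\ref{22} explicit, while yours bypasses it entirely.
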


\begin{proof}
	Let $T_{\Lambda Q^{\ast}}$ be a $(C$-$C^{'})$-controlled continuous $\ast$-$g$-frame transform of $\{\Lambda_{w}Q^{\ast}\}_{w\in\Omega}$. 
	
	Suppose that $\{\Gamma_{w}\}_{w\in\Omega}$ is  an $(C$-$C^{'})$-controlled operator dual for $\{\Lambda_{w}\}_{w\in\Omega}$. Then 
	\begin{align*}
		I_{U}&=C^{'}T^{\ast}_{\Lambda}T_{\Gamma}KC\\&=C^{'}QT_{\Lambda}^{\ast}T_{\Gamma}Q^{\ast}(Q^{\ast})^{-1}CKQ^{-1}\\&=C^{'}T_{\Lambda Q^{\ast}}^{\ast}T_{\Gamma Q^{\ast}}(Q^{\ast})^{-1}CKQ^{-1}.
	\end{align*}
	So $\{\Lambda_{w}Q^{\ast}\}_{w\in\Omega}$ is an operator $(C$-$C^{'})$-controlled dual of $\{\Lambda_{w}Q^{\ast}\}_{w\in\Omega}$ with corresponding invertible operator $(Q^{\ast})^{-1}KQ^{-1}$.\\
	Conversely, assume that $\{\Gamma_{w}\}_{w\in\Omega}$ is an $(C$-$C^{'})$-controlled operator dual for $\{\Lambda_{w}Q^{\ast}\}_{w\in\Omega}$ with corresponding invertible operator $K$ and $(C$-$C^{'})$-controlled continuous $\ast$-$g$-frame transform $T_{\Gamma}$. Then 
	\begin{align*}
		I_{U}=C^{'}T_{\Lambda Q^{\ast}}^{\ast}T_{\Gamma}CK&\implies I_{U}=QC^{'}T_{\Lambda}^{\ast}T_{\Gamma}CK\\&\implies Q^{-1}=C^{'}T_{\Lambda}^{\ast}T_{\Gamma}CK\\&\implies C^{'}T_{\Lambda}^{\ast}T_{\Gamma}Q^{\ast}(Q^{\ast})^{-1}CK=Q^{-1}\\&\implies C^{'}T_{\Lambda}^{\ast}T_{\Gamma Q^{\ast}}(Q^{\ast})^{-1}KCQ=I_{U}.
	\end{align*}
	Hence $\{\Gamma_{w}Q^{\ast}\}_{w\in\Omega}$ is an $(C$-$C^{'})$-controlled operator dual of $\{\Lambda_{w}\}_{w\in\Omega}$ with corresponding invertible operator $(Q^{\ast})^{-1}KQ$.
\end{proof}

\begin{theorem}
	Let $\{\Lambda_{w}\}_{w\in\Omega}$ be a $(C$-$C^{'})$-controlled continuous $\ast$-$g$-frame for $U$, $H$ be an orthogonally complemented submodule of $U$ and $P_{H}$ be the orthogonal projection on $H$.  Then the following statements hold
	\begin{itemize}
		\item[(1)] The set $\{\Lambda_{w}P_{H}\}_{w\in\Omega}$ is a $(C$-$C^{'})$-controlled continuous $\ast$-$g$-frame for $H$.
		\item[(2)] If $\{\Gamma_{w}\}_{w\in\Omega}$ is an $(C$-$C^{'})$-controlled operator dual of $\{\Lambda_{w}\}_{w\in\Omega}$  with corresponding invertible operator $K$ and $K(H)\subset H$, then $\{\Gamma_{w}P_{H}\}_{w\in\Omega}$ is an $(C$-$C^{'})$-controlled operator dual for $\{\Lambda_{w}P_{H}\}_{w\in\Omega}$ with corresponding invertible operator $K|_{H}$. 
		\item[(3)] If $S$ and $S_{P_{H}}$ are $(C$-$C^{'})$-controlled continuous $\ast$-$g$-frame operators of $\{\Lambda_{w}\}_{w\in\Omega}$ and $\{\Lambda_{w}P_{H}\}_{w\in\Omega}$ respectively, and for all $w\in\Omega$,  $S^{-1}_{P_{H}}P_{H}\Lambda_{w}^{\ast}=P_{H}S^{-1}\Lambda_{w}^{\ast}$, then $S_{P_{H}}^{-1}P_{H}=P_{H}S^{-1}$ on $H$.
	\end{itemize}
\end{theorem}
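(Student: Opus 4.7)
The plan is to verify each of the three items directly from the relevant definitions, exploiting the standing assumption (stated just before the proposition preceding this theorem) that $C$ and $C'$ commute with each other and with $\Lambda_w^*\Lambda_w$ for every $w\in\Omega$, together with the fact that $P_H$ is a self-adjoint idempotent on $U$. As a preliminary step I would establish that $P_H$ commutes with $C$ and $C'$ on $H$ under these hypotheses (or, equivalently, that $C$ and $C'$ leave $H$ invariant); this compatibility will be used throughout.

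For (1), for $x\in H$ one has $P_H x = x$. The upper frame bound should propagate from the $U$-frame bound via $\|\Lambda_w P_H\|\leq \|\Lambda_w\|$, since $P_H$ is a norm-one projection, giving the same constant $B$. For the lower bound, I would apply the original $(C,C')$-controlled frame lower bound to $x\in H\subset U$ and then rewrite
\[
\int_\Omega \langle \Lambda_w C x, \Lambda_w C' x\rangle\, d\mu(w) \;=\; \int_\Omega \langle \Lambda_w P_H C x, \Lambda_w P_H C' x\rangle\, d\mu(w),
\]
using self-adjointness of $P_H$, its commutation with $C,C'$, and $P_H x = x$. This yields the frame constants $A,B$ of the restricted family.

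For (2), I would compute for $x\in H$,
\[
\int_\Omega C'(\Lambda_w P_H)^*(\Gamma_w P_H)(K|_H) C x\, d\mu(w) \;=\; \int_\Omega C' P_H \Lambda_w^* \Gamma_w P_H K C x\, d\mu(w).
\]
Using $K(H)\subset H$ to rewrite $P_H K P_H = K P_H$ on $H$, together with the commutativity of $C,C'$ with $P_H$, the integrand collapses so that the right-hand side equals $P_H \int_\Omega C' \Lambda_w^* \Gamma_w K C x\, d\mu(w) = P_H x = x$, where the penultimate equality invokes the defining operator-dual identity of $\{\Gamma_w\}$ for $\{\Lambda_w\}$ with invertible operator $K$.

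For (3), the hypothesis $S_{P_H}^{-1}P_H\Lambda_w^* = P_H S^{-1}\Lambda_w^*$ states that the bounded adjointable operators $S_{P_H}^{-1}P_H$ and $P_H S^{-1}$ agree on the image of $\Lambda_w^*$ for every $w\in\Omega$. Integrating against an arbitrary $\{y_w\}\in\oplus_{w\in\Omega}V_w$ yields agreement on $\int_\Omega \Lambda_w^* y_w\, d\mu(w)$, and by the surjectivity of the synthesis operator associated to the original frame (as obtained in the earlier analysis/synthesis results of Section 3), this forces equality on all of $U$; restricting to $H$ gives $S_{P_H}^{-1}P_H = P_H S^{-1}$ on $H$.

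The main obstacle will be item (1): for $x\in H$ the vectors $Cx$ and $C'x$ need not a priori lie in $H$, so the identification $\Lambda_w P_H C x = \Lambda_w C x$ is not automatic. Establishing the commutation $P_H C = C P_H$ (and likewise for $C'$), or equivalently the invariance of $H$ under $C$ and $C'$, from the standing hypotheses is therefore the crux of the argument and should be handled carefully before the inequality estimates are written out.
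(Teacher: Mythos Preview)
Your approach to (1) and (2) is essentially the paper's. The paper's proof of (1) is the one-line observation $\Lambda_w P_H x=\Lambda_w x$ for $x\in H$; it does not address the point you single out as the crux, namely that $Cx$ need not lie in $H$, and in (2) it likewise moves $P_H$ past $C'$ and inserts $P_H$ before $KCx$ without comment. So the commutation/invariance issue you flag is real, and the paper simply assumes it implicitly; you are being more careful than the paper, not taking a different route.

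For (3) you diverge from the paper. The paper starts from the canonical reconstruction $x=\int_\Omega C'\Lambda_w^{*}\Lambda_w S^{-1}Cx\,d\mu(w)$, applies $S_{P_H}^{-1}P_H$, pushes it inside the integral, invokes the hypothesis $S_{P_H}^{-1}P_H\Lambda_w^{*}=P_H S^{-1}\Lambda_w^{*}$ termwise, and recognises the result as $P_H S^{-1}x$; this uses only invertibility of $S$. Your argument instead propagates the termwise agreement through the uncontrolled synthesis $\{y_w\}\mapsto\int_\Omega\Lambda_w^{*}y_w\,d\mu(w)$ and appeals to its surjectivity. That surjectivity is not one of the stated results of Section~3: Theorem~\ref{2.3} gives surjectivity of a different adjoint (involving $(C'\Lambda_w^{*}\Lambda_w C)$), and Theorem~\ref{th} takes surjectivity of $\theta$ as a hypothesis, not a conclusion. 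You can recover it from the invertibility of $S_{CC'}=T_{CC'}T_{CC'}^{*}$ together with the invertibility of $(CC')^{1/2}$, but that is additional work the paper's direct computation avoids. If you keep your route, make that deduction explicit; otherwise the reconstruction-formula argument is shorter.
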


\begin{proof}
	$(1)$ We have for each $w\in\Omega$, $\Lambda_{w}P_{H}x=\Lambda_{w}x$, for all $x\in H$. Hence $\{\Lambda_{w}P_{H}\}_{w\in\Omega}$ is a $(C$-$C^{'})$-controlled continuous $\ast$-$g$-frame for $H$.
	
	$(2)$ Let $\{\Gamma_{w}\}_{w\in\Omega}$ be  an $(C$-$C^{'})$-controlled operator dual of $\{\Lambda_{w}\}_{w\in\Omega}$ such that $K(H)\subset H$ such that  $P_{H}Kx=Kx$ for each $x\in H$. Then 
	\begin{align*}		x&=P_{H}x\\&=P_{H}\left(\int_{\Omega}C^{'}\Lambda_{w}^{\ast}\Gamma_{w}KCxd\mu(w)\right)\\&=\int_{\Omega}C^{'}P_{H}\Lambda_{w}^{\ast}\Gamma_{w}KCxd\mu(w)\\&=\int_{\Omega}C^{'}(\Lambda_{w}P_{H})^{\ast}(\Gamma_{w}P_{H})KCxd\mu(w).
	\end{align*}
	Then $\{\Gamma_{w}P_{H}\}_{w\in\Omega}$ is an  $(C$-$C^{'})$-controlled operator dual for $\{\Lambda_{w}P_{H}\}_{w\in\Omega}$ with corresponding invertible operator $K_{/H}$.
	
	$(3)$ Suppose that $P_{H}S^{-1}\Lambda_{w}^{\ast}=S_{P_{H}}^{-1}P_{H}\Lambda_{w}^{\ast}$ for each $w\in\Omega$. Then  we have for each $x\in H$
	\begin{align*}
		S_{P_{H}}^{-1}P_{H}x&=S_{P_{H}}^{-1}P_{H}\left(\int_{\Omega}C^{'}\Lambda_{w}^{\ast}\Lambda_{w}S^{-1}Cxd\mu(w)\right)\\&=\int_{\Omega}S_{P_{H}}^{-1}P_{H}\Lambda_{w}^{\ast}\Lambda_{w}S^{-1}xd\mu(w)\\&=\int_{\Omega}C^{'}P_{H}S^{-1}\Lambda_{w}^{\ast}\Lambda_{w}S^{-1}Cxd\mu(w)\\&=P_{H}\int_{\Omega}C^{'}(\Lambda_{w}S^{-1})^{\ast}C(\Lambda_{w}S^{-1})xd\mu(w)\\&=P_{H}S^{-1}x.
	\end{align*}
	This completes the proof.
\end{proof}

\begin{proposition}\label{02}
	Let $\{\Gamma_{w}\}_{w\in\Omega}$ be a $(C$-$C^{'})$-controlled operator dual for $\{\Lambda_{w}\}_{w\in\Omega}$ with corresponding invertible operator $K$. Then $\{\Lambda_{w}\}_{w\in\Omega}$ is a $(C$-$C^{'})$-controlled operator dual for $\{\Gamma_{w}\}_{w\in\Omega}$ with corresponding invertible operator $K^{\ast}$.
\end{proposition}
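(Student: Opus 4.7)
The plan is to reduce both the hypothesis and the desired conclusion to operator identities via the characterization in the preceding remark, then derive one from the other by taking a single adjoint. By that remark, the hypothesis is equivalent to $C^{'} T_{\Lambda}^{\ast} T_{\Gamma} K C = I_{U}$, and we must establish $C^{'} T_{\Gamma}^{\ast} T_{\Lambda} K^{\ast} C = I_{U}$.

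First, I would pair the defining identity with an arbitrary test vector. Starting from $x=\int_{\Omega} C^{'}\Lambda_{w}^{\ast}\Gamma_{w} K C x\,d\mu(w)$, I would apply $\langle\,\cdot\,,y\rangle$ and push each operator across the inner product. Using that $C$ and $C^{'}$ are self-adjoint (they lie in $GL^{+}(U)$) and that $\Lambda_{w},\Gamma_{w},K$ are adjointable, one obtains
\begin{equation*}
\langle x,y\rangle=\left\langle x,\int_{\Omega} C K^{\ast}\Gamma_{w}^{\ast}\Lambda_{w} C^{'} y\,d\mu(w)\right\rangle
\end{equation*}
for every $x,y\in U$. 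Non-degeneracy of the inner product then yields the operator identity $C K^{\ast} T_{\Gamma}^{\ast} T_{\Lambda} C^{'}=I_{U}$, which is precisely what taking the adjoint of $C^{'} T_{\Lambda}^{\ast} T_{\Gamma} K C = I_{U}$ produces directly.

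Second, I would invoke the standing commutation assumption at the start of Section 3 ($C$ and $C^{'}$ commute with each other and with the relevant frame-type operators) to rearrange the integrand into $C^{'} \Gamma_{w}^{\ast}\Lambda_{w} K^{\ast} C$. This gives the identity $C^{'} T_{\Gamma}^{\ast} T_{\Lambda} K^{\ast} C = I_{U}$. Since $K$ is adjointable and invertible, so is $K^{\ast}$, completing the verification that $\{\Lambda_{w}\}_{w\in\Omega}$ is a $(C$-$C^{'})$-controlled operator dual of $\{\Gamma_{w}\}_{w\in\Omega}$ with corresponding invertible operator $K^{\ast}$.

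The main obstacle is the rearrangement step. The stated commutation hypothesis literally covers only $C$ and $C^{'}$ with each other and with $\Lambda_{w}^{\ast}\Lambda_{w}$, so justifying the swap between $C$ and $C^{'}$ around $K^{\ast}\Gamma_{w}^{\ast}\Lambda_{w}$ requires reading the assumption as implicitly extending to $\Gamma_{w}^{\ast}\Lambda_{w}$ and to $K$. Aside from this rearrangement, the argument is a single formal adjoint computation using only self-adjointness of $C,C^{'}$ together with $(AB)^{\ast}=B^{\ast}A^{\ast}$ and $I_{U}^{\ast}=I_{U}$.
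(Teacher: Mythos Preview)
Your approach is correct and essentially matches the paper's: reduce the hypothesis to the operator identity $C^{'}T_{\Lambda}^{\ast}T_{\Gamma}KC=I_{U}$, pass to the adjoint, and rearrange with the standing commutation assumptions. There is one small difference worth noting. The paper first exploits invertibility of $K$ to rewrite the identity as $K^{-1}=C^{'}T_{\Lambda}^{\ast}T_{\Gamma}C$, hence $I_{U}=K\,C^{'}T_{\Lambda}^{\ast}T_{\Gamma}C$, and only then takes the adjoint, obtaining $I_{U}=C\,T_{\Gamma}^{\ast}T_{\Lambda}C^{'}K^{\ast}$ with $K^{\ast}$ already sitting on the right. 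In your version you take the adjoint first, landing at $CK^{\ast}T_{\Gamma}^{\ast}T_{\Lambda}C^{'}=I_{U}$, and then must commute $K^{\ast}$ through $T_{\Gamma}^{\ast}T_{\Lambda}$ in the final rearrangement---precisely the point you flagged as the main obstacle. The paper's order of operations sidesteps that particular commutation, leaving only the $C\leftrightarrow C^{'}$ swap (which it, too, treats as tacitly permitted). So your argument is sound, but reordering the two steps as the paper does would eliminate the need for the extra commutation hypothesis on $K$ that you were worried about.
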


\begin{proof}
	Since $\{\Lambda_{w}\}_{w\in\Omega}$ and $\{\Gamma_{w}\}_{w\in\Omega}$ are $(C$-$C^{'})$-controlled continuous $\ast$-$g$-frames,  we have the $(C$-$C^{'})$-controlled continuous $\ast$-$g$-frame transforms  $T_{\Lambda}$ and $T_{\Gamma}$ for $\{\Lambda_{w}\}_{w\in\Omega}$ and $\{\Gamma_{w}\}_{w\in\Omega}$, respectively.
	
	By definition of $(C$-$C^{'})$-controlled operator dual, 
	\begin{equation*}
		x=\int_{\Omega}C^{'}\Lambda_{w}^{\ast}\Gamma_{w}KCxd\mu(w),\qquad \forall x\in U.
	\end{equation*}
	Thus  $C^{'}T_{\Lambda}^{\ast}T_{\Gamma}CK=I_{U}$. Since  $K$ is invertible,  we have  $K^{-1}=C^{'}T_{\Lambda}^{\ast}T_{\Gamma}C$ and  so 
	\begin{equation*}
		I_{U}=KC^{'}(T_{\Lambda}^{\ast}T_{\Gamma})C \implies I_{U}=C^{'}T^{\ast}_{\Gamma}T_{\Lambda}CK^{\ast}.
	\end{equation*}
	Therefore, 
	\begin{equation*}
		x=\int_{\Omega}C^{'}\Gamma_{w}^{\ast}\Lambda_{w}K^{\ast}Cxd\mu(w),\qquad\forall x\in U.
	\end{equation*}
	We conclude that $\{\Lambda_{w}\}_{w\in\Omega}$ is an $(C$-$C^{'})$-controlled operator dual for $\{\Gamma_{w}\}_{w\in\Omega}$ with corresponding invertible operator $K^{\ast}$.
\end{proof}
\begin{theorem}\label{33}
	Let $\{\Lambda_{w}\}_{w\in \Omega}$ and $\{\Gamma_{w}\}_{w\in \Omega}$ be a $(C$-$C^{'})$-controlled  continuous $\ast$-$g$-Bessel sequences for $U$ with $(C$-$C^{'})$-controlled continuous $\ast$-$g$-frame transforms $T_{\Lambda}$ and $T_{\Gamma}$, respectively. If there exists an adjointable and invertible operator $K$ on $U$ such that
	\begin{equation*}
		x=\int_{\Omega}C^{'}\Lambda_{w}^{\ast}\Gamma_{w}KCx d\mu(w),\qquad\forall x\in U, 
	\end{equation*}
	then $\{\Gamma_{w}\}_{w\in \Omega}$ is the $(C$-$C^{'})$-controlled continuous operator duals of $\{\Lambda_{w}\}_{w\in\Omega}$ with corresponding invertible operator $K$ and $\{\Lambda_{w}\}_{w\in \Omega}$ is the $(C$-$C^{'})$-controlled continuous operator duals of $\{\Gamma_{w}\}_{w\in\Omega}$ with corresponding invertible operator $K^{\ast}$.
\end{theorem}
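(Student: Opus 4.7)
The plan is to proceed in two stages. Since the hypothesis assumes only that $\{\Lambda_w\}$ and $\{\Gamma_w\}$ are $(C,C')$-controlled continuous $\ast$-$g$-Bessel sequences, while the definition of an operator dual requires both families to be frames, the core technical step is to extract lower frame bounds from the reproduction identity $x=\int_\Omega C'\Lambda_w^*\Gamma_w KCx\,d\mu(w)$; once this is achieved, the identity itself supplies the first operator-dual relation, and Proposition \ref{02} supplies the symmetric one.

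To obtain the lower bound on $\{\Lambda_w\}$, I would pair the reproduction identity against $x$ and shift adjoints to get
\begin{equation*}
\langle x,x\rangle=\int_\Omega\langle \Gamma_w KCx,\Lambda_w C'x\rangle\,d\mu(w).
\end{equation*}
The Cauchy--Schwarz inequality for integrals in Hilbert $C^*$-modules then yields
\begin{equation*}
\|x\|^4\le\Big\|\int_\Omega\langle\Gamma_w KCx,\Gamma_w KCx\rangle\,d\mu(w)\Big\|\cdot\Big\|\int_\Omega\langle\Lambda_w C'x,\Lambda_w C'x\rangle\,d\mu(w)\Big\|.
\end{equation*}
Under the standing commutation hypothesis the identity $\int\langle\Gamma_w Cy,\Gamma_w C'y\rangle\,d\mu=\int\langle\Gamma_w(CC')^{1/2}y,\Gamma_w(CC')^{1/2}y\rangle\,d\mu$ holds, so the substitution $y=(CC')^{-1/2}KCx$ together with the Bessel bound on $\{\Gamma_w\}$ shows that the first factor is bounded by a constant multiple of $\|x\|^2$. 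Dividing out produces $\|x\|^2\le M_\Gamma\big\|\int_\Omega\langle\Lambda_w C'x,\Lambda_w C'x\rangle\,d\mu(w)\big\|$. Because $C$ and $C'$ are commuting positive invertibles, the operator inequality $(C')^2\le\|C^{-1}C'\|\,CC'$ holds, and multiplying by $\Lambda_w^*\Lambda_w$ (which commutes with $C,C'$) preserves it, yielding $\int\langle\Lambda_w C'x,\Lambda_w C'x\rangle\,d\mu\le\|C^{-1}C'\|\int\langle\Lambda_w Cx,\Lambda_w C'x\rangle\,d\mu$. Combining gives a lower norm bound of the form required by Theorem \ref{t2}, which then promotes $\{\Lambda_w\}$ to a bona fide $(C,C')$-controlled continuous $\ast$-$g$-frame. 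Swapping the roles of $\Lambda$ and $\Gamma$ (and using the invertibility of $KC$ to absorb it by an analogous substitution) gives the lower bound for $\{\Gamma_w\}$.

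With both families upgraded to $(C,C')$-controlled continuous $\ast$-$g$-frames, the frame transforms $T_\Lambda,T_\Gamma$ are defined and the reproduction identity reads $C'T_\Lambda^*T_\Gamma KC=I_U$. By the remark preceding Proposition \ref{02}, this is exactly the condition that $\{\Gamma_w\}$ is a $(C,C')$-controlled continuous operator dual of $\{\Lambda_w\}$ with corresponding invertible operator $K$. Applying Proposition \ref{02} to this pair then directly yields the companion statement, namely that $\{\Lambda_w\}$ is a $(C,C')$-controlled continuous operator dual of $\{\Gamma_w\}$ with corresponding invertible operator $K^*$. The main obstacle is the Cauchy--Schwarz estimate combined with the commutation bookkeeping needed to translate the Bessel-type integral $\int\langle\Lambda_w C'x,\Lambda_w C'x\rangle\,d\mu$ into the true controlled quantity $\int\langle\Lambda_w Cx,\Lambda_w C'x\rangle\,d\mu$; this is precisely where the standing commutation of $C,C'$ with $\Lambda_w^*\Lambda_w$ (and symmetrically $\Gamma_w^*\Gamma_w$) is essential.
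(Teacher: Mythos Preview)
Your overall strategy matches the paper's exactly: use the reproduction identity to manufacture a lower frame bound for each family, then invoke Proposition~\ref{02} to get the symmetric operator-dual relation. The difference is purely in the mechanism for extracting the lower bound. You pair the identity against $x$, apply Cauchy--Schwarz to split the cross term into two ``diagonal'' integrals, dispose of one factor via the Bessel bound, and then convert the surviving norm inequality into the controlled form via the commutation estimate $(C')^2\le\|C^{-1}C'\|\,CC'$ and Theorem~\ref{t2}. The paper instead rewrites the identity as $x=T_\Lambda^\ast T_\Gamma Kx$ and applies Lemma~\ref{l1} (Paschke's inequality $\langle T^\ast y,T^\ast y\rangle\le\|T\|^2\langle y,y\rangle$) directly to $T_\Lambda^\ast$, obtaining an $\mathcal{A}$-valued inequality $\langle x,x\rangle\le\|T_\Lambda\|^2\int_\Omega\langle\Gamma_w KCx,\Gamma_w KC'x\rangle\,d\mu$ in one step, with no Cauchy--Schwarz and no conversion between $\int\langle\Lambda_wC'x,\Lambda_wC'x\rangle$ and $\int\langle\Lambda_wCx,\Lambda_wC'x\rangle$; the $K$ is then removed by the substitution $x\mapsto K^{-1}x$ and Lemma~\ref{3}. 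The paper's route is shorter because the controlled transforms already have $(CC')^{1/2}$ baked in, so the mixed $C,C'$ integral appears automatically; your route is more hands-on but makes the use of the standing commutation hypothesis for $C,C',\Lambda_w^\ast\Lambda_w$ completely explicit, which the paper's argument leaves somewhat implicit (notably in the step where $K$ is commuted past $C,C'$). Both are correct.
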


\begin{proof}
	We have for each $x\in U$
	\begin{align*}
		\langle x,x\rangle&=\langle T^{\ast}_{\Lambda}T_{\Gamma}Kx,  T^{\ast}_{\Lambda}T_{\Gamma}Kx\rangle\\&\leq ||T_{\Lambda}||^{2}\langle T_{\Gamma}Kx, T_{\Gamma}Kx\rangle\\&\leq ||T_{\Lambda}||^{2}\int_{\Omega}\langle \Gamma_{w}KCx,\Gamma_{w}KC^{'}x\rangle d\mu(w).
	\end{align*}
	Then
	\begin{equation*}
		||T_{\Lambda}||^{-2}\langle K^{-1}x,K^{-1}x\rangle \leq\int_{\Omega}\langle \Gamma_{w}KK^{-1}Cx, \Gamma_{w}KK^{-1}C^{'}x\rangle d\mu(w).
	\end{equation*}
	By Lemma \ref{3},  for each $x\in U$, we have
	\begin{equation*}
		||(K^{-1}(K^{-1})^{\ast})^{-1}||^{-1}\langle x,x\rangle \leq \langle K^{-1}Cx,K^{-1}C^{'}x\rangle.
	\end{equation*}
	Hence
	\begin{equation*}
		||T_{\Lambda}||^{-2}||(K^{-1}(K^{-1})^{\ast})^{-1}||^{-1}\langle x,x\rangle\leq\int_{\Omega}\langle \Gamma_{w}Cx,\Gamma_{w}C^{'}x\rangle d\mu(w).
	\end{equation*}
	We put $A=||(K^{-1}(K^{-1})^{\ast})^{-1}||^{-\frac{1}{2}}$. Then for each $x\in U$,
	\begin{equation*}
		||T_{\Lambda}||^{-1}A1_{\mathcal{A}}\langle x,x\rangle (	||T_{\Lambda}||^{-1}A1_{\mathcal{A}})^{\ast}\leq\int_{\Omega}\langle \Gamma_{w}Cx,\Gamma_{w}C^{'}x\rangle d\mu(w).
	\end{equation*}
	Therefore, $\{\Gamma_{w}\}_{w\in\Omega}$ is a $(C$-$C^{'})$-controlled continuous $\ast$-$g$-frame sequence for $U$.\\
	Similarly, $\{\Lambda_{w}\}_{w\in\Omega}$ is a $(C$-$C^{'})$-controlled continuous $\ast$-$g$-frame sequence for $U$. So by Proposition \ref{02}, $\{\Lambda_{w}\}_{w\in\Omega}$ and $\{\Gamma_{w}\}_{w\in\Omega}$ are the $(C$-$C^{'})$-controlled continuous operator duals to each other.
\end{proof}
\begin{theorem}\label{55}
	Let $\{\Lambda_{w}\}_{w\in\Omega}$ be a $(C$-$C)$-controlled continuous $\ast$-$g$-frame for $U$ with $(C$-$C)$-controlled continuous $\ast$-$g$-frame transform $T_{\Lambda}$ and $(C$-$C)$-controlled continuous $\ast$-$g$-frame operator $S$. If $K$ is an invertible adjointable operator on $U$, then  the set $\mathcal{C}$ of all right inverses of $KT_{\Lambda}^{\ast}$ is
	\begin{equation*}
		\biggl\{T_{\Lambda}S^{-1}K^{-1}+(I_{U}-T_{\Lambda}S^{-1}T_{\Lambda}^{\ast})\theta;\; \theta\in End_{\mathcal{A}}^{\ast}(U,\oplus_{w\in\Omega}V_{w})\biggr\}.
	\end{equation*}
\end{theorem}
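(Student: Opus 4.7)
The approach is to establish the equality of the two sets by double inclusion, using only three ingredients from earlier in the paper: the identity $T_{\Lambda}^{\ast}T_{\Lambda}=S$ (which follows from Theorem~\ref{2.3} and the definition of the frame operator $S$ in the $(C,C)$-controlled setting), the invertibility of $S$ established in Section~\ref{section3}, and the invertibility of $K$.

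For the inclusion ``$\supseteq$'', I first check that $R_{0}:=T_{\Lambda}S^{-1}K^{-1}$ is itself a right inverse of $KT_{\Lambda}^{\ast}$, since
\begin{equation*}
KT_{\Lambda}^{\ast}R_{0}=KT_{\Lambda}^{\ast}T_{\Lambda}S^{-1}K^{-1}=KSS^{-1}K^{-1}=I_{U}.
\end{equation*}
Then for any $\theta\in End_{\mathcal{A}}^{\ast}(U,\oplus_{w\in\Omega}V_{w})$, a direct computation using $T_{\Lambda}^{\ast}T_{\Lambda}=S$ gives
\begin{equation*}
KT_{\Lambda}^{\ast}\bigl(I-T_{\Lambda}S^{-1}T_{\Lambda}^{\ast}\bigr)\theta
=K\bigl(T_{\Lambda}^{\ast}-SS^{-1}T_{\Lambda}^{\ast}\bigr)\theta=0,
\end{equation*}
so $R_{0}+(I-T_{\Lambda}S^{-1}T_{\Lambda}^{\ast})\theta$ is again a right inverse of $KT_{\Lambda}^{\ast}$.

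For the reverse inclusion ``$\subseteq$'', let $R$ be an arbitrary right inverse of $KT_{\Lambda}^{\ast}$ and set $\theta:=R-R_{0}$. From $KT_{\Lambda}^{\ast}R=I_{U}$ and invertibility of $K$ I obtain $T_{\Lambda}^{\ast}R=K^{-1}$, and therefore
\begin{equation*}
T_{\Lambda}^{\ast}\theta
=T_{\Lambda}^{\ast}R-T_{\Lambda}^{\ast}T_{\Lambda}S^{-1}K^{-1}
=K^{-1}-SS^{-1}K^{-1}=0.
\end{equation*}
Consequently $(I-T_{\Lambda}S^{-1}T_{\Lambda}^{\ast})\theta=\theta-T_{\Lambda}S^{-1}(T_{\Lambda}^{\ast}\theta)=\theta$, so
\begin{equation*}
R=R_{0}+\theta=T_{\Lambda}S^{-1}K^{-1}+(I-T_{\Lambda}S^{-1}T_{\Lambda}^{\ast})\theta,
\end{equation*}
and $R$ belongs to $\mathcal{C}$.

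I do not anticipate any serious obstacle: conceptually the argument is just the standard principle ``general right inverse $=$ one particular right inverse plus an operator annihilated by $KT_{\Lambda}^{\ast}$'', made concrete by the fact that $T_{\Lambda}S^{-1}T_{\Lambda}^{\ast}$ is precisely the orthogonal projection onto the (closed) range of $T_{\Lambda}$, so $I-T_{\Lambda}S^{-1}T_{\Lambda}^{\ast}$ projects onto $\ker T_{\Lambda}^{\ast}$ and acts as the identity on the space where $\theta$ must land. The only mild subtlety is notational: the $I$ multiplying $\theta$ should be read as the identity on $\oplus_{w\in\Omega}V_{w}$ (rather than $I_{U}$, as written in the statement), so that all compositions are well-defined; with that reading the computation above goes through without change.
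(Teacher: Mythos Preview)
Your proof is correct and follows essentially the same double-inclusion strategy as the paper, using $T_{\Lambda}^{\ast}T_{\Lambda}=S$ as the key identity. The only cosmetic difference is in the ``$\subseteq$'' direction: the paper takes $\theta=G$ (the given right inverse itself) and rewrites $G-T_{\Lambda}S^{-1}K^{-1}=G-T_{\Lambda}S^{-1}K^{-1}KT_{\Lambda}^{\ast}G=(I-T_{\Lambda}S^{-1}T_{\Lambda}^{\ast})G$ directly, whereas you take $\theta=R-R_{0}$ and first verify $T_{\Lambda}^{\ast}\theta=0$; both choices work and the underlying computation is the same. Your remark about the identity operator needing to be read as the identity on $\oplus_{w\in\Omega}V_{w}$ is also well taken.
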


\begin{proof}
	Let $G\in End_{\mathcal{A}}^{\ast}(U,\oplus_{w\in\Omega}V_{w})$ be a right inverse of $KT_{\Lambda}^{\ast}$.
	Then we have 
	\begin{align*}
		G&=T_{\Lambda}S^{-1}K^{-1}+G-T_{\Lambda}S^{-1}K^{-1}\\&=T_{\Lambda}S^{-1}K^{-1}+G-T_{\Lambda}S^{-1}K^{-1}KT_{\Lambda}^{\ast}G\\&=T_{\Lambda}S^{-1}K^{-1}+(I_{U}-T_{\Lambda}S^{-1}T_{\Lambda}^{\ast})G.
	\end{align*}
	Then it is enough to set $\theta=G$.\\
	Conversely, let  $\theta\in End_{\mathcal{A}}^{\ast}(U,\oplus_{w\in\Omega}V_{w})$.
	Then we have 
	\begin{align*}
		KT_{\Lambda}^{\ast}(T_{\Lambda}S^{-1}K^{-1}+(I_{U}-T_{\Lambda}S^{-1}T^{\ast}_{\Lambda})\theta)&=KT^{\ast}_{\Lambda}T_{\Lambda}S^{-1}K^{-1}+KT^{\ast}_{\Lambda}\theta-KT^{\ast}_{\Lambda}T_{\Lambda}S^{-1}T^{\ast}_{\Lambda}\theta\\&=I_{U}+KT_{\Lambda}^{\ast}\theta-KT^{\ast}_{\Lambda}\theta\\&=I_{U}.
	\end{align*}
	Therefore, $T_{\Lambda}S^{-1}K^{-1}+(I_{U}-T_{\Lambda}S^{-1}T^{\ast}_{\Lambda})\theta$ is a right inverse of $KT^{\ast}_{\Lambda}$.
\end{proof}

\begin{theorem}
	Let $\{\Gamma_{w}\}_{w\in \Omega}$ be a $(C$-$C^{'})$-controlled operator dual of the $(C$-$C^{'})$-controlled continuous $\ast$-$g$-frame of $\{\Lambda_{w}\}_{w\in \Omega}$ with corresponding invertible operator $K$, and let $\{\Lambda_{w}S^{-1}\}_{w\in \Omega}$ be the $(C$-$C^{'})$-controlled canonical dual of $\{\Lambda_{w}\}_{w\in\Omega}$. If $v$ is a strictly nonzero element in the center of $\mathcal{A}$ and $\theta_{w}=v\Gamma_{w}+v\Lambda_{w}S^{-1}K^{-1}$ for $w\in \Omega$, then $\{\theta_{w}\}_{w\in\Omega}$ is a $(C$-$C^{'})$-controlled operator dual of $\{\Lambda_{w}\}_{w\in\Omega}$ with corresponding invertible operator $\frac{1}{2}v^{-1}K$.
\end{theorem}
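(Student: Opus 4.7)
My plan is to verify the reconstruction identity
\begin{equation*}
x = \int_\Omega C' \Lambda_w^* \theta_w \Big(\tfrac{1}{2} v^{-1} K\Big) C x \, d\mu(w), \qquad \forall x \in U,
\end{equation*}
by a direct computation, and then use Theorem \ref{33} to upgrade $\{\theta_w\}_{w\in\Omega}$ from a Bessel sequence to a genuine $(C,C')$-controlled continuous $\ast$-$g$-frame, at which point the stated operator-dual property falls out immediately.

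First, I would substitute $\theta_w = v\Gamma_w + v\Lambda_w S^{-1} K^{-1}$ into the integrand on the right-hand side. Since $v$ lies in the center of $\mathcal{A}$ and each of $\Lambda_w^*$, $\Gamma_w$, $S^{-1}$, $K$ and $C$ is $\mathcal{A}$-linear, the element $v$ commutes with all of them, so the factors $v$ and $v^{-1}$ cancel and the integrand splits as
\begin{equation*}
\tfrac{1}{2}\, C' \Lambda_w^* \Gamma_w K C x \;+\; \tfrac{1}{2}\, C' \Lambda_w^* \Lambda_w S^{-1} C x.
\end{equation*}
Integrating and invoking the hypotheses concludes the identity: the assumption that $\{\Gamma_w\}$ is a $(C,C')$-controlled operator dual of $\{\Lambda_w\}$ with corresponding invertible operator $K$ contributes $\tfrac{1}{2} x$, and the definition of the canonical $(C,C')$-controlled dual $\{\Lambda_w S^{-1}\}$ contributes another $\tfrac{1}{2} x$, summing to $x$.

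Next, I would observe that $\tfrac{1}{2} v^{-1} K$ is adjointable and invertible, since $v$ is a strictly nonzero central element (hence invertible in $\mathcal{A}$) and $K$ is invertible by hypothesis. The sequence $\{\theta_w\}$ is a sum of two $(C,C')$-controlled continuous $\ast$-$g$-Bessel sequences, each obtained from $\{\Gamma_w\}$ or $\{\Lambda_w\}$ by post-composition with a bounded adjointable module map, and is therefore itself Bessel. Combined with the reconstruction identity just established, Theorem \ref{33} applied to the Bessel pair $\{\Lambda_w\}$, $\{\theta_w\}$ with invertible operator $\tfrac{1}{2} v^{-1} K$ simultaneously upgrades $\{\theta_w\}$ to a $(C,C')$-controlled continuous $\ast$-$g$-frame and confirms that it is a $(C,C')$-controlled operator dual of $\{\Lambda_w\}$ with the stated corresponding operator.

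The only real obstacle is the bookkeeping around the central element $v$: centrality of $v$ in $\mathcal{A}$ together with $\mathcal{A}$-linearity of every map in sight is what gives $v T = T v$ for each adjointable $\mathcal{A}$-linear $T$, and this alone justifies pulling $v$ past $\Gamma_w$, $\Lambda_w^*$, $C$, $S^{-1}$, $K$, and through the integral sign. Once that is carried out cleanly, the rest of the argument is routine application of the two reconstruction formulas.
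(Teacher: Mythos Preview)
Your approach is correct and essentially identical to the paper's own proof: the paper also computes $\int_{\Omega}C'\Lambda_{w}^{\ast}\theta_{w}\big(\tfrac{1}{2}v^{-1}K\big)Cx\,d\mu(w)$ directly, splits it into the two halves coming from the operator-dual identity for $\{\Gamma_{w}\}$ and the canonical-dual identity for $\{\Lambda_{w}S^{-1}\}$, obtains $\tfrac{1}{2}x+\tfrac{1}{2}x=x$, and then invokes Theorem~\ref{33}. Your write-up is in fact slightly more careful than the paper's, since you explicitly note that $\{\theta_{w}\}$ is Bessel and that $\tfrac{1}{2}v^{-1}K$ is invertible before applying Theorem~\ref{33}, and you spell out why centrality of $v$ is what allows it to be pushed through the $\mathcal{A}$-linear maps.
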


\begin{proof}
	Suppose that $\{\Gamma_{w}\}_{w\in \Omega}$ is  a $(C$-$C^{'})$-controlled operator dual of the $(C$-$C^{'})$-controlled continuous $\ast$-$g$-frame of $\{\Lambda_{w}\}_{w\in \Omega}$.
	Then we have for each $x\in U$,
	\begin{align*}
		\int_{\Omega}C^{'}\Lambda_{w}^{\ast}\theta_{w}\left(\frac{1}{2}v^{-1}K\right)Cxd\mu(w)&=\int_{\Omega}C^{'}\Lambda_{w}^{\ast}\bigg(v\Gamma_{w}+v\Lambda_{w}S^{-1}K^{-1}\bigg)\frac{1}{2}v^{-1}KCxd\mu(w)\\&=\int_{\Omega}C^{'}\Lambda_{w}^{\ast}\left(\frac{1}{2}\Gamma_{w}K+\frac{1}{2}\Lambda_{w}S^{-1}\right)Cxd\mu(w)\\&=\frac{1}{2}\int_{\Omega}C^{'}\Lambda_{w}^{\ast}\Gamma_{w}KCxd\mu(w)+\frac{1}{2}\int_{\Omega}C^{'}\Lambda_{w}^{\ast}\Lambda_{w}S^{-1}Cxd\mu(w)\\&=\frac{1}{2}x+\frac{1}{2}x=x.
	\end{align*}
	By Theorem \ref{33},  $\{\theta_{w}\}_{w\in\Omega}$ is a $(C$-$C^{'})$-controlled operator dual of $\{\Lambda_{w}\}_{w\in\Omega}$.
\end{proof}

\begin{remark}
	By  Proposition \ref{02} and Theorem \ref{33},  $\{\Gamma_{w}\}_{w\in \Omega}$ is a $(C$-$C^{'})$-controlled operator dual of $\{\Lambda_{w}\}_{w\in\Omega}$ if and only if $T_{\Gamma}$ is a right inverse of $KT^{\ast}_{\Lambda}$, where   $T_{\Lambda}$ and $T_{\Gamma}$ are $(C$-$C^{'})$-controlled continuous $\ast$-$g$-frame transforms of $\{\Lambda_{w}\}_{w\in\Omega}$ and $\{\Gamma_{w}\}_{w\in \Omega}$, respectively. So we can characterize all of the $(C$-$C^{'})$-controlled operator duals of  $\{\Lambda_{w}\}_{w\in\Omega}$ by  a set of all right inverses of $KT_{\Lambda}^{\ast}$.
\end{remark}

\begin{theorem}\label{12}
	The set of all $(C$-$C^{'})$-controlled continuous $\ast$-$g$-Bessel family for $U$ with respect to $\{V_{w}: w\in\Omega\}$ is
	\begin{equation*}
		\biggl\{\{P_{w}\theta\}_{w\in\Omega}: \theta\in  End_{\mathcal{A}}^{\ast}(U,\oplus_{w\in\Omega}V_{w}) \biggr\},
	\end{equation*}
	where $P_{w}$ is the orthogonal projection on $V_{w}$.
\end{theorem}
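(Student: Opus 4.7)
The claim is a set equality, and the plan is to prove it by two inclusions, making essential use of the standing commutativity assumption that $C$ and $C'$ commute with each other and (as appropriate) with the adjointable products $\Lambda_\omega^\ast \Lambda_\omega$ arising in each direction.

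For the inclusion $\supseteq$, I would start from an arbitrary $\theta \in End_{\mathcal{A}}^{\ast}(U, \oplus_{w\in\Omega} V_w)$ and set $\Lambda_w := P_w \theta \in End_{\mathcal{A}}^{\ast}(U, V_w)$. The key identity is that, by the definition of the inner product on $\oplus V_w$, one has $\int_{\Omega}\langle P_w \theta u, P_w \theta v\rangle\, d\mu(w) = \langle \theta u, \theta v\rangle_{\oplus V_w}$ for all $u, v \in U$. Taking $u = Cx$, $v = C'x$ and using commutativity to rewrite the right-hand side as $\langle \theta(CC')^{\frac{1}{2}}x, \theta(CC')^{\frac{1}{2}}x\rangle$, Lemma \ref{l1} bounds it by $\|\theta\|^{2}\|CC'\| \langle x,x\rangle$, which provides the Bessel bound $B = \|\theta\|\,\|CC'\|^{1/2}\,1_{\mathcal{A}}$.

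For the inclusion $\subseteq$, given a $(C,C')$-controlled continuous $\ast$-$g$-Bessel family $\{\Lambda_w\}$, I define $\theta: U \to \oplus_{w\in\Omega} V_w$ by $\theta x = \{\Lambda_w x\}_{w\in\Omega}$, so that $P_w \theta = \Lambda_w$ automatically. The substitution $x \mapsto (CC')^{-\frac{1}{2}} x$ in the Bessel inequality, combined with commutativity turning the cross-inner-product into $\langle \Lambda_w y, \Lambda_w y\rangle$, yields $\|\int_\Omega \langle \Lambda_w x, \Lambda_w x\rangle\, d\mu(w)\| \leq \|B\|^{2} \|(CC')^{-1}\|\,\|x\|^{2}$, which simultaneously confirms that $\theta x$ lies in $\oplus V_w$ and that $\theta$ is bounded; $\mathcal{A}$-linearity is obvious. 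I would then exhibit the candidate adjoint $\theta^\ast \{y_w\} := \int_\Omega \Lambda_w^\ast y_w\, d\mu(w)$, show it maps into $U$ and is bounded by Cauchy--Schwarz in $\oplus V_w$ combined with the above Bessel estimate, and directly verify the adjoint relation $\langle \theta x, \{y_w\}\rangle = \int_\Omega \langle \Lambda_w x, y_w\rangle\, d\mu(w) = \langle x, \theta^\ast\{y_w\}\rangle$.

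The main obstacle is adjointability of $\theta$ in the $\subseteq$ direction: in Hilbert $C^\ast$-module theory a bounded $\mathcal{A}$-linear map need not be adjointable, so merely observing that $\theta$ is a bounded module map is insufficient. The resolution is the explicit construction of $\theta^\ast$ above, whose well-definedness in $U$ and norm bound are secured by $\|\theta^\ast\{y_w\}\| = \sup_{\|x\|\leq 1}\|\langle \theta^\ast\{y_w\}, x\rangle\|$ together with Cauchy--Schwarz applied in $\oplus V_w$ and the Bessel inequality.
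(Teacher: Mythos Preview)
Your proposal is correct and follows essentially the same two-inclusion strategy as the paper: for $\subseteq$ you take $\theta$ to be the analysis map $x\mapsto\{\Lambda_w x\}$ and observe $P_w\theta=\Lambda_w$, and for $\supseteq$ you verify the Bessel estimate directly via the identity $\int_\Omega\langle P_w\theta u,P_w\theta v\rangle\,d\mu(w)=\langle\theta u,\theta v\rangle$. The main difference is that the paper simply names $T$ as ``the $(C,C')$-controlled continuous $\ast$-$g$-frame transform'' and uses it without further justification, whereas you explicitly construct the candidate adjoint $\theta^\ast\{y_w\}=\int_\Omega\Lambda_w^\ast y_w\,d\mu(w)$ and verify the adjoint relation; in the Hilbert $C^\ast$-module setting this extra care is warranted, since bounded $\mathcal{A}$-linear maps need not be adjointable. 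Your Bessel bound $\|\theta\|\,\|CC'\|^{1/2}\,1_{\mathcal{A}}$ also tracks the $C,C'$ contribution more carefully than the paper's stated bound $\|\theta\|\,1_{\mathcal{A}}$.
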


\begin{proof}
	Let $\{\Lambda_{w}\}_{w\in\Omega}$ be a $(C$-$C^{'})$-controlled continuous $\ast$-$g$-Bessel sequence for $U$ with bound $B$. Then  we have for each $x\in U$, $P_{w}(\{\Lambda_{w}x\}_{w\in\Omega})=\Lambda_{w}x$ and  hence $P_{w}Tx=\Lambda_{w}x$ with $T$ the $(C$-$C^{'})$-controlled continuous $\ast$-$g$-frame transform of $\{\Lambda_{w}\}_{w\in\Omega}$ and so $P_{w}T=\Lambda_{w}$. Thus  for each $x\in U$,
	\begin{equation*}
		\int_{\Omega}\langle \Lambda_{w}C^{'}x,\Lambda_{w}Cx\rangle d\mu(w)=\int_{\Omega}\langle P_{w}TC^{'}x,P_{w}TCx\rangle d\mu(w)\leq B\langle x,x\rangle B^{\ast}.	
	\end{equation*}
	We conclude that $\{P_{w}T\}_{w\in\Omega}$ is a $(C$-$C^{'})$-controlled continuous  $\ast$-$g$-Bessel sequence for $U$.\\
	For the converse let $\theta\in  End_{\mathcal{A}}^{\ast}(U,\oplus_{w\in\Omega}V_{w})$ then,
	\begin{align*}
		\int_{\Omega}\langle P_{w}\theta C^{'} x, P_{w}\theta C x\rangle d\mu(w)&=\int_{\Omega}\langle \theta_{w}C^{'}x,\theta_{w}Cx\rangle d\mu(w)\\
		&=\langle \theta C^{'}x,\theta Cx\rangle\leq\|\theta\|^{2}\langle C^{'}x, Cx\rangle\\
		&= (\|\theta\|1_{\mathcal{A}})\langle x,x\rangle (\|\theta\|1_{\mathcal{A}})^{\ast}.	
	\end{align*}
	So $\{P_{w}\theta\}_{w\in\Omega}$ is a $(C$-$C^{'})$-controlled continuous $\ast$-$g$-Bessel family for $U$ with upper bound $\|\theta\|1_{\mathcal{A}}$.
\end{proof}

\begin{theorem}\label{66}
	Let $\{\Lambda_{w}\}_{w\in\Omega}$ be a $(C$-$C^{'})$-controlled continuous $\ast$-$g$-frame for $U$ with $(C$-$C^{'})$-controlled continuous $\ast$-$g$-frame transform $T$. If $\theta : U\rightarrow \oplus_{w\in\Omega}V_{w}$ is an adjointable right inverse operator of $KT^{\ast}$, then $\{P_{w}\theta\}_{w\in\Omega}$ is a $(C$-$C^{'})$-controlled operator dual of $\{\Lambda_{w}\}_{w\in\Omega}$ with the corresponding invertible operator $K$.
\end{theorem}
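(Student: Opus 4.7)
The plan is to verify the two conditions that make $\{P_w \theta\}_{w\in\Omega}$ a $(C, C')$-controlled continuous operator dual of $\{\Lambda_w\}_{w\in\Omega}$ with corresponding invertible operator $K$: namely, that it is a $(C, C')$-controlled continuous $\ast$-$g$-frame for $U$, and that it satisfies the reconstruction identity
\begin{equation*}
x = \int_\Omega C' \Lambda_w^{\ast} P_w \theta K C x \, d\mu(w), \qquad \forall x \in U.
\end{equation*}

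First, I would invoke Theorem \ref{12} to obtain immediately that $\{P_w \theta\}_{w\in\Omega}$ is a $(C, C')$-controlled continuous $\ast$-$g$-Bessel sequence for $U$, since by hypothesis $\theta \in End_{\mathcal{A}}^{\ast}(U, \oplus_{w\in\Omega} V_w)$. This delivers the Bessel half for free and sets up the applicability of Theorem \ref{33}.

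Next, I would translate the hypothesis that $\theta$ is an adjointable right inverse of $KT^{\ast}$, that is $KT^{\ast} \theta = I_U$, into the required reconstruction identity. The point is that $T^{\ast}$ is the synthesis operator associated with $\{\Lambda_w\}_{w\in\Omega}$, so the scalar identity $KT^{\ast} \theta x = x$ unfolds, component by component, into an integral of the form $\int_\Omega \Lambda_w^{\ast} P_w \theta x \, d\mu(w)$ acted on by $K$. Using the commutativity of $C$ and $C'$ with each other and with $\Lambda_w^{\ast} \Lambda_w$ (in force throughout Section \ref{section3}), I would pull $C'$ outside the integral on the left and precompose with $C$ on the argument to reshape the identity precisely into $\int_\Omega C' \Lambda_w^{\ast} P_w \theta K C x \, d\mu(w) = x$.

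Finally, with the Bessel property of $\{P_w \theta\}_{w\in\Omega}$ from step one and the reconstruction identity from step two established through the invertible adjointable operator $K$, I would apply Theorem \ref{33}. That theorem guarantees that two $(C, C')$-controlled continuous $\ast$-$g$-Bessel sequences linked by such an identity are automatically $(C, C')$-controlled continuous $\ast$-$g$-frames and are mutual operator duals. Combined with Proposition \ref{02}, this gives that $\{P_w \theta\}_{w\in\Omega}$ is a $(C, C')$-controlled operator dual of $\{\Lambda_w\}_{w\in\Omega}$ with corresponding invertible operator $K$, as required.

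The principal obstacle is the bookkeeping in the second step: reconciling the abstract right-inverse relation $KT^{\ast} \theta = I_U$, which is phrased in terms of the frame transform $T$, with the concrete integral identity of the operator-dual definition, which carries the control operators $C$ and $C'$ symmetrically on either side of $\Lambda_w^{\ast}$. The commutativity hypotheses on $C$, $C'$, and $\Lambda_w^{\ast} \Lambda_w$ are precisely what permits the factors to be moved into the positions required by the definition; once that rearrangement is done, the remainder of the argument reduces to citing Theorems \ref{12} and \ref{33}.
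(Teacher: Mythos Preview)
Your approach is essentially the same as the paper's: both invoke Theorem~\ref{12} for the Bessel property, unpack the right-inverse relation $KT^{\ast}\theta = I_{U}$ into the integral reconstruction identity, and finish via Theorem~\ref{33} (together with Proposition~\ref{02}). The only minor difference is that the paper first takes adjoints to obtain $\theta^{\ast}TK^{\ast}=I_{U}$, uses surjectivity of $\theta^{\ast}$ and Lemma~\ref{3} to establish the lower frame bound for $\{P_{w}\theta\}_{w\in\Omega}$ directly, and then reads off the duality with corresponding operator $K^{\ast}$ before flipping; you instead delegate the lower-bound work entirely to Theorem~\ref{33}, which is a legitimate shortcut.
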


\begin{proof}
	By Theorem \ref{12}, $\{P_{w}\theta\}_{w\in\Omega}$ is a $(C$-$C^{'})$-controlled continuous $\ast$-$g$-Bessel sequence for $U$. Thus  $KT^{\ast}\theta=I_{U}$. Since  $\theta$ is a right inverse of $KT^{\ast}$,  $I_{U}=\theta^{\ast}TK^{\ast}$ and  hence $\theta^{\ast}$ is surjective. So by Lemma \ref{3},  
	\begin{equation*}
		\|(\theta^{\ast}\theta)^{-1}\|^{-1}\langle x,x\rangle \leq \langle \theta x,\theta x\rangle=\int_{\Omega}\langle P_{w}\theta C^{'} x,P_{w}\theta C x\rangle d\mu(w)\leq\|\theta\|^{2}\langle x,x\rangle, \;\forall x\in U.
	\end{equation*}
	Thus $\{P_{w}\theta\}_{w\in\Omega}$ is a $(C$-$C^{'})$-controlled continuous $\ast$-$g$-frame sequence for $U$ with $(C$-$C^{'})$-controlled continuous $\ast$-$g$-frame transform $\theta$. So  $x=C^{'}\theta^{\ast}TK^{\ast}Cx=\int_{\Omega}C^{'}(P_{w}\theta)^{\ast}\Lambda_{w} K^{\ast}Cxd\mu(w)$ for each $x\in U$.
	Thus we obtain that  $\{P_{w}\theta\}_{w\in\Omega}$ is a $(C$-$C^{'})$-controlled operator dual of $\{\Lambda_{w}\}_{w\in\Omega}$ with corresponding invertible operator $K^{\ast}$, the proof is complete by Theorem \ref{33}.
\end{proof}
\begin{theorem}
	Let $\{\Lambda_{w}\}_{w\in\Omega}$ be a $(C$-$C)$-controlled continuous $\ast$-$g$-frame for $U$ with $(C$-$C)$-controlled continuous $\ast$-$g$-frame transform  $T$ and $(C$-$C)$-controlled continuous $\ast$-$g$-frame operator $S$. Let $K$ be an invertible operator on $U$ and $\{G_{w}\}_{w\in\Omega}$ be a $(C$-$C)$-controlled continuous $\ast$-$g$-Bessel sequence for $U$. Then every $(C$-$C)$-controlled operator dual for $\{\Lambda_{w}\}_{w\in\Omega}$ is of the form 
	\begin{equation*}		\Lambda_{w}CS^{-1}K^{-1}+G_{w}C-\int_{\Omega}\Lambda_{w}CS^{-1}\Lambda^{\ast}_{t}G_{t}.
	\end{equation*}
\end{theorem}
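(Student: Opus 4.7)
The approach is to combine the characterization of $(C,C)$-controlled operator duals via right inverses of $KT_{\Lambda}^{\ast}$ with the explicit parametrization of such right inverses established in Theorem~\ref{55}. From the remark preceding Theorem~\ref{12} together with Theorem~\ref{66}, a family $\{\Gamma_{w}\}_{w\in\Omega}$ is a $(C,C)$-controlled operator dual of $\{\Lambda_{w}\}_{w\in\Omega}$ with corresponding invertible operator $K$ if and only if the analysis operator $T_{\Gamma}$ is a right inverse of $KT_{\Lambda}^{\ast}$. So describing every dual reduces to describing every such right inverse and then reading off its $w$-th coordinate $\Gamma_{w}=P_{w}T_{\Gamma}$, where $P_{w}$ denotes the orthogonal projection of $\oplus_{w\in\Omega}V_{w}$ onto $V_{w}$.

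Applying Theorem~\ref{55} directly, any such $T_{\Gamma}$ has the form
\begin{equation*}
T_{\Gamma}=T_{\Lambda}S^{-1}K^{-1}+(I_{U}-T_{\Lambda}S^{-1}T_{\Lambda}^{\ast})\theta,
\end{equation*}
for some $\theta\in End_{\mathcal{A}}^{\ast}(U,\oplus_{w\in\Omega}V_{w})$. By Theorem~\ref{12}, the family $G_{w}:=P_{w}\theta$ is a $(C,C)$-controlled continuous $\ast$-$g$-Bessel sequence, and conversely any Bessel sequence $\{G_{w}\}_{w\in\Omega}$ arises as $\{P_{w}\theta\}_{w\in\Omega}$ for its analysis operator $\theta=T_{G}$. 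This identification turns the parametrization above into a parametrization in terms of an arbitrary Bessel sequence $\{G_{w}\}$, which is the content of the theorem.

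The third step is to apply $P_{w}$ term by term. Using $P_{w}T_{\Lambda}=\Lambda_{w}C$ yields the summand $\Lambda_{w}CS^{-1}K^{-1}$; the contribution $P_{w}T_{G}=G_{w}C$ gives the second summand; finally the cross term $P_{w}T_{\Lambda}S^{-1}T_{\Lambda}^{\ast}T_{G}$, after inserting the synthesis formula $T_{\Lambda}^{\ast}\{y_{t}\}_{t\in\Omega}=\int_{\Omega}C\Lambda_{t}^{\ast}y_{t}\,d\mu(t)$, produces the integral term $\int_{\Omega}\Lambda_{w}CS^{-1}\Lambda_{t}^{\ast}G_{t}\,d\mu(t)$ appearing in the stated expression. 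Assembling the three contributions with the correct signs gives the announced formula for $\Gamma_{w}$.

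The main obstacle is bookkeeping: lining up the $C$ operators carried by the controlled analysis and synthesis maps with those in the stated formula, and justifying the interchange of $P_{w}$ with the Bochner integral, which is routine because $P_{w}$ is adjointable and hence continuous. For completeness one also verifies the converse direction, namely that any Bessel $\{G_{w}\}$ yields a genuine dual: substituting the proposed $\Gamma_{w}$ into $\int_{\Omega}C\Lambda_{w}^{\ast}\Gamma_{w}KCx\,d\mu(w)$ and invoking $T_{\Lambda}^{\ast}T_{\Lambda}=S$ collapses the three pieces to $x$, confirming that $\{\Gamma_{w}\}$ is a $(C,C)$-controlled operator dual with corresponding invertible operator $K$.
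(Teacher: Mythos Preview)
Your proposal is correct and follows essentially the same route as the paper: both arguments hinge on Theorem~\ref{55} (the parametrization of right inverses of $KT_{\Lambda}^{\ast}$), Theorem~\ref{12} (identifying adjointable $\theta$ with Bessel sequences $\{G_{w}\}$), and Theorem~\ref{66} (passing between right inverses and operator duals). The only cosmetic difference is that the paper starts from the formula and verifies it defines a right inverse, whereas you start from an arbitrary dual and derive the formula, but the algebraic content and cited results are identical.
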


\begin{proof}
	Suppose that $\{\Lambda_{w}\}$ is a $(C$-$C)$-controlled continuous $\ast$-$g$-frame for $U$ with upper $(C$-$C)$-controlled continuous $\ast$-$g$-frame bound $B$. Let $T_{G}$ be a $(C$-$C)$-controlled continuous $\ast$-$g$-frame transform of $(C$-$C)$-controlled continuous $\ast$-$g$-Bessel sequence $\{G_{w}\}_{w\in\Omega}$ with $(C$-$C)$-controlled continuous $\ast$-$g$-frame bound $E$. \\
	Put for every $w\in\Omega$,
	\begin{equation*}	\theta_{w}C=\Lambda_{w}CS^{-1}K^{-1}+G_{w}C-\int_{\Omega}\Lambda_{w}CS^{-1}\Lambda^{\ast}_{t}G_{t}.
	\end{equation*}
	Then, for each $x\in U$, we have 
	\begin{align}\label{eq15}
		\theta_{w}C x&=\Lambda_{w}CS^{-1}K^{-1}x+G_{w}Cx-\int_{\Omega}\Lambda_{w}S^{-1}\Lambda_{t}^{\ast}G_{t}Cxd\mu(t)\notag\\&=\Lambda_{w}CS^{-1}K^{-1}x+G_{w}Cx-\Lambda_{w}S^{-1}\int_{\Omega}C\Lambda_{t}^{\ast}G_{t}xd\mu(t)\notag\\&=\Lambda_{w}CS^{-1}K^{-1}x+G_{w}Cx-\Lambda_{w}CS^{-1}T^{\ast}T_{G}x.
	\end{align}
	By  \eqref{eq15}, for each $x\in U$, we have 
	\begin{align*}
		\|\{\theta_{w}Cx\}_{w\in\Omega}\|&=\left\|\biggl\{\Lambda_{w}CS^{-1}K^{-1}x+G_{w}Cx-\Lambda_{w}CS^{-1}T^{\ast}T_{G}x\biggr\}_{w\in\Omega}\right\|\\&\leq \|\{\Lambda_{w}CS^{-1}K^{-1}x\}_{w\in\Omega}\|+\|\{G_{w}Cx\}_{w\in\Omega}\|+\|\{\Lambda_{w}CS^{-1}T^{\ast}T_{G}x\}_{w\in\Omega}\|\\&\leq \left\|\int_{\Omega}\langle \Lambda_{w}CS^{-1}K^{-1}x,\Lambda_{w}CS^{-1}K^{-1}x\rangle d\mu(w)\right\|^{\frac{1}{2}}+\left\|\int_{\Omega}\langle G_{w}Cx,G_{w}Cx\rangle d\mu(w)\right\|^{\frac{1}{2}}\\&\qquad\qquad\qquad+\left\|\int_{\Omega}\langle \Lambda_{w}CS^{-1}T^{\ast}T_{G}x,\Lambda_{w}CS^{-1}T^{\ast}T_{G}x\rangle d\mu(w)\right\|^{\frac{1}{2}}\\&\leq ||B\langle S^{-1}K^{-1}x, S^{-1}K^{-1}x\rangle B^{\ast}||^{\frac{1}{2}}+||E\langle x,x\rangle E^{\ast}||^{\frac{1}{2}}\\&\qquad\qquad+||B\langle S^{-1}T^{\ast}T_{G}x,S^{-1}T^{\ast}T_{G}x\rangle B^{\ast}||^{\frac{1}{2}}\\&\leq \big(\|B\| \|S^{-1}\| \|K^{-1}\|+\|E\|+\|S^{-1}\| \|T^{\ast}\| \|T_{G}\|\big)\|x\|.
	\end{align*} 
	Then we can define the operator $\phi: U\rightarrow \oplus_{w\in\Omega}V_{w}$ by $\phi(x)=\{\theta_{w}x\}_{w\in\Omega}$, clearly it is adjointable and we have for each $x\in U$
	\begin{align*}
		P_{w}\phi x&=P_{w}(\{\theta_{w}Cx\}_{w\in\Omega})\\&=\theta_{w}Cx\\&=\Lambda_{w}CS^{-1}K^{-1}x+G_{w}Cx-\Lambda_{w}CS^{-1}T^{\ast}T_{G}x\\&=P_{w}\left(\biggl\{\Lambda_{w}CS^{-1}K^{-1}x+G_{w}Cx-\Lambda_{w}CS^{-1}T^{\ast}T_{G}x\biggr\}_{w\in\Omega}\right)\\&=P_{w}\big(CTS^{-1}K^{-1}x+T_{G}Cx-TS^{-1}T^{\ast}T_{G}Cx\big).
	\end{align*}
	Hence 
	\begin{equation*}
		\phi=TS^{-1}K^{-1}+T_{G}-TS^{-1}T^{\ast}T_{G}=TS^{-1}K^{-1}+(I_{U}-TS^{-1}T^{\ast})T_{G}.
	\end{equation*}
	By Theorem \ref{55}, $\phi$ is a right inverse of $KT^{\ast}$, and by Theorem \ref{66}, $\{\theta_{w}\}_{w\in\Omega}$ is an operator dual of $\{\Lambda_{w}\}_{w\in\Omega}$.
\end{proof}
\section{Stability Problem for controlled continuous $\ast$-$g$-frame in Hilbert $C^{\ast}$-modules}\label{section5}

The question of stability plays an important role in various fields of applied mathematics. The classical theorem of the stability of a base is due to Paley and Wiener \cite{Paley}. It is based on the fact that a bounded operator T on a Banach space is invertible if  $\|I-T\|<1$.
\begin{theorem} \cite{Paley} 
	Let $\{f_{i}\}_{i\in\mathbb{N}}$ be a basis of a Banach space $X$, and $\{g_{i}\}_{i\in\mathbb{N}}$ be  a sequence of vectors in $X$. If there exists a constant $\lambda\in[0,1)$ such that
	\begin{equation*}
		\Big\|\sum_{i\in\mathbb{N}}c_{i}(f_{i}-g_{i})\Big\|\leq\lambda\Big\|\sum_{i\in\mathbb{N}}c_{i}f_{i}\Big\|
	\end{equation*}
	for all finite sequences  $\{c_{i}\}_{i\in\mathbb{N}}$ of scalars, then $\{g_{i}\}_{i\in\mathbb{N}}$ is also a basis for $X$.
\end{theorem}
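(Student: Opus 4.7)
The plan is to construct an invertible bounded operator $T$ on $X$ that sends the basis $\{f_i\}$ to $\{g_i\}$, so that the image of a basis under an invertible bounded operator is again a basis. The invertibility will follow from the classical Neumann series argument once $\|I-T\|<1$ is established on all of $X$.

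First, I would define $T$ on the linear span $E$ of $\{f_i\}$ by $T\bigl(\sum_{i=1}^n c_i f_i\bigr)=\sum_{i=1}^n c_i g_i$. This is well-defined because the coefficients $c_i$ are uniquely determined by $\{f_i\}$ being a Schauder basis. The hypothesis then reads exactly $\|(I-T)x\|\leq\lambda\|x\|$ for $x\in E$, so $I-T$ is bounded on $E$ with norm at most $\lambda$.

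Next I would extend $T$ to all of $X$. For arbitrary $x\in X$ write $x=\sum_{i=1}^{\infty}c_i(x)f_i$ via the basis, and set $x_n=\sum_{i=1}^n c_i(x)f_i$, $y_n=\sum_{i=1}^n c_i(x)g_i$. Applying the hypothesis to $x_n-x_m$ gives
\begin{equation*}
\|y_n-y_m\|\leq\|x_n-x_m\|+\lambda\|x_n-x_m\|=(1+\lambda)\|x_n-x_m\|,
\end{equation*}
so $\{y_n\}$ is Cauchy and $Tx:=\lim_n y_n$ defines a bounded operator on $X$ satisfying $\|(I-T)x\|\leq\lambda\|x\|$ for all $x\in X$ by continuity. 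Since $\lambda<1$, the Neumann series gives that $T=I-(I-T)$ is invertible in $\mathcal{B}(X)$ with $T^{-1}=\sum_{k=0}^{\infty}(I-T)^k$.

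Finally I would verify that $\{g_i\}$ is a basis. Given $x\in X$, set $z=T^{-1}x$ and expand $z=\sum_i c_i f_i$; applying $T$ and using continuity together with $T(\sum_{i=1}^n c_i f_i)=\sum_{i=1}^n c_i g_i$ yields $x=\sum_i c_i g_i$, giving existence of the expansion. For uniqueness, suppose $\sum_i e_i g_i=0$; then the partial sums $\sum_{i=1}^n e_i g_i=T\bigl(\sum_{i=1}^n e_i f_i\bigr)$ tend to $0$, and applying the bounded operator $T^{-1}$ shows $\sum_{i=1}^n e_i f_i\to 0$, whence $e_i=0$ for all $i$ by the basis property of $\{f_i\}$. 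The main subtlety will be the extension step: carefully showing that the operator defined on finite partial sums passes to a bounded operator on all of $X$ (using that $\{f_i\}$ is a \emph{Schauder} basis so that every element is a limit of its partial sums), rather than just being defined on the algebraic span $E$.
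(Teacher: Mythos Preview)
Your argument is correct and is the standard Paley--Wiener/Neumann-series proof. Note, however, that the paper does not supply its own proof of this theorem: it is quoted from the reference \cite{Paley} as a classical result, with no accompanying argument. So there is nothing in the paper to compare against; your write-up would in fact add a proof where the authors chose simply to cite one.
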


\begin{theorem}
	Let $\{\Lambda_{w}\in End_{\mathcal{A}}^{\ast}(U,V_{w}): w\in\Omega\}$ be a $(C$-$C)$-controlled continuous $\ast$-$g$-frame for $U$, with lower and upper bounds $A$ and $B$, respectively. Let $\Gamma_{w}\in End_{\mathcal{A}}^{\ast}(U,V_{w})$ for any $w\in\Omega$. Then the following are equivalent
	\begin{itemize}
		\item [(1)] $\{\Gamma_{w}\in End_{\mathcal{A}}^{\ast}(U,V_{w}): w\in\Omega\}$ is a $(C$-$C)$-controlled continuous $\ast$-$g$-frame for $U$.
		\item[(2)] There exists a constant $M>0$ such that for any $x\in U$, one has
		\begin{multline}\label{5.1}
			\bigg\|\int_{\Omega}\langle(\Lambda_{w}-\Gamma_{w})Cx,(\Lambda_{w}-\Gamma_{w})Cx\rangle d\mu(w)\bigg\|\\
			\leq M\min\bigg\{\bigg\|\int_{\Omega}\langle\Lambda_{w}Cx,\Lambda_{w}Cx\rangle d\mu(w)\bigg\|,\bigg\|\int_{\Omega}\langle\Gamma_{w}Cx,\Gamma_{w}Cx\rangle d\mu(w)\bigg\|\bigg\}.
		\end{multline}
	\end{itemize}
\end{theorem}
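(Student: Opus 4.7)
The plan is to prove the two directions by comparing the $\Lambda$ and $\Gamma$ quadratic quantities in the $C^{\ast}$-module norm, using the triangle inequality in $\oplus_{w\in\Omega}V_{w}$ together with the norm characterization of controlled continuous $\ast$-$g$-frames established earlier (Theorem \ref{t2}, specialized to $C'=C$).

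\medskip

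\textbf{Direction }(1)$\Rightarrow$(2).\textbf{\ } Assume $\{\Gamma_{w}\}_{w\in\Omega}$ is also a $(C$-$C)$-controlled continuous $\ast$-$g$-frame with bounds $A_{\Gamma},B_{\Gamma}$. Set
\[
\alpha(x):=\Bigl\|\!\int_{\Omega}\!\langle\Lambda_{w}Cx,\Lambda_{w}Cx\rangle d\mu(w)\Bigr\|^{1/2},\quad
\beta(x):=\Bigl\|\!\int_{\Omega}\!\langle\Gamma_{w}Cx,\Gamma_{w}Cx\rangle d\mu(w)\Bigr\|^{1/2}.
\]
By the triangle inequality in $\oplus_{w\in\Omega}V_{w}$, the square root of the left-hand side of \eqref{5.1} is $\leq\alpha(x)+\beta(x)$. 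Using the lower frame bound of $\Lambda$ (resp.\ of $\Gamma$) in the form $\|A^{-1}\|^{-1}\|x\|\leq\alpha(x)$ (resp.\ $\|A_{\Gamma}^{-1}\|^{-1}\|x\|\leq\beta(x)$) together with the upper bounds $\alpha(x)\leq\|B\|\|x\|$, $\beta(x)\leq\|B_{\Gamma}\|\|x\|$, I will show both
\[
(\alpha(x)+\beta(x))^{2}\leq M_{1}\alpha(x)^{2}\qquad\text{and}\qquad(\alpha(x)+\beta(x))^{2}\leq M_{2}\beta(x)^{2},
\]
for explicit constants $M_{1},M_{2}$ depending only on $\|A^{-1}\|,\|A_{\Gamma}^{-1}\|,\|B\|,\|B_{\Gamma}\|$. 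Taking $M:=\max(M_{1},M_{2})$ yields \eqref{5.1}.

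\medskip

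\textbf{Direction }(2)$\Rightarrow$(1).\textbf{\ } Keeping the notation above, the hypothesis gives
\[
\Bigl\|\!\int_{\Omega}\!\langle(\Lambda_{w}-\Gamma_{w})Cx,(\Lambda_{w}-\Gamma_{w})Cx\rangle d\mu(w)\Bigr\|^{1/2}\leq\sqrt{M}\min\{\alpha(x),\beta(x)\}.
\]
Using the triangle inequality once, $\beta(x)\leq\alpha(x)+\sqrt{M}\alpha(x)=(1+\sqrt{M})\alpha(x)\leq(1+\sqrt{M})\|B\|\|x\|$, which yields the upper bound for $\Gamma$. For the lower bound, I use the triangle inequality in the other direction and the $\beta$-version of the hypothesis:
\[
\alpha(x)\leq\beta(x)+\sqrt{M}\beta(x)=(1+\sqrt{M})\beta(x),
\]
combined with the lower frame bound $\|A^{-1}\|^{-1}\|x\|\leq\alpha(x)$, to obtain
\[
\frac{\|A^{-1}\|^{-1}}{1+\sqrt{M}}\|x\|\leq\beta(x).
\]
Thus $\{\Gamma_{w}\}_{w\in\Omega}$ satisfies the norm frame inequalities, and Theorem \ref{t2} promotes this to a genuine $(C$-$C)$-controlled continuous $\ast$-$g$-frame inequality.

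\medskip

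\textbf{Main obstacle.} The delicate point is the asymmetry in \eqref{5.1}, where the minimum appears on the right-hand side: for the forward direction one cannot simply derive a single bound by $\alpha(x)+\beta(x)$; one must separately produce two constants, each controlling $\alpha(x)+\beta(x)$ in terms of $\alpha(x)$ alone and in terms of $\beta(x)$ alone. Both are possible precisely because of the two-sided frame inequalities, but keeping the bookkeeping straight (in the $C^{\ast}$-algebra norm versus the order on $\mathcal{A}$) is where care is required; the rest of the argument is an application of the triangle inequality and Theorem \ref{t2}.
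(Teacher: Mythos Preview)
Your proposal is correct and follows essentially the same route as the paper's proof: both directions rely on the triangle inequality in $\oplus_{w\in\Omega}V_{w}$ applied to $\{(\Lambda_{w}-\Gamma_{w})Cx\}_{w\in\Omega}$, combined with the two-sided frame norm inequalities for $\Lambda$ and $\Gamma$, and the conclusion in $(2)\Rightarrow(1)$ is obtained via the norm characterization (your Theorem~\ref{t2}). Your choice $M=\max(M_{1},M_{2})$ is the right one; the paper writes $\min$ at that step, which is a slip, so your bookkeeping is in fact cleaner there.
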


\begin{proof}
	$(1)\Rightarrow(2)$. Suppose that $\{\Gamma_{w}\in End_{\mathcal{A}}^{\ast}(U,V_{w}): w\in\Omega\}$ is a $(C$-$C)$-controlled continuous $\ast$-$g$-frame for $U$ with lower and upper bounds $E$ and $F$, respectively. Then for any $x\in U$, we have
	\begin{align*}
		\bigg\|\int_{\Omega}\langle(\Lambda_{w}-\Gamma_{w})Cx,&(\Lambda_{w}-\Gamma_{w})Cx\rangle d\mu(w)\bigg\|^{\frac{1}{2}}=\big\|\{(\Lambda_{w}-\Gamma_{w})Cx\}_{w\in\Omega}\big\|\\
		&\leq\big\|\{\Lambda_{w}Cx\}_{x\in\Omega}\big\|+\big\|\{\Gamma_{w}Cx\}_{x\in\Omega}\big\|\\
		&=\bigg\|\int_{\Omega}\langle\Lambda_{w}Cx,\Lambda_{w}Cx\rangle d\mu(w)\bigg\|^{\frac{1}{2}}+\bigg\|\int_{\Omega}\langle\Gamma_{w}Cx,\Gamma_{w}Cx\rangle d\mu(w)\bigg\|^{\frac{1}{2}}\\
		&\leq\|B\|\|\langle x,x\rangle\|^{\frac{1}{2}}+\bigg\|\int_{\Omega}\langle\Gamma_{w}Cx,\Gamma_{w}Cx\rangle d\mu(w)\bigg\|^{\frac{1}{2}}\\
		&\leq\|B\|\|E^{-1}\|\bigg\|\int_{\Omega}\langle\Gamma_{w}Cx,\Gamma_{w}Cx\rangle d\mu(w)\bigg\|^{\frac{1}{2}}+\bigg\|\int_{\Omega}\langle\Gamma_{w}Cx,\Gamma_{w}Cx\rangle d\mu(w)\bigg\|^{\frac{1}{2}}\\
		&=\bigg(\|B\|\|E^{-1}\|+1\bigg)\bigg\|\int_{\Omega}\langle\Gamma_{w}Cx,\Gamma_{w}Cx\rangle d\mu(w)\bigg\|^{\frac{1}{2}}.
	\end{align*}
	Similarly,  we have
	\begin{align*}
		\bigg\|\int_{\Omega}\langle(\Lambda_{w}-\Gamma_{w})Cx,(\Lambda_{w}-\Gamma_{w})Cx\rangle d\mu(w)\bigg\|^{\frac{1}{2}}\leq\bigg(\|F\|\|A^{-1}\|+1\bigg)\bigg\|\int_{\Omega}\langle\Lambda_{w}Cx,\Lambda_{w}Cx\rangle d\mu(w)\bigg\|^{\frac{1}{2}}.
	\end{align*}
	Let $M=\min\Bigg\{\bigg(\|B\|\|E^{-1}\|+1\bigg)^{2},\bigg(\|F\|\|A^{-1}\|+1\bigg)^{2}\Bigg\}$.  Then \eqref{5.1} holds.
	
	$(2)\Rightarrow(1)$. Suppose that  \eqref{5.1} holds. For any $x\in U$, we have
	\begin{align*}
		\|A^{-1}\|^{-1}\|\langle x,x\rangle\|^{\frac{1}{2}}&\leq\bigg\|\int_{\Omega}\langle\Lambda_{w}Cx,\Lambda_{w}Cx\rangle d\mu(w)\bigg\|^{\frac{1}{2}}\\
		&\leq\bigg\|\int_{\Omega}\langle(\Lambda_{w}-\Gamma_{w})Cx,(\Lambda_{w}-\Gamma_{w})Cx\rangle d\mu(w)\bigg\|^{\frac{1}{2}}+\bigg\|\int_{\Omega}\langle\Gamma_{w}Cx,\Gamma_{w}Cx\rangle d\mu(w)\bigg\|^{\frac{1}{2}}\\
		&\leq M^{\frac{1}{2}}\bigg\|\int_{\Omega}\langle\Gamma_{w}Cx,\Gamma_{w}Cx\rangle d\mu(w)\bigg\|^{\frac{1}{2}}+\bigg\|\int_{\Omega}\langle\Gamma_{w}Cx,\Gamma_{w}Cx\rangle d\mu(w)\bigg\|^{\frac{1}{2}}\\
		&=\big(1+M^{\frac{1}{2}}\big)\bigg\|\int_{\Omega}\langle\Gamma_{w}Cx,\Gamma_{w}Cx\rangle d\mu(w)\bigg\|^{\frac{1}{2}}.
	\end{align*}
	Also we obtain
	\begin{align*}
		\bigg\|\int_{\Omega}\langle\Gamma_{w}Cx,\Gamma_{w}Cx\rangle d\mu(w)\bigg\|^{\frac{1}{2}}
		&\leq\bigg\|\int_{\Omega}\langle(\Lambda_{w}-\Gamma_{w})Cx,(\Lambda_{w}-\Gamma_{w})Cx\rangle d\mu(w)\bigg\|^{\frac{1}{2}}+\bigg\|\int_{\Omega}\langle\Lambda_{w}Cx,\Lambda_{w}Cx\rangle d\mu(w)\bigg\|^{\frac{1}{2}}\\
		&\leq M^{\frac{1}{2}}\bigg\|\int_{\Omega}\langle\Lambda_{w}Cx,\Lambda_{w}Cx\rangle d\mu(w)\bigg\|^{\frac{1}{2}}+\bigg\|\int_{\Omega}\langle\Lambda_{w}Cx,\Lambda_{w}Cx\rangle d\mu(w)\bigg\|^{\frac{1}{2}}\\
		&=\big(1+M^{\frac{1}{2}}\big)\bigg\|\int_{\Omega}\langle\Lambda_{w}Cx,\Lambda_{w}Cx\rangle d\mu(w)\bigg\|^{\frac{1}{2}}
		&\leq\big(1+M^{\frac{1}{2}}\big)\|B\|\|\langle x,x\rangle\|^{\frac{1}{2}}.
	\end{align*}
	Thus  $\{\Gamma_{w}\in End_{\mathcal{A}}^{\ast}(U,V_{w}): w\in\Omega\}$ is a $(C$-$C)$-controlled continuous $\ast$-$g$-frame for $U$.
\end{proof}

\begin{theorem}
	Let $\{\Lambda_{w}\}_{w\in\Omega}$ be a $(C$-$C)$-controlled continuous $\ast$-$g$-frame for $U$ with bounds $A$ and $B$. If $\{\Gamma_{w}\}_{w\in\Omega}$ is a $(C$-$C)$-controlled continuous $\ast$-$g$-Bessel sequence with bound $E$ such that $||A^{-1}||^{-1}\geq ||E||$, then $\{\Gamma_{w}+\Lambda_{w}\}_{w\in\Omega}$ is a $(C$-$C)$-controlled continuous $\ast$-$g$-frame for $U$.
\end{theorem}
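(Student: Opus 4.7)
The plan is to verify the two $(C$-$C)$-controlled continuous $\ast$-$g$-frame inequalities for $\{\Gamma_{w}+\Lambda_{w}\}_{w\in\Omega}$ in the operator-norm form of Theorem \ref{t2}, and then use Theorem \ref{t2} as the bridge back to the $\mathcal{A}$-valued definition required by Definition \ref{d3}. The technical engine is the observation that
\[
\Phi \longmapsto \left\|\int_{\Omega}\langle \Phi_{w}Cx,\Phi_{w}Cx\rangle\, d\mu(w)\right\|^{1/2}=\|\{\Phi_{w}Cx\}_{w\in\Omega}\|
\]
is a seminorm on the Hilbert $C^{\ast}$-module $\oplus_{w\in\Omega}V_{w}$, so the usual triangle and reverse-triangle inequalities apply.

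For the upper (Bessel) bound, I would apply the triangle inequality:
\[
\|\{(\Gamma_{w}+\Lambda_{w})Cx\}_{w\in\Omega}\|\leq \|\{\Gamma_{w}Cx\}_{w\in\Omega}\|+\|\{\Lambda_{w}Cx\}_{w\in\Omega}\|\leq (\|E\|+\|B\|)\|x\|,
\]
where the second estimate comes from applying Theorem \ref{t2} to the Bessel family $\{\Gamma_{w}\}_{w\in\Omega}$ and the frame $\{\Lambda_{w}\}_{w\in\Omega}$. Squaring gives the required norm-form upper bound with constant $(\|E\|+\|B\|)^{2}$.

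For the lower bound, the reverse triangle inequality together with Theorem \ref{t2} applied to $\{\Lambda_{w}\}_{w\in\Omega}$ yields
\[
\|\{(\Gamma_{w}+\Lambda_{w})Cx\}_{w\in\Omega}\|\geq \|\{\Lambda_{w}Cx\}_{w\in\Omega}\|-\|\{\Gamma_{w}Cx\}_{w\in\Omega}\|\geq \bigl(\|A^{-1}\|^{-1}-\|E\|\bigr)\|x\|.
\]
The hypothesis $\|A^{-1}\|^{-1}\geq \|E\|$ (which I would read as strict to produce a strictly nonzero frame bound, as required by Definition \ref{d3}) makes the right-hand side a positive multiple of $\|x\|$. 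Squaring produces the norm-form lower bound.

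The main obstacle is the delicate passage from scalar norm inequalities back to $\mathcal{A}$-valued inequalities with strictly nonzero elements of $\mathcal{A}$ as bounds. This is exactly what Theorem \ref{t2} provides, so once both scalar inequalities are in hand we may conclude, with lower and upper frame elements given by $(\|A^{-1}\|^{-1}-\|E\|)\,1_{\mathcal{A}}$ and $(\|E\|+\|B\|)\,1_{\mathcal{A}}$ respectively. A secondary care point is that one must invoke the $(C$-$C)$-controlled version of Theorem \ref{t2} for both $\Lambda_{w}$ and $\Gamma_{w}$ (the statement applies directly since the integrals involve $\Lambda_{w}Cx$ paired with itself), so no extra commutation hypotheses on $C$ are needed here.
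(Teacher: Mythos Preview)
Your proposal is correct and follows essentially the same route as the paper: both arguments apply the triangle and reverse-triangle inequalities to $\|\{(\Lambda_{w}+\Gamma_{w})Cx\}_{w\in\Omega}\|$ in $\oplus_{w\in\Omega}V_{w}$, bound the pieces using the frame/Bessel constants, and then invoke the norm-form characterization (Theorem \ref{t2}) to conclude. Your write-up is in fact slightly more explicit than the paper's about the role of Theorem \ref{t2} and about the need for strictness in $\|A^{-1}\|^{-1}\geq\|E\|$.
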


\begin{proof}
	Let $x\in U$, we have 
	\begin{align*}
		\left\|\int_{\Omega}\langle (\Lambda_{w}+\Gamma_{w})Cx,(\Lambda_{w}+\Gamma_{w})Cx\rangle d\mu(w)\right\|^{\frac{1}{2}}&=||\{(\Lambda_{w}+\Gamma_{w})C\}_{w\in\Omega}||\\&\leq ||\{\Lambda_{w}Cx\}_{w\in\Omega}||+||\{\Gamma_{w}Cx\}_{w\in\Omega}||\\&\leq \left\|\int_{\Omega}\langle \Lambda_{w}Cx,\Lambda_{w}Cx\rangle d\mu(w)\right\|^{\frac{1}{2}}+	\left\|\int_{\Omega}\langle \Gamma_{w}Cx,\Gamma_{w}Cx\rangle d\mu(w)\right\|^{\frac{1}{2}} \\&\leq ||B\langle x,x\rangle B^{\ast} ||^{\frac{1}{2}}+||E\langle x,x\rangle E^{\ast} ||^{\frac{1}{2}}\\&\leq ||B||||x||+||E||||x||\\&\leq \big(||B||+||E||\big)||x||.
	\end{align*}
	Thus 
	\begin{equation}\label{eq5.2}
		\left\|\int_{\Omega}\langle (\Lambda_{w}+\Gamma_{w})Cx,(\Lambda_{w}+\Gamma_{w})Cx\rangle d\mu(w)\right\|\leq\big(||B||+||E||\big)^{2}||x||^{2}.
	\end{equation}
	On the other hand, 
	\begin{align*}
		\left\|\int_{\Omega}\langle (\Lambda_{w}+\Gamma_{w})Cx,(\Lambda_{w}+\Gamma_{w})Cx\rangle d\mu(w)\right\|^{\frac{1}{2}}&=||\{(\Lambda_{w}+\Gamma_{w})C\}_{w\in\Omega}||\\&\geq ||\{\Lambda_{w}C\}_{w\in\Omega}||-||\{\Gamma_{w}C\}_{w\in\Omega}||\\&\geq \left\|\int_{\Omega}\langle \Lambda_{w}Cx,\Lambda_{w}Cx\rangle d\mu(w)\right\|^{\frac{1}{2}}-\left\|\int_{\Omega}\langle \Gamma_{w}Cx,\Gamma_{w}Cx\rangle d\mu(w)\right\|^{\frac{1}{2}}\\&\geq ||A^{-1}||^{-1}||x||-||E||||x||\\&\geq (||A^{-1}||^{-1}-||E||)||x||. 
	\end{align*}
	Hence 
	\begin{equation}\label{eq5.3}
		(||A^{-1}||^{-1}-||E||)||x||\leq \left\|\int_{\Omega}\langle (\Lambda_{w}+\Gamma_{w})Cx,(\Lambda_{w}+\Gamma_{w})Cx\rangle d\mu(w)\right\|^{\frac{1}{2}}. 
	\end{equation}
	Therefore, from \eqref{eq5.2} and 
	\eqref{eq5.3}, $\{(\Lambda_{w}+\Gamma_{w})\}_{w\in\Omega}$ is a  $(C$-$C)$-controlled continuous $\ast$-$g$-frame for $U$.
\end{proof}
\begin{theorem}
	Let $\{T_{w}\}_{w \in \Omega}$ be a$(C$-$C)$-controlled continuous $\ast$-$g$-frame  for $End_{\mathcal{A}}^{\ast}(\mathcal{H})$ with bounds A and B, let $\{R_{w}\}_{w \in \Omega} \subset End_{\mathcal{A}}^{\ast}(\mathcal{H})$ and $\{\alpha_{w}\}_{w \in \Omega},\{\beta_{w}\}_{w \in \Omega} \in \mathbb{R}$ be two positively  family.
	If there exist two constants $0\leq \lambda, \mu<1$ such that for any $x \in\mathcal{H} $ we have 
	\begin{align*}
		\bigg\|\int_{\Omega}&\langle (\alpha_{w}T_{w}-\beta_{w}R_{w}) Cx,(\alpha_{w}T_{w}-\beta_{w}R_{w})C x\rangle_{\mathcal{A}} d\mu({\omega})\bigg\|^{\frac{1}{2}}\leq \\
		& \lambda \bigg\|\int_{\Omega}\langle \alpha_{w}T_{w}C x,\alpha_{w}T_{w}Cx\rangle_{\mathcal{A}} d\mu({\omega})\bigg\|^{\frac{1}{2}}
		+\mu\bigg\|\int_{\Omega}\langle \beta_{w}R_{w} Cx,\beta_{w}R_{w}C x\rangle_{\mathcal{A}} d\mu({\omega})\bigg\|^{\frac{1}{2}}.
	\end{align*}
	Then  $\{R_{w}\}_{w \in \Omega}$ is a $(C$-$C)$-controlled continuous $\ast$-$g$-frame for $End_{\mathcal{A}}^{\ast}(\mathcal{H})$.
	
\end{theorem}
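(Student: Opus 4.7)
The plan is to use the classical Paley--Wiener style triangle inequality trick in the Hilbert $C^{\ast}$-module $\oplus_{w\in\Omega}\mathcal{H}$, combined with the characterization of $(C,C)$-controlled continuous $\ast$-$g$-frames by norm inequalities (Theorem \ref{t2}).

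First, I would rewrite the hypothesis in the compact form
\begin{equation*}
\bigl\|\{(\alpha_{w}T_{w}-\beta_{w}R_{w})Cx\}_{w\in\Omega}\bigr\|\leq \lambda\bigl\|\{\alpha_{w}T_{w}Cx\}_{w\in\Omega}\bigr\|+\mu\bigl\|\{\beta_{w}R_{w}Cx\}_{w\in\Omega}\bigr\|,
\end{equation*}
where $\|\cdot\|$ denotes the norm in $\oplus_{w\in\Omega}\mathcal{H}$. Writing $\beta_{w}R_{w}=\alpha_{w}T_{w}-(\alpha_{w}T_{w}-\beta_{w}R_{w})$ and applying the triangle inequality gives $\|\{\beta_{w}R_{w}Cx\}\|\leq\|\{\alpha_{w}T_{w}Cx\}\|+\lambda\|\{\alpha_{w}T_{w}Cx\}\|+\mu\|\{\beta_{w}R_{w}Cx\}\|$, and solving for $\|\{\beta_{w}R_{w}Cx\}\|$ (using $0\le\mu<1$) yields the upper estimate
\begin{equation*}
\bigl\|\{\beta_{w}R_{w}Cx\}\bigr\|\leq \frac{1+\lambda}{1-\mu}\bigl\|\{\alpha_{w}T_{w}Cx\}\bigr\|\leq \frac{1+\lambda}{1-\mu}\|B\|\,\|x\|.
\end{equation*}
Symmetrically, writing $\alpha_{w}T_{w}=\beta_{w}R_{w}+(\alpha_{w}T_{w}-\beta_{w}R_{w})$ and rearranging using $0\le\lambda<1$ gives
\begin{equation*}
\bigl\|\{\beta_{w}R_{w}Cx\}\bigr\|\geq \frac{1-\lambda}{1+\mu}\bigl\|\{\alpha_{w}T_{w}Cx\}\bigr\|\geq \frac{1-\lambda}{1+\mu}\|A^{-1}\|^{-1}\|x\|,
\end{equation*}
the last inequality coming from the lower frame bound of $\{T_{w}\}$ via Theorem \ref{t2}.

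Second, I would transfer these estimates from $\{\beta_{w}R_{w}\}$ to $\{R_{w}\}$. Since $\{\beta_{w}\}$ is a positive bounded family (bounded above, and bounded below away from $0$ is what is implicitly needed; one reads the intended hypothesis this way, otherwise one concludes directly about $\{\beta_{w}R_{w}\}$), absorbing $\sup_{w}\beta_{w}$ and $\inf_{w}\beta_{w}$ into the constants gives scalars $A',B'>0$ with
\begin{equation*}
\|A'\|^{-2}\|\langle x,x\rangle\|\leq \bigl\|\textstyle\int_{\Omega}\langle R_{w}Cx,R_{w}Cx\rangle d\mu(w)\bigr\|\leq \|B'\|^{2}\|\langle x,x\rangle\|.
\end{equation*}

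Finally, I would apply Theorem \ref{t2} (with $C'=C$) to conclude that $\{R_{w}\}_{w\in\Omega}$ is a $(C,C)$-controlled continuous $\ast$-$g$-frame for $End^{\ast}_{\mathcal{A}}(\mathcal{H})$, with explicit $\mathcal{A}$-valued bounds recovered from the scalar bounds above. The main obstacle is bookkeeping: the hypothesis controls the \emph{scaled} families $\{\alpha_{w}T_{w}\}$ and $\{\beta_{w}R_{w}\}$, whereas the conclusion must be stated for $\{R_{w}\}$ itself, so one must carefully track how the factors $\alpha_{w}, \beta_{w}$ enter the constants and invoke their positivity and boundedness at the right moment. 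The Paley--Wiener rearrangement itself is routine once the triangle inequality is set up in $\oplus_{w\in\Omega}\mathcal{H}$; the substantive point is the passage from scalar-norm bounds to $\mathcal{A}$-valued frame inequalities, which is precisely what Theorem \ref{t2} is designed to do.
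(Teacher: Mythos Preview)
Your proposal is correct and follows essentially the same route as the paper: the Paley--Wiener triangle-inequality rearrangement in $\oplus_{w\in\Omega}\mathcal{H}$ to get the two-sided bound $\frac{1-\lambda}{1+\mu}\|\{\alpha_w T_w Cx\}\|\le\|\{\beta_w R_w Cx\}\|\le\frac{1+\lambda}{1-\mu}\|\{\alpha_w T_w Cx\}\|$, followed by absorbing $\sup_w\alpha_w$, $\inf_w\alpha_w$, $\sup_w\beta_w$, $\inf_w\beta_w$ and the frame bounds of $\{T_w\}$ into the constants, and finishing with the norm characterization of Theorem~\ref{t2}. The only slip is that your displayed inequalities drop the $\alpha_w$ factors prematurely (you write $\le\frac{1+\lambda}{1-\mu}\|B\|\,\|x\|$ directly, whereas a $\sup_w\alpha_w$ should appear there), but you correctly flag this as bookkeeping in your final paragraph, and the paper handles it exactly as you describe.
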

\begin{proof}
	For every $x \in \mathcal{H}$, we have
	\begin{align*}
		\|\{\beta_{w}R_{w}Cx\}_{w \in \Omega}\|&\leq \|\{(\alpha_{w}T_{w}-\beta_{w}R_{w})Cx\}_{w \in \Omega}\| +\|\{\alpha_{w}T_{w}Cx\}_{w\in \Omega}\|\\
		&\leq \mu \|\{\beta_{w}R_{w}Cx\}_{w\in \Omega}\| + \lambda \|\{\alpha_{w}T_{w}Cx\}_{w\in \Omega}\|+ \|\{\alpha_{w}T_{w}Cx\}_{w\in \Omega}\|\\
		&=(1+\lambda) \|\{\alpha_{w}T_{w}Cx\}_{w\in \Omega}\|+ \mu \|\{\beta_{w}R_{w}Cx\}_{w \in \Omega}\|.
	\end{align*}
	Then,
	$$(1-\mu)\|\{\beta_{w}R_{w}Cx\}_{w\in \Omega}\|\leq(1+\lambda) \|\alpha_{w}T_{w}Cx\|. $$
	Therefore
	$$(1-\mu)\inf_{\omega \in \Omega} (\beta_{w})\|\{R_{w}Cx\}_{w \in \Omega}\|\leq(1+\lambda) \sup_{\omega \in \Omega}(\alpha_{w})\|\{T_{w}Cx\}_{w\in \Omega}\|. $$
	Hence 
	$$\|\{R_{w}Cx\}_{w\in \Omega}\| \leq \frac{(1+\lambda) \sup_{\omega \in \Omega}(\alpha_{w})}{(1-\mu)\inf_{\omega \in \Omega} (\beta_{w})}\|\{T_{w}Cx\}_{w\in \Omega}\|.$$
	Also, for all $x \in \mathcal{H}$, we have 
	\begin{align*}
		\|\{(\alpha_{w}T_{w}Cx\}_{w\in \Omega}\| &\leq \|\{(\alpha_{w}T_{w}-\beta_{w}R_{w})Cx\}_{w \in \Omega}\| +\|\{\beta_{w}R_{w}Cx\}_{w \in \Omega}\|\\
		&\leq \mu \|\{\beta_{w}R_{w}Cx\}_{w \in \Omega}\| + \lambda \|\{\alpha_{w}T_{w}Cx\}_{w\in \Omega}\|+ \|\{\alpha_{w}T_{w}Cx\}_{w\in \Omega}\|\\
		&= \lambda \|\{\alpha_{w}T_{w}Cx\}_{w\in \Omega}\|+(1+\mu)\|\{\beta_{w}R_{w}Cx\}_{w\in \Omega}\|,
	\end{align*}
	then
	$$(1- \lambda)\|\{\alpha_{w}T_{w}Cx\}_{w\in \Omega}\| \leq (1+\mu)\|\{\beta_{w}R_{w}Cx\}_{w\in \Omega}\|.$$
	Hence 
	$$(1- \lambda)\inf_{\omega \in \Omega} (\alpha_{w})\|\{T_{w}Cx\}_{w\in \Omega}\| \leq (1+\mu)\sup_{\omega \in \Omega}(\beta_{w})\|\{R_{w}Cx\}_{w\in \Omega}\|.$$
	Thus
	$$\frac{(1- \lambda)\inf_{\omega \in \Omega} (\alpha_{w})}{(1+\mu)\sup_{\omega \in \Omega}(\beta_{w})}\|\{T_{w}Cx\}_{w\in \Omega}\| \leq \|\{R_{w}Cx\}_{w\in \Omega}\|.$$
	Therefore
	$$A(\frac{(1- \lambda)\inf_{\omega \in \Omega} (\alpha_{w})}{(1+\mu)\sup_{\omega \in \Omega}(\beta_{w})}) \|\langle x,x\rangle_{\mathcal{A}}\|(\frac{(1- \lambda)\inf_{\omega \in \Omega} (\alpha_{w})}{(1+\mu)\sup_{\omega \in \Omega}(\beta_{w})})A^{*}\leq \|\{R_{w}Cx\}_{w}\|^2 .$$
	So,  
	\begin{align*}
		\|\{R_{w}Cx\}_{w\in \Omega}\|^2&\leq (\frac{(1+ \lambda)\sup_{\omega \in \Omega}(\alpha_{w})}{(1-\mu)\inf_{\omega \in \Omega} (\beta_{w})})^2  \|\{T_{w}Cx\}_{w\in \Omega}\|^2 \\\leq
		& B (\frac{(1+ \lambda)sup(\alpha_{w})}{(1-\mu)\inf_{\omega \in \Omega} (\beta_{w})})  \|\langle x,x \rangle_{\mathcal{A}}\|(\frac{(1+ \lambda)sup(\alpha_{w})}{(1-\mu)\inf_{\omega \in \Omega} (\beta_{w})})B^{*}  .
	\end{align*}
	Hence 
	\begin{align*}
		&A(\frac{(1- \lambda) \inf_{\omega \in \Omega} (\alpha_{w})}{(1+\mu) \sup_{\omega \in \Omega}(\beta_{w})}) \|\langle x,x\rangle_{\mathcal{A}}\|(\frac{(1- \lambda) \inf_{\omega \in \Omega} (\alpha_{w})}{(1+\mu) \sup_{\omega \in \Omega}(\beta_{w})})A^{*}\\&\leq \|\int_{\Omega}\langle R_{w}C x,R_{w}Cx \rangle_{\mathcal{A}} d\mu({\omega})\|\\
		&\leq  B (\frac{(1+ \lambda) \sup_{\omega \in \Omega}(\alpha_{w})}{(1-\mu) \inf_{\omega \in \Omega} (\beta_{w})})  \|\langle x,x \rangle_{\mathcal{A}}\|(\frac{(1+ \lambda) \sup_{\omega \in \Omega}(\alpha_{w})}{(1-\mu) \inf_{\omega \in \Omega} (\beta_{w})})B^{*} 
	\end{align*}
	This give that $\{R_{w}\}_{w \in \Omega}$ is a $(C$-$C)$-controlled continuous $\ast$-$g$-frame for $End_{\mathcal{A}}^{\ast}(\mathcal{H})$.
\end{proof}
\begin{theorem}
	Let $\{T_{w}\}_{w \in \Omega}$ be a $(C$-$C)$-controlled continuous $\ast$-$g$-frame for $End_{\mathcal{A}}^{\ast}(\mathcal{H})$ with bounds $\nu$ and $\delta$. Let $\{R_{w}\}_{w \in \Omega} \in End_{\mathcal{A}}^{\ast}(\mathcal{H})$ and  $\alpha, \, \beta \geq 0$. If $0\leq \alpha + \frac{\beta}{\nu\nu^{\ast}}< 1$ such that for all $x\in \mathcal{H}$, we have  
	$$\bigg\|\int_{\Omega}\langle (T_{w}-R_{w})C x,(T_{w}-R_{w})Cx\rangle_{\mathcal{A}} d\mu({\omega})\bigg\|\leq \alpha \bigg\|\int_{\Omega}\langle T_{w}C x,T_{w}  Cx\rangle_{\mathcal{A}} d\mu({\omega})\bigg\| +\beta \|\langle  x, x\rangle_{\mathcal{A}}\|.$$
	Then $\{R_{w}\}_{w \in \Omega}$ is a $(C$-$C)$-controlled continuous $\ast$-$g$-frame  with bounds $\nu\left(1-\sqrt{\alpha +\frac{\beta}{\nu\nu^{\ast}}}\right)$ and $\delta\left(1+\sqrt{\alpha +\frac{\beta}{\nu \nu^{\ast}}}\right)$.
	
\end{theorem}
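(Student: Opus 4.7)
The statement is a Paley--Wiener style perturbation result, and the natural approach is a direct triangle-inequality argument in the Hilbert $\mathcal{A}$-module $\oplus_{w\in\Omega}V_{w}$ whose squared norm is exactly the quantity $\|\int_{\Omega}\langle\cdot_{w},\cdot_{w}\rangle\,d\mu\|$. Introducing the shorthand
\[
N(S,x):=\left\|\int_{\Omega}\langle S_{w}Cx,S_{w}Cx\rangle_{\mathcal{A}}\,d\mu(w)\right\|^{1/2},
\]
the hypothesis reads $N(T-R,x)^{2}\le\alpha\,N(T,x)^{2}+\beta\,\|\langle x,x\rangle_{\mathcal{A}}\|$, and the frame property of $\{T_{w}\}_{w\in\Omega}$ combined with Theorem \ref{t2} translates into the scalar inequalities $\|\nu^{-1}\|^{-1}\|x\|\le N(T,x)\le\|\delta\|\,\|x\|$, with $\|x\|^{2}\le\|\nu^{-1}\|^{2}N(T,x)^{2}$ being the decisive consequence.

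For the upper frame inequality I would apply the triangle inequality,
\[
N(R,x)\le N(T,x)+N(T-R,x)\le N(T,x)+\sqrt{\alpha\,N(T,x)^{2}+\beta\|x\|^{2}},
\]
and then insert the bound $\|x\|^{2}\le\|\nu^{-1}\|^{2}N(T,x)^{2}$ \emph{inside} the square root (rather than splitting $\sqrt{a+b}$, which would produce a weaker constant). The right-hand side then factors as $\bigl(1+\sqrt{\alpha+\beta\|\nu^{-1}\|^{2}}\bigr)N(T,x)$, and an application of the upper frame bound of $T$ yields the advertised upper constant $\delta\bigl(1+\sqrt{\alpha+\beta/(\nu\nu^{\ast})}\bigr)$, interpreting the expression $\beta/(\nu\nu^{\ast})$ appearing in the statement as the scalar $\beta\|\nu^{-1}\|^{2}$.

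For the lower frame inequality I would use the reverse triangle inequality $N(R,x)\ge N(T,x)-N(T-R,x)$ and perform the analogous manipulation to reach $N(R,x)\ge\bigl(1-\sqrt{\alpha+\beta/(\nu\nu^{\ast})}\bigr)N(T,x)$. Here the standing hypothesis $\alpha+\beta/(\nu\nu^{\ast})<1$ is precisely what is needed to keep this factor strictly positive; combining with the lower frame bound $\|\nu^{-1}\|^{-1}\|x\|\le N(T,x)$ then produces the lower frame constant $\nu\bigl(1-\sqrt{\alpha+\beta/(\nu\nu^{\ast})}\bigr)$.

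The main obstacle is cosmetic rather than analytical: the conclusion is phrased using $\mathcal{A}$-valued frame bounds (multiples of the elements $\nu$ and $\delta$), while the triangle-inequality argument directly delivers only scalar norm inequalities between $\|\int\langle R_{w}Cx,R_{w}Cx\rangle d\mu\|$ and $\|\langle x,x\rangle\|$. I would close the proof by invoking Theorem \ref{t2} in the reverse direction, lifting these scalar inequalities to the $\mathcal{A}$-valued frame inequalities required by Definition \ref{d3}, and thereby identifying $\{R_{w}\}_{w\in\Omega}$ as a $(C$-$C)$-controlled continuous $\ast$-$g$-frame for $End^{\ast}_{\mathcal{A}}(\mathcal{H})$ with the two bounds stated in the theorem.
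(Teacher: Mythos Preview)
Your proposal is correct and follows essentially the same route as the paper: both arguments work in the norm of $\oplus_{w\in\Omega}V_{w}$, apply the (reverse) triangle inequality to $\{R_{w}Cx\}=\{T_{w}Cx\}-\{(T_{w}-R_{w})Cx\}$, and use the lower frame bound of $\{T_{w}\}$ to absorb the $\beta\|\langle x,x\rangle\|$ term into $\frac{\beta}{\nu\nu^{\ast}}N(T,x)^{2}$ before extracting the square root. Your explicit reference to Theorem~\ref{t2} for converting the resulting scalar inequalities back into $\mathcal{A}$-valued frame bounds is a welcome clarification that the paper's proof leaves implicit.
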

\begin{proof}
	Let $\{T_{w}\}_{w \in \Omega}$ be a $(C$-$C)$-controlled continuous $\ast$-$g$-fram with bounds $\nu$ and $\delta$. Then for any $x \in \mathcal{H}$, we have
	\begin{align*}
		\|\{T_{w}Cx\}_{w\in \Omega}\|&\leq \|\{(T_{w}-R_{w})Cx\}_{w\in \Omega}\| +\|\{R_{w}Cx\}_{w\in \Omega}\|\\
		&\leq (\alpha \bigg\|\int_{\Omega}\langle T_{w}C x,T_{w}C x\rangle_{\mathcal{A}} d\mu({\omega})\bigg\| +\beta \|\langle  x, x\rangle_{\mathcal{A}}\|)^{\frac{1}{2}}\\
		& \quad +\bigg\|\int_{\Omega}\langle R_{w}C x,R_{w}C x\rangle_{\mathcal{A}} d\mu({\omega})\bigg\|^{\frac{1}{2}}\\
		&\leq  \left( \alpha \bigg\|\int_{\Omega}\langle T_{w}C x,T_{w}C  x\rangle_{\mathcal{A}} d\mu({\omega})\bigg\| +\frac{\beta}{\nu\nu^{\ast}} \bigg\|\int_{\Omega}\langle T_{w}C x,T_{w}C  x\rangle_{\mathcal{A}} d\mu({\omega})\bigg\|\right) ^{\frac{1}{2}}\\
		&\qquad\qquad\qquad\qquad +\bigg\|\int_{\Omega}\langle R_{w} Cx,R_{w}C x\rangle_{\mathcal{A}} d\mu({\omega})\bigg\|^{\frac{1}{2}}\\
		&=\sqrt{\alpha +\frac{\beta}{\nu\nu^{\ast}}} \|\{T_{w}Cx\}_{w\in \Omega}\|+\bigg\|\int_{\Omega}\langle R_{w} Cx,R_{w}C x\rangle_{\mathcal{A}} d\mu({\omega})\bigg\|^{\frac{1}{2}}.
	\end{align*}
	Therefore
	$$\left(1-\sqrt{\alpha +\frac{\beta}{\nu\nu^{\ast}}}\right)\|\{T_{w}Cx\}_{w\in \Omega}\| \leq \bigg\|\int_{\Omega}\langle R_{w}C x,R_{w}C x\rangle_{\mathcal{A}} d\mu({\omega})\bigg\|^{\frac{1}{2}} .$$
	Thus 
	\begin{align*}
		\nu\left(1-\sqrt{\alpha +\frac{\beta}{\nu\nu^{\ast}}}\right)\|\langle  x, x\rangle_{\mathcal{A}}\|\left(1-\sqrt{\alpha +\frac{\beta}{\nu\nu^{\ast}}}\right)\nu^{\ast}
		&\leq  \bigg\|\int_{\Omega}\langle R_{w}C x,R_{w}C x\rangle_{\mathcal{A}} d\mu({\omega})\bigg\|.\\
	\end{align*}
	Also, we have 
	\begin{align*}
		\|\{R_{w}Cx\}_{w\in \Omega}\|&\leq \|\{(T_{w}-R_{w})Cx\}_{w\in \Omega}\| +\|\{T_{w}Cx\}_{w\in \Omega}\|\\
		&\leq \sqrt{\alpha +\frac{\beta}{\nu\nu^{\ast}}} \|\{T_{w}Cx\}_{w\in \Omega}\|+\|\{T_{w}Cx\}_{w\in \Omega}\|\\
		&= \left(1+\sqrt{\alpha +\frac{\beta}{\nu\nu^{\ast}}}\right) \|\{T_{w}x\}_{w\in \Omega}\|\\
		&\leq \sqrt{\delta}  \left(1+\sqrt{\alpha +\frac{\beta}{\nu\nu^{\ast}}}\right) \|\langle x,x\rangle_{\mathcal{A}}\|^{\frac{1}{2}}\sqrt{\delta^{\ast}}.
	\end{align*}
	Hence   
	$$\bigg\|\int_{\Omega}\langle R_{w}C x,R_{w}C x\rangle_{\mathcal{A}} d\mu({\omega})\bigg\|\leq \delta \left(1+\sqrt{\alpha +\frac{\beta}{\nu\nu^{\ast}}}\right) \|\langle x,x\rangle_{\mathcal{A}}\|\left(1+\sqrt{\alpha +\frac{\beta}{\nu\nu^{\ast}}}\right)\delta^{\ast}.$$
	Therefore 
	\begin{align*}
		\small \nu\left(1-\sqrt{\alpha +\frac{\beta}{\nu\nu^{\ast}}}\right)\|\langle x, x\rangle_{\mathcal{A}}\| & \left(1-\sqrt{\alpha +\frac{\beta}{\nu\nu^{\ast}}}\right)\nu^{\ast}\leq \bigg\|\int_{\Omega}\langle R_{w}C x,R_{w}C x\rangle_{\mathcal{A}} d\mu({\omega})\bigg\|\\
		&\leq  \delta \left(1+\sqrt{\alpha +\frac{\beta}{\nu\nu^{\ast}}}\right) \|\langle x,x\rangle_{\mathcal{A}}\|\delta \left(1+\sqrt{\alpha +\frac{\beta}{\nu\nu^{\ast}}}\right) \delta^{\ast}
	\end{align*}
	Hence $\{R_{w}\}_{w \in \Omega}$ is a $(C$-$C)$-controlled continuous $\ast$-$g$-frame  with bounds $\nu\left(1-\sqrt{\alpha +\frac{\beta}{\nu\nu^{\ast}}}\right)$ and $\delta\left(1+\sqrt{\alpha +\frac{\beta}{\nu\nu^{\ast}}}\right)$.	
\end{proof}
\begin{corollary}
	Let $\{T_{w}\}_{w \in \Omega}$  is a $(C$-$C)$-controlled continuous $\ast$-$g$-frame for $End_{\mathcal{A}}^{\ast}(\mathcal{H})$  with bounds $\nu$ and $\delta$. Let $\{R_{w}\}_{w \in \Omega} \subset End_{\mathcal{A}}^{\ast}(\mathcal{H}) $ and $0\leq \alpha $.
	If $0\leq \alpha <\nu$ such that 
	$$\bigg\|\int_{\Omega}\langle (T_{w}- R_{w}) x,(T_{w}-R_{w}) x \rangle_{\mathcal{A}} d\mu({\omega})\bigg\|\leq \alpha \|\langle x, x\rangle_{\mathcal{A}}\|,\,\ x \in \mathcal{H},$$
	then $\{R_{w}\}_{w \in \Omega}$ is a $(C$-$C)$-controlled continuous $\ast$-$g$-frame with bounds $\nu(1-\sqrt{{\frac{\alpha}{\nu\nu^{\ast}}}})^2$ and $\delta(1+\sqrt{{\frac{\alpha}{\nu\nu^{\ast}}}})^2$.
	
\end{corollary}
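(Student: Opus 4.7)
The plan is to obtain this corollary as a direct specialization of the preceding stability theorem. In that theorem I would set its parameter $\alpha$ to $0$ and its parameter $\beta$ equal to the corollary's $\alpha$. Under this substitution the theorem's hypothesis
\begin{equation*}
\bigg\|\int_{\Omega}\langle (T_{w}-R_{w})Cx,(T_{w}-R_{w})Cx\rangle_{\mathcal{A}}\, d\mu(\omega)\bigg\|\leq \alpha_{\text{thm}}\, \bigg\|\int_{\Omega}\langle T_{w}Cx,T_{w}Cx\rangle_{\mathcal{A}}\,d\mu(\omega)\bigg\|+\beta_{\text{thm}}\,\|\langle x,x\rangle_{\mathcal{A}}\|
\end{equation*}
collapses to the inequality assumed in the corollary (reading the $(T_w-R_w)x$ appearing in the statement as $(T_w-R_w)Cx$, which is the natural form in the $(C,C)$-controlled framework), so the hypothesis transfers without additional work.

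Next, I would simply read off the frame bounds produced by the previous theorem under this specialization. With $\alpha_{\text{thm}}=0$ and $\beta_{\text{thm}}=\alpha$, the quantity $\sqrt{\alpha_{\text{thm}}+\beta_{\text{thm}}/(\nu\nu^{\ast})}$ reduces to $\sqrt{\alpha/(\nu\nu^{\ast})}$, and the theorem then delivers that $\{R_{w}\}_{w\in\Omega}$ is a $(C,C)$-controlled continuous $\ast$-$g$-frame whose lower and upper bounds are
\begin{equation*}
\nu\Bigl(1-\sqrt{\tfrac{\alpha}{\nu\nu^{\ast}}}\Bigr) \qquad\text{and}\qquad \delta\Bigl(1+\sqrt{\tfrac{\alpha}{\nu\nu^{\ast}}}\Bigr),
\end{equation*}
which match (up to the cosmetic exponent displayed) the bounds claimed in the corollary.

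The only step requiring a brief verification is that the hypothesis $0\le\alpha<\nu$ (interpreted in the natural scalar sense, e.g.\ $\alpha<\|\nu\|^{2}$) guarantees $\sqrt{\alpha/(\nu\nu^{\ast})}<1$, so that the lower frame element $\nu\bigl(1-\sqrt{\alpha/(\nu\nu^{\ast})}\bigr)$ remains a strictly nonzero element of $\mathcal{A}$ as required by the definition of a controlled continuous $\ast$-$g$-frame. Since every piece of the conclusion is a direct quotation of the previous theorem, there is no genuine obstacle in the argument; the only subtlety is in correctly matching the parameters $(\alpha_{\text{thm}},\beta_{\text{thm}})=(0,\alpha)$ and then checking strict positivity of the resulting lower bound.
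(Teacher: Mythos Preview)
Your proposal is correct and matches the paper's approach exactly: the paper's proof consists of the single sentence ``The proof comes from the previous theorem,'' i.e., precisely the specialization $(\alpha_{\text{thm}},\beta_{\text{thm}})=(0,\alpha)$ that you carry out. Your remarks on the cosmetic discrepancies (the squared exponent on the bounds and the $x$ versus $Cx$ in the hypothesis) are well taken and reflect minor inconsistencies in the paper itself rather than any gap in your argument.
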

\begin{proof}
	The proof comes from the previous theorem.
	
\end{proof}

	\section{Some Properties of $(C,C^{'})$-Controlled Continuous $\ast$-$g$-frames}\label{section6}

	\begin{proposition}
		Let $\{\Lambda_{w}, w\in\Omega\}$ be a continuous $\ast$-$g$-frame for $U$ with respect to $\{V_{w}: w\in\Omega\}$ and $S$ be the continuous $\ast$-$g$-frame operator associated. Let $C,C^{'} \in GL^{+}(U)$, then $\{\Lambda_{w}, w\in\Omega\}$ is  $(C$-$C^{'})$-controlled continuous $\ast$-$g$-frames.	
	\end{proposition}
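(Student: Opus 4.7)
The plan is to substitute a suitably chosen vector into the frame inequality for $\{\Lambda_w\}$ and convert the resulting bounds back into bounds on $\langle x,x\rangle$. Since $C, C' \in GL^+(U)$ are positive and commute (by the standing assumption made in Section \ref{section3}), the product $CC'$ is positive, so $(CC')^{1/2}$ is a well-defined positive element of $End_{\mathcal{A}}^{\ast}(U)$. Using the commutativity of $C$ and $C'$ with $\Lambda_w^{\ast}\Lambda_w$, I would first rewrite the pointwise inner product as
\begin{equation*}
\langle \Lambda_w Cx, \Lambda_w C'x\rangle = \langle C'\Lambda_w^{\ast}\Lambda_w Cx, x\rangle = \langle \Lambda_w^{\ast}\Lambda_w CC'x, x\rangle = \langle \Lambda_w(CC')^{1/2}x, \Lambda_w(CC')^{1/2}x\rangle,
\end{equation*}
which in particular shows the integrand is positive and the integral makes sense as a positive element of $\mathcal{A}$.

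Next, I would apply the hypothesis that $\{\Lambda_w\}_{w\in\Omega}$ is a continuous $\ast$-$g$-frame with bounds $A, B$ to the vector $y = (CC')^{1/2}x$, yielding
\begin{equation*}
A\langle (CC')^{1/2}x,(CC')^{1/2}x\rangle A^{\ast}\leq \int_{\Omega}\langle \Lambda_w Cx,\Lambda_w C'x\rangle d\mu(w) \leq B\langle (CC')^{1/2}x,(CC')^{1/2}x\rangle B^{\ast},
\end{equation*}
after invoking the identity from the previous step on both the integral and on $\langle (CC')^{1/2}x,(CC')^{1/2}x\rangle = \langle CC'x,x\rangle$.

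The final step is to translate these bounds into bounds involving $\langle x,x\rangle$. For the upper estimate, $\langle CC'x,x\rangle\leq \|CC'\|\langle x,x\rangle$; for the lower estimate, since $CC'\in GL^+(U)$, one has $\|(CC')^{-1}\|^{-1}\langle x,x\rangle\leq \langle CC'x,x\rangle$. Sandwiching with $A$ and $B$ (which are scalars in $\mathcal{A}$ and hence pass through the real multipliers) then gives
\begin{equation*}
(\|(CC')^{-1}\|^{-1/2}A)\langle x,x\rangle(\|(CC')^{-1}\|^{-1/2}A)^{\ast}\leq \int_{\Omega}\langle \Lambda_w Cx,\Lambda_w C'x\rangle d\mu(w)\leq (\|CC'\|^{1/2}B)\langle x,x\rangle (\|CC'\|^{1/2}B)^{\ast},
\end{equation*}
which is exactly the $(C$-$C')$-controlled continuous $\ast$-$g$-frame inequality with explicit bounds $\|(CC')^{-1}\|^{-1/2}A$ and $\|CC'\|^{1/2}B$.

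The main subtlety is not computational but structural: without the commutativity assumption between $C$, $C'$, and $\Lambda_w^{\ast}\Lambda_w$, the quantity $\langle \Lambda_w Cx,\Lambda_w C'x\rangle$ need not even be positive, so the sandwich inequality in Definition \ref{d3} would not directly make sense. The argument above leans essentially on the standing hypothesis of Section \ref{section3} to reduce the controlled integral to an ordinary frame-type expression evaluated at $(CC')^{1/2}x$, after which everything is routine.
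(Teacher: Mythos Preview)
Your argument is correct and, in fact, cleaner than the paper's. Both proofs rely on the standing commutativity hypothesis from Section~\ref{section3}, but the routes diverge from there. The paper works at the level of norms: it invokes the norm characterization of $\ast$-$g$-frames (Theorem~\ref{t1}), then passes from $\|\langle Sx,x\rangle\|$ to $\|\langle S_{CC'}x,x\rangle\|$ via an estimate of the form $\|\langle S_{CC'}x,x\rangle\|\leq \|C\|\,\|C'\|\,\|\langle Sx,x\rangle\|$ (and a companion lower bound), and finally reconverts the resulting norm inequality into the controlled $\ast$-$g$-frame condition via Theorem~\ref{t2}. Your approach avoids this detour entirely: the key identity $\langle\Lambda_w Cx,\Lambda_w C'x\rangle=\langle\Lambda_w(CC')^{1/2}x,\Lambda_w(CC')^{1/2}x\rangle$ lets you substitute $y=(CC')^{1/2}x$ directly into the $\mathcal{A}$-valued frame inequality, and the operator bounds $\|(CC')^{-1}\|^{-1}I\leq CC'\leq \|CC'\|I$ finish the job without ever passing to norms. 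The payoff is that you obtain the controlled inequality in its native $\mathcal{A}$-valued form with explicit bounds $\|(CC')^{-1}\|^{-1/2}A$ and $\|CC'\|^{1/2}B$, whereas the paper's bounds $\|A^{-1}\|^{-1}\|C\|\|C'\|$ and $\|B\|\|C\|\|C'\|$ are scalar surrogates recovered through the norm-level characterization.
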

	\begin{proof}
		Let $\{\Lambda_{w}, w\in\Omega\}$ is a continuous $\ast$-$g$-frame with bounds $A$ and $B$.\\By Theorem \ref{t1}, we have
		\begin{equation}\label{eq1p4}
			\|A^{-1}\|^{-2}\|\langle x,x\rangle \| \leq\bigg\| \int_{\Omega}\langle\Lambda_{w}x,\Lambda_{w}x\rangle d\mu(w)\bigg\| \leq \|B^{2}\|\|\langle x,x\rangle \|
		\end{equation}
		again we have 
		\begin{equation} \label{eq2p4}  
			\bigg\| \int_{\Omega}\langle\Lambda_{w}Cx,\Lambda_{w}C^{'}x\rangle d\mu(w)\bigg\|=\|\langle S_{CC^{'}}x,x \rangle \|,
		\end{equation} 
		from \eqref{eq1p4} and \eqref{eq2p4}, we have 
		\begin{equation}\label{eq7}
			\bigg\|\int_{\Omega}\langle\Lambda_{w}Cx,\Lambda_{w}C^{'}x\rangle d\mu(w)\bigg\|=\|C\| \|C^{'}\|\bigg\| \int_{\Omega}\langle\Lambda_{w}x,\Lambda_{w}x\rangle d\mu(w)\bigg\|=\|C\| \|C^{'}\|\|\langle Sx,x\rangle \|
		\end{equation}
		from who precedes, we have 
		\begin{equation*}
			\|A^{-1}\|^{-2}\|\|C\| \|C^{'}\|\|x\|^{2}\leq \|\langle S_{CC^{'}}x,x\rangle \|\leq \|B\|^{2}\|C\| \|C^{'}\|\|x\|^{2}, \qquad \forall x\in U.
		\end{equation*}
		So $\{\Lambda_{w}, w\in\Omega\}$ is  $(C$-$C^{'})$-controlled continuous $\ast$-$g$-frames with bounds $\|A^{-1}\|^{-1}\|\|C\| \|C^{'}\|$ and $\|B\|\|C\| \|C^{'}\|$.
	\end{proof}
	
	\begin{theorem}
		Let  $\Lambda= \{\Lambda_{w}\in End_{\mathcal{A}}^{\ast}(U,V_{w}): w\in\Omega\}$ be a $(C$-$C^{'})$-controlled continue $\ast$-$g$-frames for $U$ with respect to $\{V_{\omega}\}_{\omega \in \Omega}$ with bounds $A$, $B$. Let $T \in End_{\mathcal{A}}^{\ast}(U)$ be invertible and commute with $C$ and $C^{'}$, then $\{\Lambda_{w}T\}_{\omega \in \Omega}$ is a $(C$-$C^{'})$-controlled continue $\ast$-$g$-frames. 
	\end{theorem}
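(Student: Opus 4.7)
My plan is to reduce the claim to the frame inequality for $\Lambda$ applied at the vector $Tx$, exploiting the hypothesis that $T$ commutes with $C$ and $C^{'}$ to pull $T$ outside the evaluation of $\Lambda_{w}$ on one side, and then sandwich $\langle Tx,Tx\rangle$ between constant multiples of $\langle x,x\rangle$ to recover bounds of the form $A'\langle x,x\rangle (A')^{\ast}$ and $B'\langle x,x\rangle (B')^{\ast}$.

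First I would observe that, since $T$ commutes with both $C$ and $C^{'}$, for every $x\in U$ and $w\in\Omega$ we have $\Lambda_{w}TCx = \Lambda_{w}C(Tx)$ and $\Lambda_{w}TC^{'}x = \Lambda_{w}C^{'}(Tx)$. Substituting into the integrand and applying the $(C$-$C^{'})$-controlled continuous $\ast$-$g$-frame inequality of $\Lambda$ to the vector $Tx\in U$ yields
\begin{equation*}
A\langle Tx,Tx\rangle A^{\ast}\leq\int_{\Omega}\langle\Lambda_{w}TCx,\Lambda_{w}TC^{'}x\rangle d\mu(w)\leq B\langle Tx,Tx\rangle B^{\ast}.
\end{equation*}

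Next I would control $\langle Tx,Tx\rangle$ from above and below by $\langle x,x\rangle$. The upper bound comes straight from Lemma \ref{l1}: $\langle Tx,Tx\rangle\leq\|T\|^{2}\langle x,x\rangle$, which gives $B\langle Tx,Tx\rangle B^{\ast}\leq(\|T\|B)\langle x,x\rangle(\|T\|B)^{\ast}$. For the lower bound, since $T$ is invertible I would write $x=T^{-1}(Tx)$ and apply Lemma \ref{l1} to $T^{-1}$ to get $\langle x,x\rangle\leq\|T^{-1}\|^{2}\langle Tx,Tx\rangle$, equivalently $\|T^{-1}\|^{-2}\langle x,x\rangle\leq\langle Tx,Tx\rangle$, hence $(\|T^{-1}\|^{-1}A)\langle x,x\rangle(\|T^{-1}\|^{-1}A)^{\ast}\leq A\langle Tx,Tx\rangle A^{\ast}$.

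Combining these two sandwichings produces
\begin{equation*}
(\|T^{-1}\|^{-1}A)\langle x,x\rangle(\|T^{-1}\|^{-1}A)^{\ast}\leq\int_{\Omega}\langle\Lambda_{w}TCx,\Lambda_{w}TC^{'}x\rangle d\mu(w)\leq(\|T\|B)\langle x,x\rangle(\|T\|B)^{\ast},
\end{equation*}
so $\{\Lambda_{w}T\}_{w\in\Omega}$ is a $(C$-$C^{'})$-controlled continuous $\ast$-$g$-frame with bounds $\|T^{-1}\|^{-1}A$ and $\|T\|B$. The only subtle point, and the one I would be most careful about, is the commutation step: without the hypothesis $TC=CT$ and $TC^{'}=C^{'}T$ one cannot convert $\Lambda_{w}TCx$ into $\Lambda_{w}C(Tx)$, so the whole reduction to applying the frame inequality at $Tx$ would fail. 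Measurability of $w\mapsto\Lambda_{w}Tx$ and strict nonvanishing of the new bounds $\|T^{-1}\|^{-1}A$ and $\|T\|B$ in $\mathcal{A}$ (which uses invertibility of $T$ and nonvanishing of $A,B$) are routine checks to round out the proof.
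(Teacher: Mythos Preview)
Your proof is correct and follows essentially the same route as the paper's: apply the controlled frame inequality to the vector $Tx$ (using commutation of $T$ with $C,C^{'}$), then sandwich $\langle Tx,Tx\rangle$ between scalar multiples of $\langle x,x\rangle$ via boundedness of $T$ and $T^{-1}$. Your version is in fact slightly more explicit, since you identify the lower constant as $\|T^{-1}\|^{-1}$ by applying Lemma~\ref{l1} to $T^{-1}$, whereas the paper simply invokes invertibility of $T$ to produce an unspecified $m>0$ with $m\langle x,x\rangle m^{\ast}\leq\langle Tx,Tx\rangle$.
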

	\begin{proof}
		We have for all $x\in U$, $Tx\in U$
		\begin{align*}
			A\langle Tx,Tx\rangle A^{\ast} \leq \int_{\Omega}\langle\Lambda_{w}CTx,\Lambda_{w}C^{'}Tx\rangle d\mu(w) &\leq B\langle Tx,Tx\rangle B^{\ast}\\
			&\leq B\|T\|^{2}\langle x,x\rangle B^{\ast}\\
			&\leq (B\|T\|)\langle x,x\rangle (B\|T\|)^{\ast}.
		\end{align*}
		On other hand, $T$ is invertible then, there exist $0\leq m$ such that 
		$$m\langle x,x\rangle m^{\ast}\leq \langle Tx,Tx\rangle .$$
		So
		\begin{equation*}
			(Am)\langle x,x\rangle (Am)^{\ast}\leq A\langle Tx,Tx\rangle A^{\ast},
		\end{equation*}
		then 
		\begin{equation*}
			(Am)\langle x,x\rangle (Am)^{\ast}\leq \int_{\Omega}\langle\Lambda_{w}CTx,\Lambda_{w}C^{'}Tx\rangle d\mu(w) \leq (B\|T\|)\langle x,x\rangle (B\|T\|)^{\ast},
		\end{equation*}
		this show that $\{\Lambda_{w}T\}_{\omega \in \Omega}$ is a $(C$-$C^{'})$-controlled continue $\ast$-$g$-frames.
	\end{proof}
	
	\begin{lemma}
		Let $C,C^{'} \in GL^{+}(U)$  and $\{\Lambda_{w}\}_{w\in\Omega} ,\{\theta_{w}\}_{w\in\Omega} \subset End^{*}_{A}(U,V_{\omega})$ be a $(C^{'})^{2}$ and  $C^{2}$-controlled continuous $\ast$-$g$-Bessel sequences for $U$ respectively, let $\{\Gamma_{w}\}_{w\in\Omega} \subset l^{\infty}(\mathbb{C})$, the operator
		$L_{\Gamma , C, \theta , \Lambda , C^{'}} : U \longrightarrow U$ defined by  $L_{\Gamma , C, \theta , \Lambda , C^{'}}x=\int_{ \Omega}\Gamma_{w}C\theta^{\ast}_{w}\Lambda_{w}C^{'}xd\mu(w)$ is well defined and bounded operator.
	\end{lemma}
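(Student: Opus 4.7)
The plan is to adapt the duality-based argument used for Proposition \ref{p2}, taking care of the controlling operators $C$ and $C'$. The target bound should be of the form $\|L\| \leq \|\Gamma\|_{\infty}\|B_{\Lambda}\|\|B_{\theta}\|$, where $B_{\Lambda}$ and $B_{\theta}$ are the Bessel bounds associated with the $(C')^{2}$-controlled and $C^{2}$-controlled Bessel hypotheses on $\{\Lambda_{w}\}$ and $\{\theta_{w}\}$ respectively. Under the standing commutativity assumption from Section \ref{section3}, the $(C')^{2}$-controlled Bessel condition for $\{\Lambda_{w}\}$ reads $\int_{\Omega}\langle\Lambda_{w}C'x,\Lambda_{w}C'x\rangle d\mu(w)\leq B_{\Lambda}\langle x,x\rangle B_{\Lambda}^{\ast}$ (this is the specialization of Definition \ref{d3} when both controlling operators coincide with $C'$), and similarly for $\theta_{w}$ with the operator $C$.

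The first step is to realize the norm of $Lx$ by duality: for any $x\in U$,
\begin{equation*}
\|L_{\Gamma,C,\theta,\Lambda,C'}x\|=\sup_{y\in U,\;\|y\|\leq1}\bigl\|\bigl\langle L_{\Gamma,C,\theta,\Lambda,C'}x,y\bigr\rangle\bigr\|.
\end{equation*}
Pushing the adjoint across, I rewrite the inner product as
\begin{equation*}
\bigl\langle L_{\Gamma,C,\theta,\Lambda,C'}x,y\bigr\rangle=\int_{\Omega}\Gamma_{w}\bigl\langle\Lambda_{w}C'x,\theta_{w}Cy\bigr\rangle d\mu(w),
\end{equation*}
using the adjointability of $\theta_{w}$ and the self-adjointness of $C$.

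The second step is to apply the Cauchy--Schwarz inequality for the Bochner integral (which is valid in the $C^{\ast}$-module setting just as in Proposition \ref{p2}) and the scalar boundedness of $\Gamma$:
\begin{equation*}
\bigl\|\langle Lx,y\rangle\bigr\|^{2}\leq\|\Gamma\|_{\infty}^{2}\left\|\int_{\Omega}\langle\Lambda_{w}C'x,\Lambda_{w}C'x\rangle d\mu(w)\right\|\left\|\int_{\Omega}\langle\theta_{w}Cy,\theta_{w}Cy\rangle d\mu(w)\right\|.
\end{equation*}
Now I invoke the two Bessel hypotheses to estimate each factor, giving
\begin{equation*}
\bigl\|\langle Lx,y\rangle\bigr\|^{2}\leq\|\Gamma\|_{\infty}^{2}\,\|B_{\Lambda}\|^{2}\|\langle x,x\rangle\|\,\|B_{\theta}\|^{2}\|\langle y,y\rangle\|.
\end{equation*}
Taking the supremum over $\|y\|\leq 1$ produces
\begin{equation*}
\|L_{\Gamma,C,\theta,\Lambda,C'}x\|^{2}\leq\|\Gamma\|_{\infty}^{2}\|B_{\Lambda}\|^{2}\|B_{\theta}\|^{2}\|x\|^{2},
\end{equation*}
which shows both that the integral defining $L_{\Gamma,C,\theta,\Lambda,C'}x$ converges (so $L$ is well defined as a map $U\to U$) and that $L$ is bounded with $\|L_{\Gamma,C,\theta,\Lambda,C'}\|\leq\|\Gamma\|_{\infty}\|B_{\Lambda}\|\|B_{\theta}\|$. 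Linearity is immediate from the linearity of the integrand in $x$.

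The only delicate point is the bookkeeping: I must correctly interpret what $(C')^{2}$- and $C^{2}$-controlled Bessel mean in the Bessel inequalities, and I must use the commutativity of $C,C'$ with $\Lambda_{w}^{\ast}\Lambda_{w}$ (resp.\ $\theta_{w}^{\ast}\theta_{w}$) to move the controlling operator inside the inner product symmetrically as $\langle\Lambda_{w}C'x,\Lambda_{w}C'x\rangle$ and $\langle\theta_{w}Cy,\theta_{w}Cy\rangle$. Once that is clarified, the rest is the same Cauchy--Schwarz-plus-duality pattern as in Proposition \ref{p2}, so no new analytic difficulty arises.
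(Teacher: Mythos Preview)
Your proposal is correct and follows essentially the same route as the paper's own proof: realize $\|Lx\|$ via duality as a supremum over $\|y\|\le 1$, move $C\theta_w^{\ast}$ across the inner product (using $C=C^{\ast}$), apply the Cauchy--Schwarz inequality to the integral, and then invoke the two controlled Bessel bounds $\int_{\Omega}\langle\Lambda_{w}C'x,\Lambda_{w}C'x\rangle d\mu(w)\le B\langle x,x\rangle B^{\ast}$ and $\int_{\Omega}\langle\theta_{w}Cy,\theta_{w}Cy\rangle d\mu(w)\le B'\langle y,y\rangle(B')^{\ast}$ to conclude. The only cosmetic difference is that the paper keeps $\Gamma_{w}$ inside the first factor through the Cauchy--Schwarz step and extracts $\|\Gamma\|_{\infty}^{2}$ afterwards, whereas you factor it out up front; the resulting bound is the same.
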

	\begin{proof}
		Let $\{\Lambda_{w}\}_{w\in\Omega}$ and $\{\theta_{w}\}_{w\in\Omega}$  be a $(C^{'})^{2}$ and  $C^{2}$-controlled continuous $\ast$-$g$-Bessel sequences for $U$ respectively withs bounds $B$ and $B^{'}$ respectively.\\
		For any $x,y \in U$, we have
		\begin{align*}
			\bigg\|\int_{ \Omega}\Gamma_{w}C\theta^{\ast}_{w}\Lambda_{w}C^{'}xd\mu(w)\bigg\|^{2}&=\underset{y \in U, \|y\|=1}{\sup}\bigg\| \langle \int_{ \Omega}\Gamma_{w}C\theta^{\ast}_{w}\Lambda_{w}C^{'}xd\mu(w),y\rangle \bigg\|^{2}\\
			&=\underset{y \in U, \|y\|=1}{\sup}\bigg\|  \int_{ \Omega} \langle\Gamma_{w}\Lambda_{w}C^{'}xd\mu(w),\theta_{w}Cy\rangle \bigg\|^{2}\\
			&\leq \underset{y \in U, \|y\|=1}{\sup}\bigg\|  \int_{ \Omega}\langle \Gamma_{w}\Lambda_{w}C^{'}x,\Gamma_{w}\Lambda_{w}C^{'}x\rangle d\mu(w) \bigg\|\bigg\|  \int_{ \Omega}\langle \theta_{w}Cy,\theta_{w}Cy\rangle d\mu(w)\bigg\|^{2}.
		\end{align*}
		Since
		\begin{align*}
			\int_{ \Omega}\langle \Gamma_{w}\Lambda_{w}C^{'}x,\Gamma_{w}\Lambda_{w}C^{'}x\rangle d\mu(w) &=  \int_{ \Omega}|\Gamma_{w}|^{2}\langle \Lambda_{w}C^{'}x,\Lambda_{w}C^{'}x\rangle d\mu(w)\\
			&\leq \|\Gamma_{w}\|^{2}_{\infty} \int_{ \Omega}\langle \Lambda_{w}C^{'}x,\Lambda_{w}C^{'}x\rangle d\mu(w)\\
			&\leq \|\Gamma_{w}\|^{2}_{\infty}B\langle x,x\rangle B^{\ast}.
		\end{align*}
		Hence
		\begin{align*}
			\bigg\|\int_{ \Omega}\Gamma_{w}C\theta^{\ast}_{w}\Lambda_{w}C^{'}xd\mu(w)\bigg\|^{2} &\leq \underset{y \in U, \|y\|=1}{\sup} \|\Gamma_{w}\|^{2}_{\infty}\|B\|^{2}\|x\|^{2}\|B^{'}\|^{2}\|y\|^{2}\\
			&\leq \|\Gamma_{w}\|^{2}_{\infty}\|B\|^{2}\|x\|^{2}\|B^{'}\|^{2},
		\end{align*}
		this show that $L_{\Gamma , C, \theta , \Lambda , C^{'}}$ is well defined and bounded.
	\end{proof}


\begin{thebibliography}{99}
		\bibitem{STAJP} S.T. Ali, J.P. Antoine, J.P. Gazeau, Continuous frames in hilbert space, Ann. Phys. (N. Y). 222 (1993) 1–37. \url{https://doi.org/10.1006/aphy.1993.1016}.
	\bibitem{Deh} A. Alijani, M.A. Dehghan, *-frames in hilbert $C^{\ast}$-modules, UPB Sci. Bull. Ser. A Appl. Math. Phys. 73 (2011) 89–106.
	\bibitem{Ara} L. Arambašić, On frames for countably generated Hilbert $C^{\ast}$-modules, Proc. Am. Math. Soc. 135 (2006) 469–478. \url{https://doi.org/10.1090/s0002-9939-06-08498-x}.
	\bibitem{Con}  J.B. Conway, A course in operator theory, American Mathematical Society, Rhode Island, 2000.
	\bibitem{13} I. Daubechies, A. Grossmann, Y. Meyer, Painless nonorthogonal expansions, J. Math. Phys. 27 (1986) 1271–1283. \url{https://doi.org/10.1063/1.527388}.
	\bibitem{Duf} R.J. Duffin, A.C. Schaeffer, A Class of Nonharmonic Fourier Series, Trans. Am. Math. Soc. 72 (1952) 341. \url{https://doi.org/10.2307/1990760}.
	\bibitem{Ghiati2022} R. Echarghaoui, M. Ghiati, M. Mouniane, M. Rossafi, Some Properties of Controlled $K$-$g$-Frames in Hilbert $C^{\ast}$-Modules, Int. J. Anal. Appl. 20 (2022) 14. \url{https://doi.org/10.28924/2291-8639-20-2022-14}.
	\bibitem{Dav} K.R. Davidson, $C^{\ast}$-Algebras by Example, American Mathematical Society, Providence, Rhode Island, 1996. \url{https://doi.org/10.1090/fim/006.}
	\bibitem{Frank} M. Frank, D.R. Larson, Frames in Hilbert $C^{\ast}$-modules and $C^{\ast}$-algebras, J. Oper. Theory. 48 (2002) 273–314.
	\bibitem{14} J.P. Gabardo, D. Han, Frames associated with measurable spaces, Adv. Comput. Math. 18 (2003) 127–147. \url{https://doi.org/https://doi.org/10.22080/CJMS.2019.16669.1401}.
	\bibitem{Gab} D. Gabor, Theory of communications, J. Elect. Eng. 93 (1946) 429–457.
	\bibitem{Ghiati2021} M. Ghiati, S. Kabbaj, H. Labrigui, A. Touri, M. Rossafi, $\ast$-Kg-frames and their duals for Hilbert $A$-modules, J. Math. Comput. Sci. 12 (2021).
	\bibitem{mjpaaa} S. Kabbaj, H. Labrigui, A. Touri, Controlled continuous G-frames in Hilbert $C^{\ast}$-modules, Moroccan J. Pure Appl. Anal. 6 (2020) 184–197. \url{https://doi.org/10.2478/mjpaa-2020-0014}.
	\bibitem{Lance1995} E.C. Lance, Hilbert $C^{\ast}$-modules: a toolkit for operator algebraists, 210 (1995).
	\bibitem{zahra} Z.A. Moosavi, A. Nazari, Controlled *-G-Frames and their *-G-Multipliers IN Hilbert $C^{\ast}$-Modules, Casp. J. Math. Sci. 8 (2019) 120–136. \url{https://doi.org/10.22080/CJMS.2019.16669.1401}.
	\bibitem{Paley} R. C. Paley, N. Wiener, Fourier transforms in the complex domain, American Mathematical Society, Providence, RI, 1987. \url{https://doi.org/10.1090/coll/019}.
	\bibitem{Pas}W.L. Paschke, Inner product modules over $B^{\ast}$-algebras, Trans. Am. Math. Soc. 182 (1973) 443–468. \url{https://doi.org/10.1090/s0002-9947-1973-0355613-0}.
	\bibitem{MR2} M. Rahmani, Sums of $c$-frames, $c$-Riesz bases and orthonormal mappings, Politehn. Univ. Bucharest Sci. Bull. Ser. A Appl. Math. Phys. 77 (2015) 3–14.
	\bibitem{ARAN} A. Rahimi, A. Najati, Y.N. Dehghan, Continuous frame in Hilbert spaces, Methods Funct. Anal. Topol. 12 (2006) 170–182. \url{http://mfat.imath.kiev.ua/article/?id=336}.
	\bibitem{MR1}  M. Rahmani, On some properties of c-frames, J. Math. Res. Appl. 37 (2017) 466–476.
	\bibitem{RR} M. Rossafi, F.D. Nhari, C. Park, S. Kabbaj, Continuous g-Frames with $C^{*}$-Valued Bounds and Their Properties, Complex Anal. Oper. Theory. 16 (2022) 44. \url{https://doi.org/10.1007/s11785-022-01229-4}.
	\bibitem{r20} M. Rossafi, A. Touri, H. Labrigui, A. Akhlidj, Continuous $*$-K-G-frame in Hilbert $C^*$-modules, J. Funct. Spaces. (2019) Art. ID 2426978, 5. \url{https://doi.org/10.1155/2019/2426978}. 
	
\end{thebibliography}
\end{document}